\theoremstyle{plain}
\newtheorem{theorem}{Theorem}[section]
\newtheorem*{ThmA}{Theorem A}
\newtheorem*{ThmB}{Theorem B}
\newtheorem*{ThmD}{Theorem D}
\newtheorem*{CorC}{Corollary C}
\newtheorem{corollary}[theorem]{Corollary}
\newtheorem{lemma}[theorem]{Lemma}
\newtheorem{proposition}[theorem]{Proposition}
\newtheorem*{conjecture}{Conjecture} 
\theoremstyle{definition}
\newtheorem{definition}[theorem]{Definition}
\newtheorem{remark}[theorem]{Remark}
\newtheorem{question}[theorem]{Question}
\newcommand{\Z}{\ensuremath{{\mathbb{Z}}}}
\newcommand{\G}{\Gamma}
\newcommand{\Ge}{\Omega}
\begin{document}

	\title{Acylindrical hyperbolicity for Artin groups with a visual splitting}
		\author{Ruth Charney, Alexandre Martin, and Rose Morris-Wright}
		\date{}

	\maketitle
	
	\begin{abstract}
	We establish a criterion that implies the acylindrical hyperbolicity of many Artin groups admitting a visual splitting. This gives a variety of new examples of acylindrically hyperbolic Artin groups, including many Artin groups of FC-type.

	Our approach  relies on understanding when parabolic subgroups are weakly malnormal in a given Artin group. We formulate a conjecture for when this happens, and prove it for several classes of Artin groups, including all spherical-type, all two-dimensional, and all even FC-type Artin groups. In addition, we establish some connections between several conjectures about Artin groups, related to questions of acylindrical hyperbolicity, weak malnormality of parabolic subgroups, and intersections of parabolic subgroups. 
	\end{abstract}

\section{Introduction}

\paragraph{Background and motivation.} Artin groups are generalizations of braid groups, with many connections to Coxeter groups.  
Artin groups remain largely mysterious in general, both from an algebraic and geometric viewpoint, although significant progress has been made in studying specific classes: spherical-type, FC-type, and 2-dimensional Artin groups, etc. (see \Cref{Background} for the definition of each class.) For Artin groups outside of  these classes, it remains unknown in general whether they have solvable word problem, contain torsion elements, or have non-trivial centers.  Geometrically, while no Artin groups other than free groups are hyperbolic (as they otherwise contain $\Z^2$-subgroups), it is expected that they are all CAT(0), although this is still open even for braid groups. Some classes of Artin groups have been shown to satisfy other notions of non-positive curvature, see for instance \cite{HaettelXXL,HagenMartinSistoHHG, HuangOsajdaSystolic,HuangOsajdaHelly}

Acylindrical hyperbolicity is a notion encapsulating the idea of a group `having hyperbolic directions', a very weak form of hyperbolic behaviour. Many groups of geometric interest are known to be acylindrically hyperbolic, and despite its generality, this notion  is strong enough to have important consequences for the structure of the group.  (We refer the reader to \cite{Osin2016, Osin2017} for a discussion of the consequences.)

In the case of Artin groups, there is a clear conjectural picture of when they are expected to be acylindrically hyperbolic: 
\begin{conjecture}[Acylindrical Hyperbolicity Conjecture]
	Let $A_\G$ be an irreducible Artin group. Then the central quotient $A_G / Z(A_\G)$ is acylindrically hyperbolic. 
\end{conjecture}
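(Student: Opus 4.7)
The plan is to combine two ingredients: visual splittings of the defining graph $\Gamma$, which express $A_\Gamma$ as an amalgamated product of Artin subgroups over a parabolic edge group, and weak malnormality of that edge subgroup, which promotes the resulting action on the Bass--Serre tree to a non-elementary acylindrical one. A Minasyan--Osin style criterion then yields acylindrical hyperbolicity of $A_\Gamma$, whence of its central quotient $A_\Gamma / Z(A_\Gamma)$.

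First I would separate into cases according to whether $\Gamma$ admits a non-trivial visual splitting $\Gamma = \Gamma_1 \cup_{\Gamma_0} \Gamma_2$ with $\Gamma_0$ a proper subgraph. When such a splitting exists, $A_\Gamma = A_{\Gamma_1} *_{A_{\Gamma_0}} A_{\Gamma_2}$; I would check that the splitting is proper (so neither vertex group equals the edge group) and that $A_{\Gamma_0}$ is weakly malnormal in $A_\Gamma$. Irreducibility of $\Gamma$ is used to ensure that the induced action on the Bass--Serre tree is non-elementary. In the complementary case, where $\Gamma$ admits no non-trivial visual splitting --- most notably when $\Gamma$ is of spherical type --- I would invoke the known acylindrical hyperbolicity of central quotients of irreducible spherical-type Artin groups (via Calvez--Wiest's additional length function) and analogous Deligne-complex arguments in the 2-dimensional and FC-type settings, using these as base cases while inductively applying the splitting argument for larger $|V(\Gamma)|$.

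The main obstacle is unquestionably the weak malnormality step, which the abstract frames as a separate conjecture and establishes only for spherical-type, 2-dimensional, and even FC-type Artin groups. I would attack it by first showing that intersections $A_{\Gamma_0} \cap g A_{\Gamma_0} g^{-1}$ are themselves parabolic subgroups (the intersection-of-parabolics conjecture alluded to in the abstract), and then using geometric features of $\Gamma$ --- such as the absence of common stars or links between cosets of $\Gamma_0$ --- to force these intersections to have infinite index in $A_{\Gamma_0}$, or to become trivial after passing to the central quotient. A secondary difficulty is that the vertex groups $A_{\Gamma_i}$ of a visual splitting need not be irreducible, so the induction must carefully strip off spherical-type factors and their centers at each stage. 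Packaged together with the paper's criterion, a full resolution of the weak malnormality conjecture would reduce the Acylindrical Hyperbolicity Conjecture to the irreducible spherical-type case already handled in the literature.
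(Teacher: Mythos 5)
The statement you are proving is the Acylindrical Hyperbolicity Conjecture itself, which the paper explicitly leaves open: it is stated as a conjecture, and the paper only establishes it for particular classes (Theorem A and its corollaries). Your proposal is therefore best read as a proof \emph{program}, and as such it does track the paper's actual strategy --- visual splittings, the Minasyan--Osin criterion, and weak malnormality of the edge group --- but it does not close the gaps that keep the statement conjectural. Two of these gaps are worth naming precisely. First, the case where $\Gamma$ admits no visual splitting is exactly the case where $\Gamma$ is a clique, and a clique need not be of spherical type; for infinite-type Artin groups on a complete graph neither the splitting machinery nor Calvez--Wiest applies, and your ``base case'' does not cover them. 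Second, even granting that $A_{\Gamma_0} \cap gA_{\Gamma_0}g^{-1}$ is always parabolic, you still need to produce a conjugate with \emph{finite} intersection; your suggestion of forcing the intersection to have ``infinite index'' or to ``become trivial after passing to the central quotient'' is not the right target (weak malnormality demands a finite intersection with some conjugate, and infinite-index subgroups can still be infinite). The paper's route from ``intersections are parabolic'' to ``some intersection is trivial'' is the technical core of Theorem A: one shows that an irreducible Artin group has no proper nontrivial normal standard parabolic subgroup (via a computation in the Artin monoid), uses this to strictly shrink the pointwise stabiliser of a growing geodesic in the Bass--Serre tree, and terminates using the bounded-chain condition on parabolic subgroups from Blufstein--Paris. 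Your proposal does not supply a substitute for this descent argument.

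A smaller but real issue is your inductive framing: you propose to ``inductively strip off spherical-type factors and their centers at each stage,'' but the paper's argument is not an induction on vertex groups of the splitting --- the vertex groups play no role beyond containing the edge group, and the descent happens entirely inside the Bass--Serre tree of a single splitting. If you do want an induction on $|V(\Gamma)|$, you must confront the fact that the hypotheses you need (irreducibility, the Intersection Conjecture, weak malnormality) are not known to pass to the vertex groups $A_{\Gamma_i}$ in any useful way, which is precisely why the paper instead imposes hypotheses on the edge group alone.
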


This conjecture essentially states that Artin groups are expected to be acylindrically hyperbolic unless they ``clearly cannot be''. Indeed, reducible Artin groups cannot be acylidrically hyperbolic as they split as direct products of infinite groups. Spherical-type Artin groups cannot be acylindrically hyperbolic as they have an infinite cyclic centre (but the acylindrical hyperbolicity of their central quotient was proved by Calvez and Wiest in \citep{CalvezWiest2017Acylindrical}.) 
Note that Artin groups of non-spherical type are conjectured to have a trivial centre, so in that case the conjecture states that an Artin group  of non-spherical type is  acylindrically hyperbolic if and only if it is irreducible. This conjecture has been proved for several families of Artin groups already, see \Cref{Background} for details.

Beside being interesting in its own right, the question of acylindrical hyperbolicity for Artin groups has applications to some well-known open problems for these groups. A first possible application is to the centre of these groups, as Artin groups of infinite type are conjectured to have a trivial centre. Since it is known that acylindrically hyperbolic groups have a finite centre, showing this property is a possible first step towards proving the triviality of the centre. Another possible application comes from the isomorphism problem, which asks which labelled graphs produce isomorphic Artin groups. Very little is currently known about the Isomorphism Problem for Artin groups, see for instance \cite{Droms1987,Paris2004,Crisp2005, Vaskou2023isomorphism,MartinVaskou2024}. For instance, it is not even known whether being a spherical-type Artin group, or being an irreducible Artin group, is invariant under isomorphism. Since acylindrically hyperbolic groups have finite centres and do not split as direct products of infinite factors, a positive answer to the Acylindrical Hyperbolicity Conjecture would imply that both aforementioned properties are indeed invariant under isomorphism.

A particular family of Artin groups all of whose elements are expected to be acylindrically hyperbolic is the family of Artin groups whose presentation is not a complete graph. Such Artin groups have the useful feature of decomposing as amalgamated products of standard parabolic subgroups (over a standard parabolic subgroup). Such splittings, which we will refer to as \textit{visual splittings} as they can be read directly from the presentation graph, have been used to derive properties of the Artin group from the properties of the corresponding parabolic subgroups, see for instance \cite{CharneyDavis1995,GodelleParis2012Basic,MorrisWright2019,moller_parabolic_2023}. 
For groups splitting as amalgamated products, and more generally for groups acting on trees, there is a useful acylindrical hyperbolicity criterion due to Minsayan--Osin \cite{minasyan_acylindrical_2015}. In a nutshell (see \Cref{thm: M-O} for the precise statement), a non-virtually cyclic group $G$ splitting as an amalgamated product $G = A*_C B$ is acylindrically hyperbolic as soon as the edge group $C$ is \textit{weakly malnormal} in $G$, i.e. as soon as $C$ intersects one of its conjugates along a finite subgroup. In the case of Artin groups admitting a visual splitting, this amounts to understanding when standard parabolic subgroups are weakly malnormal in the ambient group. We introduce the following conjecture, which provides a complete description of when this is expected to happen: 

\begin{conjecture}[Weak Malnormality Conjecture]
	A proper standard parabolic subgroup of $A_\G$ is weakly malnormal if and only if it does not contain a standard parabolic subgroup that is a direct factor of~$A_\G$.
	
	In particular, if $A_\G$ is irreducible, then every proper standard parabolic subgroup is weakly malnormal.
\end{conjecture}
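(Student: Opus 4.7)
The reverse direction is immediate: if $A_S \subseteq A_T$ is a parabolic direct factor of $A_\G$, say $A_\G = A_S \times A_{S'}$ with $S' = V \setminus S$ non-empty, then $A_S$ is normal in $A_\G$, so $A_S \subseteq g A_T g^{-1}$ for every $g \in A_\G$; hence $A_T \cap g A_T g^{-1} \supseteq A_S$ is infinite for all $g$. The ``in particular'' clause follows at once, because an irreducible $A_\G$ admits no direct-factor decomposition $A_\G = A_S \times A_{S'}$ with $S, S' \neq \emptyset$, so the obstruction is vacuous.

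For the forward direction, assume no parabolic subgroup contained in $A_T$ is a direct factor of $A_\G$, and seek $g \in A_\G$ with $A_T \cap g A_T g^{-1}$ finite. The plan is to combine two ingredients: (i) the statement that the intersection of two parabolic subgroups of $A_\G$ is again parabolic (a conjecture known for spherical-type, two-dimensional, and even FC-type Artin groups), and (ii) a careful choice of conjugator $g$ built from generators outside $T$. Granted (i), $A_T \cap g A_T g^{-1}$ is a parabolic subgroup of $A_T$; since non-trivial parabolic subgroups of an Artin group are infinite, the task reduces to producing $g$ for which this intersection is trivial.

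The no-direct-factor hypothesis gives the following combinatorial leverage: $T$ is not disconnected from $V \setminus T$ in $\G$ (else $A_T$ itself would be a parabolic direct factor of $A_\G$ contained in $A_T$), so there exists $v \in V \setminus T$ joined to some vertex of $T$ by an edge with label $\geq 3$. In the cases where the expected description holds, one obtains $A_T \cap v A_T v^{-1} = A_{T'}$ with $T' := \{t \in T : m_{vt} = 2\}$, a proper subset of $T$. One then iterates the conjugation using carefully chosen generators outside the current support; at each step the no-direct-factor hypothesis, applied to the current parabolic $A_{T'} \subseteq A_T$, guarantees a vertex connected to $T'$ by a non-commuting edge, allowing the support to strictly shrink until it is empty, at which point the cumulative conjugator $g$ witnesses weak malnormality.

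The principal obstacle is proving the expected description of $A_T \cap v A_T v^{-1}$ as a \emph{specific} standard parabolic of $A_T$ (not merely some parabolic). For right-angled Artin groups this follows from CAT(0) cube-complex arguments, and for spherical-type or two-dimensional Artin groups from fixed-point analysis on the Deligne complex; extending it uniformly would likely demand a good geometric model (e.g.\ a CAT(0) or Helly model of $A_\G$) in which the fixed-point sets of $A_T$ and of its conjugate $v A_T v^{-1}$ can be shown to meet exactly in the fixed-point set of $A_{T'}$. A complementary route is an induction on the size of $\G$: use visual splittings of $A_\G$ to transfer weak malnormality questions to strictly smaller Artin factors, where the conjecture holds by the inductive hypothesis, and then recombine the local information via Bass--Serre theory on the splitting tree, on which standard parabolics act elliptically.
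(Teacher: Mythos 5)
The statement you are proving is labelled a \emph{conjecture} in the paper, and the paper does not prove it in general: it only establishes it for special classes (spherical-type, two-dimensional, and groups satisfying the hypothesis of Theorem~A), and proves the implication ``Intersection Conjecture $\Rightarrow$ Weak Malnormality Conjecture'' for irreducible, non-clique $\G$. Your reverse direction is correct and matches the paper's implicit argument: a standard parabolic direct factor is normal, hence contained in every conjugate of any standard parabolic containing it, and non-trivial parabolics are infinite. Your forward direction, however, is a strategy sketch with genuine gaps, which is unavoidable since the statement is open; but some of the gaps are avoidable and worth naming. First, your combinatorial claim is wrong as stated: if $A_T$ contains no direct factor, it does \emph{not} follow that some $v \notin T$ is joined to $T$ by an edge labelled $\geq 3$ (take $A_\G$ a free group and $T$ a single vertex; there are no edges at all, yet $\langle s\rangle$ is weakly malnormal). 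The correct formulation, used in the paper's Lemma~\ref{Lem: no proper normal subgroups}, passes to the Dynkin diagram, where absent edges of $\G$ count as edges labelled $\infty$. Second, and more seriously, even granting your explicit description $A_T \cap vA_Tv^{-1} = A_{T'}$ at each step, iterating conjugations produces a small \emph{multi-fold} intersection $A_T \cap A_T^{g_1} \cap A_T^{g_1g_2} \cap \cdots$, whereas weak malnormality requires a single conjugate $g$ with $A_T \cap A_T^{g}$ finite. You assert without justification that the ``cumulative conjugator'' works; nothing in your setup forces the multi-fold intersection to coincide with a two-fold one.

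The paper's machinery is designed precisely to close these two gaps without needing the explicit computation of $A_T \cap vA_Tv^{-1}$ that you identify as the principal obstacle. In the proof of Theorem~\ref{thm: ah splitting IP edge}, one works in the Bass--Serre tree of a visual splitting: there the pointwise stabiliser of a geodesic segment equals the intersection of the stabilisers of just its first and last edges, which resolves the multi-fold versus two-fold issue automatically. The strict shrinking at each step is obtained not from an explicit formula but from the characterisation of normal standard parabolics (Corollary~\ref{cor: normal iff product}) combined with the bound on chains of parabolic subgroups (Lemma~\ref{lem: bounded chain parabolics}); all that is needed of the intersections is that they are parabolic, which is exactly the hypothesis of Theorem~A. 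Weak malnormality of the edge group is then propagated to all proper parabolics via Proposition~\ref{prop: from wm edge group to every wm parabolic}. If you want to salvage your approach, replace the explicit description of $A_T \cap vA_Tv^{-1}$ by ``some proper parabolic of $A_T$'' (which follows from non-normality plus the Intersection Conjecture) and carry out the iteration inside the Bass--Serre tree of a visual splitting rather than by multiplying conjugators.
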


In trying to apply the criterion of Minsayan--Osin, we are thus led to study the intersections of standard parabolic subgroups. Such intersections have been heavily studied in recent years in connection with other problems about Artin groups. In particular, the following conjecture is particularly relevant: 

\begin{conjecture}[Intersection Conjecture]
	The intersection of any two parabolic subgroups in $A_\G$ is again a parabolic subgroup. 
\end{conjecture}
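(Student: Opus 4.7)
The plan is to reduce the Intersection Conjecture to a more tractable form and then attack it by induction using visual splittings, with the main obstruction concentrated in the case where no such splitting exists.

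First, since every parabolic subgroup is a conjugate of a standard one and conjugation commutes with intersection, it suffices to establish the following: for any standard parabolic subgroups $A_S, A_T \leq A_\G$ and any $g \in A_\G$, the intersection $A_S \cap g A_T g^{-1}$ is again a parabolic subgroup of $A_\G$. This reduction is routine and lets us work entirely with standard parabolics on one side.

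I would then proceed by induction on $|V(\G)|$. When $\G$ is not a complete graph, it admits a visual splitting $A_\G = A_{\G_1} *_{A_{\G_0}} A_{\G_2}$ with each $\G_i$ a proper subgraph, and one has the associated Bass--Serre tree $T$. Given $A_S$ and $gA_Tg^{-1}$, I would analyze their intersection via their action on $T$. If both are elliptic in $T$, they lie in conjugates of the vertex groups $A_{\G_i}$, which are themselves parabolic subgroups of strictly smaller rank, and the induction hypothesis applied inside those vertex groups delivers the claim (using that a parabolic of a standard parabolic is a parabolic of $A_\G$). If one of the groups is hyperbolic on $T$, one must study the minimal invariant subtrees and argue that the intersection is elliptic with fixed set meeting an edge, reducing again to an intersection inside an edge stabilizer $hA_{\G_0}h^{-1}$; here one uses that $A_{\G_0}$ is a parabolic of each factor, so induction still applies.

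The hard part is the base case of a complete presentation graph, where no visual splitting is available and the induction cannot proceed. For spherical-type $A_\G$ one can quote the Garside-theoretic theorem of Cumplido--Gebhardt--Gonz\'alez-Meneses--Wiest. Beyond spherical type, one would want a uniform geometric input: an action of $A_\G$ on a suitable nonpositively curved complex (CAT(0), Helly, or systolic) in which standard parabolic subgroups are realized as stabilizers of convex subcomplexes, so that intersections of parabolics correspond to stabilizers of intersections of convex subcomplexes and are therefore parabolic. Producing such a model for an arbitrary Artin group is precisely one of the central open problems in the theory, and this is where the Intersection Conjecture is expected to be genuinely out of reach without new ideas. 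A reasonable intermediate target, compatible with the rest of the present paper, is to first settle the conjecture in the cases where suitable geometric models are already available (spherical, two-dimensional, even FC-type), and then combine these with the visual splitting induction to handle all Artin groups built from these pieces via non-complete graphs.
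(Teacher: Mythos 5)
This statement is labelled a \emph{Conjecture} in the paper, and the paper offers no proof of it: it is explicitly described as open in general, with only partial results known for special families. Your proposal is therefore not being measured against a proof in the paper, and, as written, it is not a proof either --- you concede this yourself when you reach the base case. The decisive gap is exactly where you locate it: when $\G$ is a complete graph of non-spherical type there is no visual splitting, no known nonpositively curved model realising all parabolics as stabilisers of convex subcomplexes, and nothing to quote. Everything before that point is a reduction strategy, not an argument, so the proposal establishes at most ``the conjecture holds for Artin groups built by visual splittings from pieces where it is already known'' --- which is close in spirit to what the paper actually proves in its Proposition~\ref{lem: IP via Deligne} and Theorem~\ref{thm: AH with IP for both vertex groups}, but those results are deliberately stated as conditional or partial, not as a proof of the conjecture.

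The inductive step also has unaddressed problems even granting the base case. A standard parabolic $A_S$ need not be elliptic on the Bass--Serre tree $T$ of a visual splitting: if $S$ meets both $\G_1\setminus\Ge$ and $\G_2\setminus\Ge$, then $A_S$ itself splits as an amalgam and acts on $T$ with an unbounded minimal subtree, so your ``both elliptic'' case does not cover arbitrary pairs of parabolics, and the ``hyperbolic'' case is only gestured at. Moreover, in the elliptic--elliptic case with fixed vertices $v_1\neq v_2$, the intersection sits inside the pointwise stabiliser of the geodesic from $v_1$ to $v_2$, and controlling it requires an iterated intersection through a chain of edge stabilisers (conjugates of $A_\Ge$), each step invoking the Intersection Conjecture inside a vertex group together with the Blufstein--Paris fact that a parabolic contained in a parabolic is a parabolic of the latter; this is precisely the ``Claim'' machinery of Proposition~\ref{lem: IP via Deligne}, and it needs to be spelled out rather than asserted. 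In short: the statement is open, your sketch correctly identifies why, and the salvageable portion of your plan is the conditional reduction that the paper already carries out.
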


This conjecture has been proved for a few families of Artin groups but remains open in general. For some families of Artin groups, certain weaker versions have been established, see \Cref{Background} for more details. 

\medskip

In this paper, we study the connections between these three conjectures, and show the acylindrical hyperbolicity of new classes of Artin groups.

\paragraph{Statement of results.} We now state the main results of this article. Our main theorem is a criterion for showing the acylindrical hyperbolicity of an Artin group admitting a visual splitting, under a mild assumption on the amalgamating subgroup:

\begin{ThmA}
		Let $A_\Gamma$ be an irreducible Artin group that splits visually as an amalgamated product $A_\Gamma = A_{\Gamma_1} *_{A_{\Ge}} A_{\Gamma_2}$. If the intersection of any two conjugates of $A_{\Ge}$ is again a parabolic subgroup of $A_{\Gamma}$, then $A_{\Ge}$ is weakly malnormal in $A_\Gamma$. In particular, $A_\Gamma$ is acylindrically hyperbolic. 
	\end{ThmA}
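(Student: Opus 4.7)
My plan is to apply the Minasyan--Osin acylindrical hyperbolicity criterion (\Cref{thm: M-O}) to the action of $A_\Gamma$ on the Bass--Serre tree $T$ of the visual splitting $A_\Gamma = A_{\Gamma_1} *_{A_\Omega} A_{\Gamma_2}$. Since $A_\Gamma$ is irreducible, both $V(\Gamma_1)\setminus V(\Omega)$ and $V(\Gamma_2)\setminus V(\Omega)$ are non-empty; choosing $s$ and $t$ in these two sets respectively, the product $st$ has length $2$ in the amalgamated product and acts hyperbolically on $T$, so $A_\Gamma$ is neither finite nor virtually cyclic. Consequently, the proof of acylindrical hyperbolicity reduces to establishing that $A_\Omega$ is weakly malnormal in $A_\Gamma$, i.e.\ that there exists $g \in A_\Gamma$ with $A_\Omega \cap g A_\Omega g^{-1}$ finite.

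Next I would translate ``finite'' into ``trivial''. By the hypothesis, each intersection $A_\Omega \cap g A_\Omega g^{-1}$ is a parabolic subgroup of $A_\Gamma$. Any non-trivial parabolic subgroup is itself a non-trivial Artin group, hence contains a standard generator of infinite order and is infinite. Therefore weak malnormality is equivalent to finding some $g$ with $A_\Omega \cap g A_\Omega g^{-1} = \{1\}$.

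To produce such a $g$ I would argue by contradiction and assume that $P_g := A_\Omega \cap g A_\Omega g^{-1}$ is a non-trivial parabolic subgroup of $A_\Gamma$ (contained in $A_\Omega$) for every $g \in A_\Gamma$. Among all such $P_g$, choose one whose rank (i.e.\ the cardinality of the defining set of its standard parabolic conjugate) is minimal, and call it $P$. The strategy is then to use the parabolic intersection hypothesis together with minimality of $P$ to show that $P$ is invariant, up to conjugation, under a large enough family of elements of $A_\Gamma$ coming from both sides of the visual splitting. Combining this invariance with known results on normalizers of parabolic subgroups of Artin groups should force the standard generators of $P$ to commute with every vertex of $\Gamma$ outside the support of $P$, producing a commuting decomposition of $\Gamma$ that contradicts irreducibility of $A_\Gamma$.

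The main obstacle I anticipate is exactly this final rigidity step. The hypothesis only controls pairwise intersections of conjugates of $A_\Omega$, not arbitrary intersections of parabolic subgroups, so the argument must stay carefully within this restricted setting while still extracting enough invariance of the minimal parabolic $P$ to conclude. Making precise how ``$P$ appears in many conjugate intersections'' upgrades to ``$P$ is centralised by generators on both sides of $\Omega$'', and then to a visual direct-product splitting of $\Gamma$, is where the main technical work of the proof will lie.
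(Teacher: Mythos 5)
Your setup is sound and matches the paper's: work in the Bass--Serre tree $T$ of the splitting, invoke \Cref{thm: M-O}, and observe that a non-trivial parabolic subgroup is infinite, so weak malnormality amounts to finding $g$ with $A_\Omega \cap gA_\Omega g^{-1} = \{1\}$. But the heart of the proof --- actually producing such a $g$ --- is left as a plan, and the route you sketch runs into exactly the obstruction you yourself name. If $P = A_\Omega \cap (A_\Omega)^g$ is a minimal-rank non-trivial intersection, any attempt to shrink or propagate it by further conjugation produces expressions such as $A_\Omega \cap (A_\Omega)^{g} \cap (A_\Omega)^{h}$, a \emph{three-fold} intersection of conjugates, which the hypothesis does not control; and the passage from ``$P$ is invariant under many conjugations'' to a direct-product decomposition of $\Gamma$ is asserted via unspecified ``known results on normalizers'' rather than proved. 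So there is a genuine gap at the decisive step.

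The paper closes this gap with two ideas your outline is missing. First, tree geometry: the pointwise stabilizer of \emph{any} geodesic segment of $T$ equals the intersection of the stabilizers of its first and last edges, hence is always an intersection of just \emph{two} conjugates of $A_\Omega$ and is therefore parabolic by hypothesis --- this is what lets the argument iterate without ever leaving the scope of the assumption. One then extends a geodesic step by step so that its pointwise stabilizer strictly decreases, and \Cref{lem: bounded chain parabolics} (bounded chains of parabolic subgroups) forces triviality after finitely many steps. Second, the rigidity input is not a normalizer computation but \Cref{Lem: no proper normal subgroups} and \Cref{cor: normal iff product}: a standard parabolic subgroup is normal if and only if it is a product of direct factors, proved by an elementary Artin-monoid length argument; combined with irreducibility this shows that no non-trivial parabolic subgroup of an edge stabilizer can be normal in both adjacent vertex stabilizers, which is exactly what allows the geodesic to be extended when the naive conjugation step fails. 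Without these two ingredients (or substitutes for them) your proposal does not yet constitute a proof.
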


As a consequence of this result, we also obtain the following result showing the connection between the three conjectures at the centre of this article: 

\begin{ThmB}
	Suppose $A_\G $ is irreducible and $\G$ is not a clique. 
	\begin{itemize}
		\item If $A_\G$ satisfies the Intersection Conjecture, then it also satisfies the Weak Malnormality Conjecture. 
		\item If $A_\G$ satisfies the Weak Malnormality Conjecture, then it also satisfies the Acylindrical Hyperbolicity Conjecture.  
		\end{itemize}
	\end{ThmB}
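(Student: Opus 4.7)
The plan is to address the two implications separately. The easier one (Weak Malnormality $\Rightarrow$ Acylindrical Hyperbolicity) is a direct application of the Minasyan--Osin criterion; the harder one (Intersection $\Rightarrow$ Weak Malnormality) leverages Theorem A.

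For the second implication, since $\G$ is not a clique, there exist two non-adjacent vertices $u, v \in \G$, producing a visual splitting $A_\G = A_{\G \setminus \{u\}} *_{A_{\G \setminus \{u,v\}}} A_{\G \setminus \{v\}}$. The amalgamating subgroup $A_{\G \setminus \{u,v\}}$ is a proper standard parabolic, and irreducibility of $A_\G$ guarantees it contains no standard parabolic direct factor; thus the Weak Malnormality Conjecture applies and gives that $A_{\G \setminus \{u,v\}}$ is weakly malnormal in $A_\G$. Since $\{u,v\}$ spans no edge, the standard parabolic $A_{\{u,v\}}$ is free of rank 2, so $A_\G$ is not virtually cyclic, and the Minasyan--Osin criterion then delivers the acylindrical hyperbolicity of $A_\G$. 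Passing to the quotient by the (necessarily finite) center $Z(A_\G)$ yields the conjecture.

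For the first implication, let $A_S$ be a proper standard parabolic subgroup. Since $A_\G$ is irreducible, $A_S$ contains no standard parabolic direct factor, so the goal is to prove it is weakly malnormal. The key observation is that weak malnormality is inherited by subgroups: if $H \le K \le G$ and $K \cap gKg^{-1}$ is finite, so is $H \cap gHg^{-1}$. The strategy is therefore to realize $A_S$ as a subgroup of some $A_T$ appearing as the amalgamating subgroup of a visual splitting of $A_\G$; Theorem A, whose hypothesis is supplied by the Intersection Conjecture, will then give that $A_T$, and hence $A_S$, is weakly malnormal. The natural choice $T = \G \setminus \{u, v\}$ succeeds whenever $\G \setminus S$ contains two non-adjacent vertices $u, v$.

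The principal obstacle is the exceptional case where $\G \setminus S$ spans a clique in $\G$ (in particular when $|\G \setminus S| = 1$). Here no $T$ strictly between $S$ and $\G$ yields a visual splitting whose amalgamating subgroup contains $A_S$. In this regime, irreducibility of $A_\G$ together with $\G$ not being a clique forces the existence of a non-edge of $\G$ involving at least one vertex of $S$. This produces a visual splitting $A_\G = A *_C B$ in which $A_S$ sits inside one of the vertex groups, say $A$. The standard Bass--Serre identity $A \cap bAb^{-1} = C \cap bCb^{-1}$ for $b \in B \setminus C$ then reduces the weak malnormality of $A_S$ in $A_\G$ to the weak malnormality of $C$ inside $B$, which I plan to handle by induction on $|\G|$, using the fact that the Intersection Conjecture restricts to standard parabolic subgroups.
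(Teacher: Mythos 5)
Your argument for the second bullet is correct and is essentially the paper's: irreducibility plus the Weak Malnormality Conjecture makes the edge group of any visual splitting weakly malnormal, and Minasyan--Osin concludes. For the first bullet, however, your route diverges from the paper's and has a genuine gap, precisely in the ``exceptional case'' you flag. Your plan is to embed each proper standard parabolic $A_S$ into the edge group of some visual splitting and then use Theorem~A together with the fact that weak malnormality passes to subgroups. This works when $\G \setminus S$ contains a non-edge, but your fallback when $\G \setminus S$ is a clique rests on a false claim: the visual splitting produced by a non-edge meeting $S$ need \emph{not} place $A_S$ inside a vertex group, because both endpoints of that non-edge may lie in $S$. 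Take $\G$ to be the path $a - b - c$ with both labels at least $3$ (irreducible, not a clique) and $S = \{a,c\}$. Then $\G \setminus S = \{b\}$ is a clique, the unique non-edge $\{a,c\}$ is contained in $S$, the unique visual splitting is $A_{\{a,b\}} *_{\langle b \rangle} A_{\{b,c\}}$, and $A_S = A_{\{a,c\}}$ lies in neither vertex group. The same phenomenon occurs with both vertex groups being cliques (e.g.\ $K_4$ minus an edge, all labels $3$, with $S$ the two non-adjacent vertices), so the induction on $|V(\G)|$ you defer to also has nowhere to go: the vertex groups admit no visual splitting, and the Weak Malnormality Conjecture is not available for general clique Artin groups. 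As written, the first implication is proved only for those $A_S$ with a non-edge in $\G \setminus S$.

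The paper closes exactly this hole with a single tree-geometric statement, Proposition~\ref{prop: from wm edge group to every wm parabolic}: if the edge group of \emph{one} visual splitting is weakly malnormal, then \emph{every} proper parabolic subgroup of $A_\G$ is weakly malnormal. The idea is that a proper parabolic $P$ stabilises a proper subtree $T'$ of the Bass--Serre tree; one picks an edge $e \notin T'$ incident to $T'$ and a translate $he$ with $\mathrm{Stab}(e) \cap \mathrm{Stab}(he)$ finite, then produces translates $g_1T'$ and $g_2T'$ that are disjoint and separated by a geodesic containing $e$ and $he$. The intersection $P^{g_1} \cap P^{g_2}$ must fix that geodesic pointwise and is therefore finite. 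This is the ingredient your proof is missing: with it, Theorem~A applied to a single splitting (whose hypothesis is supplied by the Intersection Conjecture) immediately yields weak malnormality of all proper parabolics, including those of your exceptional case. You would either need to prove such a promotion lemma or find a genuinely different treatment of parabolics $A_S$ with $\G \setminus S$ a clique.
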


Theorem~A can be used to show the acylindrical hyperbolicity of many new classes of Artin groups. For instance, using the existing results about intersections of parabolic subgroups in Artin groups of FC type \cite{antolin_subgroups_2023}, we obtain the following: 

\begin{CorC}
	Even Artin groups of FC type satisfy the Acylindrical Hyperbolicity Conjecture. In addition, any
	Artin group of FC type that visually splits over a spherical-type parabolic subgroup satisfies the Acylindrical Hyperbolicity Conjecture. 
\end{CorC}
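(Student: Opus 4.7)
The two parts of Corollary~C will both be deduced from Theorem~A applied to a suitable visual splitting, with the main external input being the theorem of Antol\'{\i}n--Foniqi \cite{antolin_subgroups_2023} on intersections of spherical-type parabolic subgroups in FC-type Artin groups. The strategy is to verify the parabolic intersection hypothesis of Theorem~A for the amalgamating subgroup of the splitting.

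The second statement is almost immediate. Given an irreducible FC-type Artin group $A_\G$ and a visual splitting $A_\G = A_{\G_1} *_{A_\Ge} A_{\G_2}$ with $A_\Ge$ of spherical type, every conjugate of $A_\Ge$ is again a spherical-type parabolic subgroup of $A_\G$, so the result of \cite{antolin_subgroups_2023} guarantees that the intersection of any two such conjugates is a parabolic subgroup of $A_\G$. This is precisely the hypothesis of Theorem~A, whose conclusion yields the acylindrical hyperbolicity of $A_\G/Z(A_\G)$.

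For the first statement, I would split the argument into two cases according to whether $\G$ is a clique. If $\G$ is complete, the FC property forces $A_\G$ to be of spherical type; combined with the evenness and irreducibility hypotheses, the classification of finite irreducible Coxeter groups leaves only the dihedral types, so $A_\G = A_{I_2(2m)}$ for some $m\ge 2$, and the acylindrical hyperbolicity of its central quotient is a theorem of Calvez--Wiest \cite{CalvezWiest2017Acylindrical}. If $\G$ is not a clique, then $A_\G$ admits some visual splitting $A_\G = A_{\G_1} *_{A_\Ge} A_{\G_2}$, and the goal becomes to verify the intersection hypothesis of Theorem~A for this splitting.

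The main obstacle is precisely this last point: in the even FC-type setting one cannot in general find a visual splitting whose amalgamating subgroup is of spherical type, as shown for example by the $4$-cycle with even edge labels, whose only visual splittings have amalgamating subgroup a free group of rank two. To bypass this, the plan is to invoke a stronger intersection property available for even FC-type Artin groups, namely that the intersection of any two (not necessarily spherical-type) parabolic subgroups is a parabolic subgroup; granting this, Theorem~A applies directly to any visual splitting of $A_\G$. Establishing this stronger intersection property constitutes the main technical work, and can likely be carried out by an induction on $|V(\G)|$ that combines the spherical-type intersection theorem of Antol\'{\i}n--Foniqi with the rigidity imposed by even edge labels (in particular, the fact that every $3$-clique in an even FC presentation is very constrained).
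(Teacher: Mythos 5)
Your overall architecture matches the paper's: both assertions are reduced to verifying the intersection hypothesis of Theorem~A for a suitable visual splitting, with the spherical clique case disposed of by Calvez--Wiest \cite{CalvezWiest2017Acylindrical}. Your treatment of the second statement is essentially the paper's, except for a citation mix-up: the fact that intersections of spherical-type parabolic subgroups in an FC-type Artin group are parabolic is due to \cite{MorrisWright2021} (generalised in \cite{moller_parabolic_2023}), and the paper rederives it by applying Proposition~\ref{lem: IP via Deligne} to the CAT(0) cubical Deligne complex; it is \emph{not} the content of \cite{antolin_subgroups_2023}, which concerns only the even case. Your observation that the even FC-type statement is not subsumed by the spherical-amalgam statement (the even-labelled $4$-cycle, whose only visual splittings are over a rank-two free group) is correct and is exactly why the two claims are stated separately.

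The genuine gap is in the first statement. You correctly identify that what is needed is the full Intersection Conjecture for even FC-type Artin groups --- so that Theorem~A applies to an arbitrary visual splitting, whose edge group need not be spherical --- but you then defer its proof to an unspecified ``induction on $|V(\G)|$'' described as the ``main technical work''. No inductive step is given, and proving the Intersection Conjecture for even FC-type Artin groups is a substantial theorem in its own right, not something that follows routinely from the spherical-type intersection results. The irony is that this is precisely what the reference you already invoke proves: Antol\'{\i}n--Foniqi \cite{antolin_subgroups_2023} establish that even FC-type Artin groups satisfy the Intersection Conjecture in full, and this is exactly how the paper's proof of Theorem~\ref{thm: even FC-type} closes the argument. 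You appear to have misattributed to \cite{antolin_subgroups_2023} the weaker spherical-type statement, and consequently set out to reprove its actual main theorem. With the citation corrected, your argument coincides with the paper's; as written, the proposal is incomplete at its central step.
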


Note that in Theorem~A, the condition on the edge group is strictly weaker than requiring that the whole Artin group $A_\G$ satisfies the Intersection Conjecture. In \Cref{Sec:weaker IP}, we give examples of how to check this property in some cases, by using a framework of Godelle--Paris \cite{GodelleParis2012KPi1} to construct suitable CAT(0) cube complexes associated to Artin groups. As an application, we derive the acylindrical hyperbolicity of some new Artin groups whose underlying graph is a cone (\Cref{cor:AH_wheel}), and for which the Intersection Conjecture is currently unknown. 

\medskip 

In some special cases,  the main hypothesis of Theorem~A may be verified simply  by checking that the edge group is weakly malnormal in one of the vertex groups. We thus provide a list of Artin groups for which we know the Weak Malnormality Conjecture, as this allows us to quickly verify this condition, and also allows us to construct new examples of acylindrically hyperbolic Artin groups (see \Cref{cor: AH for spherical even FC and Euclidean}):

\begin{ThmD}
		The Weak Malnormality Conjecture holds for the following classes of groups: 
	\begin{itemize}
		\item Artin groups satisfying the hypothesis of Theorem~A (for instance, even Artin groups of FC type),
		\item Artin groups of spherical type, 
		\item two-dimensional Artin groups.
	\end{itemize}
\end{ThmD}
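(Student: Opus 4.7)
The ``only if'' direction of the Weak Malnormality Conjecture is uniform across the three classes and straightforward. Suppose $A_\Omega$ contains a standard parabolic $A_{\Gamma'}$ that is a direct factor of $A_\Gamma$, so $\Gamma = \Gamma' * \Gamma''$ is a join. Writing any $g = g'g'' \in A_\Gamma$ with $g' \in A_{\Gamma'}$ and $g'' \in A_{\Gamma''}$, the element $g''$ commutes pointwise with $A_{\Gamma'}$ while $g'$ normalises it, so $g A_{\Gamma'} g^{-1} = A_{\Gamma'} \subseteq A_\Omega$. Hence $g A_\Omega g^{-1} \cap A_\Omega$ always contains the infinite subgroup $A_{\Gamma'}$, and $A_\Omega$ is not weakly malnormal. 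I would dispose of this direction first.

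For the ``if'' direction --- assuming $A_\Gamma$ is irreducible (the reducible case reduces factor-by-factor) and $A_\Omega$ contains no direct-factor parabolic --- my overall strategy is a \emph{reduction to Theorem~A}: enlarge $A_\Omega$ to a standard parabolic $A_{\Omega'}$ realised as the amalgamating subgroup of a visual splitting meeting the hypothesis of Theorem~A. Since weak malnormality passes to subgroups (as $gHg^{-1} \cap H \subseteq gKg^{-1} \cap K$ whenever $H \leq K$), weak malnormality of $A_{\Omega'}$ descends to $A_\Omega$. Concretely, whenever $\Gamma \setminus \Omega$ contains two non-adjacent vertices $v_1, v_2$, the set $\Omega' := \Gamma \setminus \{v_1, v_2\}$ provides a visual splitting $A_\Gamma = A_{\Gamma \setminus \{v_1\}} *_{A_{\Omega'}} A_{\Gamma \setminus \{v_2\}}$, and Theorem~A applies once the intersection hypothesis on conjugates of $A_{\Omega'}$ is supplied. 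For Class~1 this hypothesis comes from the standing assumption on the class (for even FC-type, via the Intersection Property of \cite{antolin_subgroups_2023}). For Classes~2 and~3 the full Intersection Property for pairs of parabolic subgroups is established --- in the spherical-type case via \cite{moller_parabolic_2023}, in the two-dimensional case via work of Martin--Vaskou and related results --- which certainly supplies the hypothesis of Theorem~A.

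The principal obstacle is the \emph{non-separating case}, in which $\Gamma \setminus \Omega$ induces a clique and no pair $v_1, v_2$ as above exists; here Theorem~A cannot be invoked directly. I would then produce the conjugate witness by hand. For Classes~2 and~3, and for Class~1 whenever the full Intersection Property is available, $g A_\Omega g^{-1} \cap A_\Omega$ is automatically a parabolic subgroup and hence torsion-free, so it suffices to exhibit a single $g$ rendering the intersection trivial. For spherical-type Artin groups I would use a Garside-theoretic conjugation --- e.g.\ conjugating by the Garside element $\Delta$ or by an element shifting the support of $\Omega$ --- verifying triviality via normal-form analysis (for instance, in dihedral $I_2(p)$ with $\Omega = \{s\}$, conjugation by $t$ suffices by a simple abelianisation argument). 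For two-dimensional Artin groups I would exploit the action on the modified Deligne complex, in which $A_\Omega$ stabilises a distinguished simplex $\sigma_\Omega$: choosing $g$ that translates $\sigma_\Omega$ off itself forces the intersection to stabilise no simplex and hence to be trivial. I expect the bulk of the technical work to lie in these residual verifications, especially the Garside-theoretic analysis for complete-graph spherical types and for maximal parabolics of larger spherical types, and the fine control of stabilisers in the modified Deligne complex for two-dimensional Artin groups.
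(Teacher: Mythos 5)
Your reduction ``enlarge $A_\Omega$ to an edge group $A_{\Omega'}$ of a visual splitting and use that weak malnormality passes to subgroups'' is sound as far as it goes, but it is not the engine that makes the theorem work, and the cases it leaves behind are not residual --- they are essentially the whole theorem for two of the three classes. For spherical type, $\Gamma$ is a clique, so no visual splitting exists and every instance is your ``non-separating case''; your Garside-theoretic fallback does not survive scrutiny, since conjugation by $\Delta$ permutes the standard parabolic subgroups, so $\Delta A_\Omega \Delta^{-1} \cap A_\Omega$ is itself a standard parabolic and typically infinite (e.g.\ in $B_4$ with $\Omega = \{s_1,s_2\}$ the intersection with the $\Delta$-conjugate contains $\langle s_2\rangle$). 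The paper instead invokes the theorem of Antol\'in--Cumplido that an irreducible, non-cyclic, non-dihedral spherical-type $A_\Gamma$ contains $A_{\Gamma'} * \Z$ with the given proper parabolic as free factor, and deduces weak malnormality from the action on the Bass--Serre tree of that free product; only the dihedral case is done by the abelianisation/centraliser computation you sketch. For two-dimensional Artin groups your premise is false: the paper states explicitly that the Intersection Conjecture is \emph{not} known for all two-dimensional Artin groups (only for the $(2,2)$-free ones), and moreover $\Gamma$ can be a clique (e.g.\ the $(3,3,3)$-triangle group), so again no visual splitting need exist.

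The key idea you are missing is the paper's propagation mechanism: if \emph{one} suitable subgroup is weakly malnormal --- the edge group of a visual splitting, or a vertex stabiliser with unbounded link in a CAT(0) Deligne complex --- then \emph{every} proper parabolic $P$ is weakly malnormal. In the tree case one translates the subtree $T'$ stabilised by $P$ by elements $g_1, g_2$ so that $g_1T'$ and $g_2T'$ are disjoint and the unique geodesic between them contains a segment with finite pointwise stabiliser; then $P^{g_1}\cap P^{g_2}$ fixes that geodesic and is finite. In the Deligne-complex case one must do more than ``translate $\sigma_\Omega$ off itself'': an element of $A_\Omega^g\cap A_\Omega$ fixes both simplices and hence the CAT(0) geodesic between them, so it \emph{does} stabilise cells along the way; the paper forces this pointwise stabiliser to be finite by concatenating segments meeting at angles greater than $\pi$, with a finite-stabiliser segment (supplied by Vaskou's weakly malnormal dihedral parabolic) inserted in the middle. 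Without one of these propagation arguments your plan does not close even for Class~1, since the standing hypothesis of Theorem~A only gives the conjugate-intersection property for the one edge group $A_\Omega$, not for every enlargement $A_{\Omega'}$ your containment trick would require.
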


\paragraph{Organisation of the paper.} In Section~\ref{Background}, we recall the terminology and some standard results about Artin groups and their parabolic subgroups. In Section~\ref{sec:AH}, we prove Theorem~A and Corollary~C by studying in detail the action of these Artin groups on their Bass--Serre trees. We also use CAT(0) cube complexes introduced by Godelle--Paris to prove the acylindrical hyperbolicity of additional classes of Artin groups. In Section~\ref{sec:WM}, we prove Theorems~B and~D by studying the geometry of the orbits of parabolic subgroups in a suitable complex (depending on the case: Bass-Serre tree of a splitting, or Deligne complex of the group).

\paragraph{Acknowledgements.} Part of this work was conducted during the 2023 ICMS workshop ``Polyhedral Products: a Path Between Homotopy Theory and Geometric Group Theory'', as well as during the 2023 AIM workshop ``Geometry and topology of Artin groups''. The authors warmly thank the organisers, as well as the personnel of these institutes, for providing a supportive and mathematically rich environment.

\section{Preliminaries on Artin groups} \label{Background}

\begin{definition}
	Let $\G$ be a graph with vertices labeled by the set $S$ and any edge between $s$ and $t$ labeled by an integer $m_{st}\in \{2,3,\dots\}.$ Define the Artin group by the presentation  $$A_\G= \langle S \mid 
	\underbrace{stst\dots}_{m_{st}\text{ terms}}=\underbrace{tsts\dots}_{m_{st}\text{ terms}}\ 
	\textrm{ for all edges in } \G\rangle$$ 
\end{definition}

Note that in this definition, two vertices $s$ and $t$ which are not joined by an edge in~$\G$ have the free relation and in this case we denote $m_{st}=\infty$. The graph $\G$ is oftern called the presentation graph. In some literature, a different defining graph is used called the Dynkin Diagram, wherein edges with $m_{st}=2$  are omitted, while edges with  $m_{st}=\infty$ are included. 

For every Artin group, there is an associated Coxeter group, which is obtained  by adding to the Artin presentation the relation $s^2=1$ for all generators $s$. An Artin group is called \textit{spherical-type} or \textit{finite-type} if the corresponding Coxeter group is finite. Such Artin groups have well-understood algebraic and geometric properties in comparison to their infinite-type cousins. 

The \textit{dimension} of an Artin group $A_\G$ is the maximal size of a subgraph $\Omega \subset \G$ such that $A_\Omega$ is spherical-type.  So for example, a 2-dimensional Artin group is one for which the only spherical-type parabolic subgroups are either cyclic (1-generator) or dihedral (2-generator) Artin groups.

\begin{definition}
	An Artin group $A_\G$ is said to be: 
	\begin{itemize}
		\item of \textit{spherical type} if the corresponding Coxeter group $W_\G$ is finite,
		\item of \textit{FC type} if for every induced complete subgraph $\Gamma' \subset \Gamma$, the corresponding Coxeter group $W_{\Gamma'}$ is finite,
		\item \textit{two-dimensional} if for every triangle of $\G$ with vertices $a, b, c$, we have 
		$$\frac{1}{m_{ab}} + \frac{1}{m_{bc}} + \frac{1}{m_{ac}} \leq 1$$
		(This is equivalent to requiring that $A_\G$ has cohomological dimension $2$.)
		\item of \textit{even type} if all labels of $\G$ are even.
	\end{itemize}
\end{definition}

\subsection{Parabolic subgroups}

For an induced subgraph $\Ge$ of $\G$, the Artin group $A_{\Ge}$ embeds as a 
subgroup of $A_\G$ \cite{VanderLek1983}.
We call such a subgroup a \textit{standard parabolic subgroup}.  A \textit{parabolic subgroup} is a conjugate of some standard parabolic subgroup.  
Parabolic subgroups play a central role in our understanding of Artin groups.  For example, many conjectures about Artin groups can be reduced to the case of parabolic subgroups corresponding to cliques in $\G$, that is, if $A_{\Ge}$ satisfies the conjecture  for every clique $\Ge$ in $\G$, then the conjecture also holds for $A_\G$ (\cite{GodelleParis2012Basic, MorrisWright2019}). 

We say that an Artin group $A_\G$ is \textit{reducible} if there exist two induced disjoint subgraphs $\G_1, \G_2$ of $\G$ such that $V(\G) = V(\G_1) \cup V(\G_2)$, and every vertex of $\G_1$ is connected to every vertex of $\G_2$ by an edge labelled $2$. In that case, we have that $A_\G$ decomposes as the direct product $A_{\G_1} \times A_{\G_2}$, and the parabolic subgroups $A_{\G_1}, A_{\G_2}$ are called \textit{direct factors} of $A_\G$. If no such subgraphs $\G_1, \G_2$ exist, the Artin group~$A_\G$ is called \textit{irreducible}. 

The behavior of parabolic subgroups will be key to the discussion which follows.  As noted above, we do not know in general if intersections of parabolic subgroups are parabolic.  A useful fact proven by Blufstein and Paris \cite{blufstein_parabolic_2023}, is that if  $P \subseteq P'$ are two parabolic subgroups of $A_\G$, then $P$ is also a parabolic subgroup of $P'$.  
The following lemma will be often used in this article.

\begin{lemma}\label{lem: bounded chain parabolics}
	Let $A_\Gamma$ be an Artin group. Then for every sequence of parabolic subgroups $H_0 \subsetneq \cdots \subsetneq H_n$, we have $n \leq |V(\Gamma)|$. In particular, there is an upper bound on the length of chains of parabolic subgroups. 
\end{lemma}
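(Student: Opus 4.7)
The plan is to produce a rank invariant for parabolic subgroups and combine it with the Blufstein--Paris result (recalled just before the lemma) to force this rank to increase strictly along any chain.

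First, to a parabolic subgroup $P = g A_\Omega g^{-1}$ of $A_\Gamma$ I would associate the integer $|V(\Omega)|$, called its rank. The first task is to verify that this is well-defined, independently of the choices of $\Omega$ and $g$. The cleanest route is to project to the associated Coxeter group $W_\Gamma$: the image of $P$ is a Coxeter-theoretic parabolic subgroup of $W_\Gamma$, and such subgroups admit a well-defined rank (via the reflection representation) which coincides with $|V(\Omega)|$. This well-definedness is the step that requires the most care; everything else is essentially bookkeeping.

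Next, given a chain $H_0 \subsetneq \cdots \subsetneq H_n$ of parabolic subgroups of $A_\Gamma$, I would show that the rank is strictly increasing. Fix an $i$. By the Blufstein--Paris result, $H_{i-1}$ is a parabolic subgroup of $H_i$. After conjugating everything so that $H_i = A_{\Omega_i}$ is a standard parabolic, the subgroup $H_{i-1}$ takes the form $h A_{\Omega'} h^{-1}$ with $h \in A_{\Omega_i}$ and $\Omega' \subseteq \Omega_i$. If one had $\Omega' = \Omega_i$, then conjugating $A_{\Omega_i}$ by the element $h \in A_{\Omega_i}$ would yield $H_{i-1} = A_{\Omega_i} = H_i$, contradicting the strict inclusion. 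Hence $\Omega' \subsetneq \Omega_i$, and therefore $\mathrm{rank}(H_{i-1}) = |V(\Omega')| < |V(\Omega_i)| = \mathrm{rank}(H_i)$.

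Finally, since the chain yields strictly increasing integers $\mathrm{rank}(H_0) < \cdots < \mathrm{rank}(H_n)$, all bounded above by $|V(\Gamma)|$, the bound $n \leq |V(\Gamma)|$ follows immediately. The only delicate ingredient is the well-definedness of the rank; the rest of the argument is a short induction using Blufstein--Paris and the observation that conjugating a standard parabolic by one of its own elements gives back the same subgroup.
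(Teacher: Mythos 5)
Your proof is correct and follows essentially the same route as the paper: invoke Blufstein--Paris to view each $H_{i-1}$ as a proper parabolic subgroup of $H_i$ and conclude that the sizes of the defining subgraphs strictly increase, bounding the chain length by $|V(\Gamma)|$. The only difference is that you explicitly verify the well-definedness of the rank (via the Coxeter quotient), a point the paper's shorter proof leaves implicit; this is a worthwhile clarification but not a different argument.
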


\begin{proof}  Say $H_i$ is a conjugate of $A_{\Omega_i}$.  By Blufstein--Paris \cite{blufstein_parabolic_2023}, for each $i$, $H_{i-1}$ is a (proper) parabolic subgroup of $H_i$, so $\Omega_{i-1}$ must be a proper subgraph of $\Omega_i$, that is, $\Omega_0 \subsetneq \cdots \subsetneq \Omega_n \subseteq \G$.  The result now follows.
\end{proof}

A geometric construction that has played a primary role in the study of Artin groups is the Deligne complex.  For an infinite-type Artin group $A_\G$, let $\mathcal P_\G$ denote the poset consisting of cosets $aA_T \subset A_\G$ such that $A_T$ is a spherical-type parabolic subgroup.  Partially order  $\mathcal P_\G$  by inclusion.  The \textit{Deligne complex}, $D_\G$ is the cell complex whose vertices are the elements of $\mathcal P_\G$ and whose cells are cubes spanned by intervals $[aA_T, aA_{T'}]$  for pairs $aA_T \subseteq aA_{T'}$. The Artin group acts on the Deligne complex by left multiplication, and the vertex corresponding to the coset $aA_T$ is stabilized by the parabolic subgroup~$(A_T)^a := aA_Ta^{-1}$.

There are two well-known metrics on $D_\G$.  One is the standard cubical metric; this metric is CAT(0) if and only if $A_\G$ is FC-type. The other is a piecewise Euclidean metric, called the Moussong metric, in which the metric on a cube $[aA_T, aA_{T'}]$ depends on the shape of the Coxeter cell for $W_{T'}$.  It is conjectured that the Moussong metric is CAT(0) for all infinite-type Artin groups.  This has been shown to hold for all 2-dimensional Artin groups \cite{CharneyDavis1995},  some 3-dimensional Artin groups \cite{Charney2004}, and a class known as locally reducible Artin groups \cite{CharneyLocallyReducible}.

\subsection{Visual splittings}

Acylindrically hyperbolic groups do not have infinite direct factors. Likewise a group that factors as a direct product clearly has  proper subgroups which are not weakly malnormal. Thus, we will focus on irreducible Artin groups in this paper. 
We will also focus on Artin groups that can be decomposed as amalgamated products. 

\begin{definition}[Visual splitting of an Artin group] A \emph{visual splitting} of an Artin group $A_\G$ is a splitting as an amalgamated free product $A_\G = A_{\Gamma_1} *_{A_{\Ge}} A_{\Gamma_2}$ where $\G_1$ and $\G_2$ are proper, full subgraphs of $\G$ and $\Ge = \G_1 \cap \G_2$. This happens precisely when $\Gamma = \Gamma_1 \cup \Gamma_2$ and every edge between a vertex of $\Gamma_1$ and a vertex of $\Gamma_2$ is contained in $\Ge$. We will always assume that the visual splittings we consider are \textit{non-trivial}, i.e. $\Ge \neq \G_1, \G_2$.
\end{definition}

	Note that such a splitting exists if and only if $\G$ is not a clique. Indeed, assume that the two vertices $s, t \in V(\G)$ are not connected by an edge in $\G$. Then one checks that $A_\G$ splits as the amalgamated product 
	$A_\G = A_{\Gamma_1} *_{A_{\Ge}} A_{\Gamma_2}$ with $\G_1 = \G - \{s\}$, $\G_2 = \G - \{t\}$, and $\Ge = \G - \{s, t\}$. 

When an Artin group $A_\G = A_{\Gamma_1} *_{A_{\Ge}} A_{\Gamma_2}$ admits a visual splitting, we can sometimes derive properties of $A_\G$ using properties of $A_{\Gamma_1}, A_{\Gamma_2}$, and $A_{\Ge}$. 
This can for instance be done to study Artin groups of FC type, as such groups can always be decomposed as a sequence of nested amalgamated free products, where the final splitting has spherical-type edge groups. 

\subsection{Existing results for the main conjectures} \label{sec:previous results}

In this section we will review known results for the main conjectures, and prove some elementary implications that we will use later. 

\paragraph{Intersection Conjecture.}
 In 1983, Van der Lek showed that the intersection of standard parabolic subgroups $A_{\G_1}\cap A_{\G_2}$ is always parabolic \citep{VanderLek1983}. More recently, the more general Intersection Conjecture, namely the property that the intersection of any two parabolics is again parabolic, has been proven to hold for the following classes of Artin groups:

\begin{itemize}
	\item Spherical-type Artin groups \cite{CumplidoGebhardtGonzalesMenesesWiest2017}
	\item Right-angled Artin groups and other graphs of groups \cite{Antolin2015}
	\item Large type (i.e. all labels satisfy $m_{s,t}\geq 3$) Artin groups \cite{CumplidoMartinVaskou2020}
	\item (2,2)-free two-dimensional Artin group, i.e. $\G$ does not have two consecutive edges labeled by 2 and the cohomological dimension of $A_\G$ is 2. \cite{Blufstein2022}

\item Euclidean type of the form $\tilde{A}_n$ and $\tilde{C}_n$ \cite{haettel_lattices_2023}
\item Even FC type Artin groups \cite{antolin_subgroups_2023}
	\end{itemize}
	
While this conjecture remains open in general, there are several other classes of Artin groups for which some weaker version of the intersection property is known to hold.  For example, in \cite{MorrisWright2021} it is shown that in FC-type Artin groups, intersections of two spherical-type parabolics are parabolic, and this was further generalized by M\"oller, Paris, and Varghese in  \cite{moller_parabolic_2023} to include the case where just one of the two parabolics is spherical-type. 
	
	The proof  in \cite{MorrisWright2021} that intersections of spherical-type parabolics are parabolic for FC-type Artin groups uses the fact that the cubical metric on the Deligne complex $D_\G$ is CAT(0) for these groups.  This argument can be generalized to other Artin groups acting on CAT(0) spaces where the intersection property is known for the stabilizers of  vertices. We include the proof here for completeness.

	\begin{proposition}\label{lem: IP via Deligne} Let $A_\G$ be an Artin group acting on a polyhedral complex $X$  with a piecewise Euclidean CAT(0) metric, where each cell stabiliser is a parabolic subgroup of~$A_\Gamma$. Assume that the action is without inversions, that is, the stabilizer of each cell pointwise fixes the entire cell.  
		Let $\mathcal{P}$ be the collection of parabolic subgroups that appear as stabilizers of the cells of $X$. Suppose that the stabiliser of every vertex of $X$ satisfies 
		the Intersection Conjecture. Then the intersection of any two elements of $\mathcal{P}$ is again a parabolic subgroup of $A_\Gamma$.
	\end{proposition}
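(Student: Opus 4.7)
The plan is to express $P_1 \cap P_2$ as an intersection of stabilisers of cells along a CAT(0) geodesic, and then prove by induction that any such intersection is parabolic, reducing at each step to the Intersection Conjecture in a single vertex stabiliser.

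Given $P_1, P_2 \in \mathcal{P}$ realised as stabilisers of cells $C_1, C_2$, I would consider the unique CAT(0) geodesic $\gamma$ from the barycentre of $C_1$ to the barycentre of $C_2$. Because the action is without inversions, the pointwise stabiliser of any interior point of a cell $D$ coincides with $\mathrm{Stab}(D)$. Tracking the minimal cells visited by $\gamma$ produces a finite sequence $D_0 = C_1, D_1, \dots, D_n = C_2$ in which each consecutive pair meets along a common face $\overline{D_i}\cap\overline{D_{i+1}}$ (this uses that in a polyhedral complex the intersection of two closed cells is a face of both). Since $P_1 \cap P_2$ fixes every point of $\gamma$, and the reverse inclusion is immediate, one obtains $P_1 \cap P_2 = \bigcap_{i=0}^{n} \mathrm{Stab}(D_i)$.

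The main step is then the following key lemma, proved by induction on $n$: for any finite sequence of cells $D_0, \dots, D_n$ in which consecutive cells share a face, the intersection $\bigcap_{i=0}^n \mathrm{Stab}(D_i)$ is a parabolic subgroup of $A_\Gamma$. The case $n=0$ is immediate from the hypothesis on $\mathcal{P}$. For the inductive step, set $Q := \bigcap_{i=0}^{n-1} \mathrm{Stab}(D_i)$, which is parabolic by the induction hypothesis, and pick a vertex $v$ of the common face of $D_{n-1}$ and $D_n$. Because the action is without inversions, $Q \subseteq \mathrm{Stab}(D_{n-1}) \subseteq \mathrm{Stab}(v)$ and $\mathrm{Stab}(D_n) \subseteq \mathrm{Stab}(v)$, so both parabolic subgroups of $A_\Gamma$ lie inside a common vertex stabiliser.

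To finish, I would invoke Blufstein--Paris to regard $Q$ and $\mathrm{Stab}(D_n)$ as parabolic subgroups of the Artin group $\mathrm{Stab}(v)$. Since $\mathrm{Stab}(v)$ satisfies the Intersection Conjecture by hypothesis, $Q \cap \mathrm{Stab}(D_n) = \bigcap_{i=0}^n \mathrm{Stab}(D_i)$ is parabolic in $\mathrm{Stab}(v)$, hence parabolic in $A_\Gamma$. Combined with the first paragraph, this gives the proposition. The main obstacle I expect is the first step: verifying carefully that the CAT(0) geodesic traverses only finitely many open cells and that consecutive cells along it share a genuine common face of the complex. This is standard for piecewise Euclidean polyhedral CAT(0) complexes with finitely many shapes of cells, but it is the place where the hypotheses on the metric and on the combinatorics of $X$ must be used honestly.
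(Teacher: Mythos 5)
Your proposal is correct and follows essentially the same strategy as the paper: use the CAT(0) geodesic between interior points of the two cells to replace $P_1\cap P_2$ by an intersection of stabilisers of a finite chain of cells meeting along faces, then induct along the chain, using Blufstein--Paris to view the two relevant subgroups as parabolics of a common vertex stabiliser and applying the Intersection Conjecture there. The paper phrases the chain as a combinatorial edge path and you phrase it as the sequence of supports of points of the geodesic, but the reduction, the induction, and the use of the hypotheses are the same.
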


	\begin{proof}
	 Let $P$ and $P'$ be two parabolics in $\mathcal{P}$.  There exist cells $\sigma$ and $\sigma'$ in $X$ such that $P,P'$ are the stabilizers of $\sigma, \sigma'$ respectively.  
	 Let $x, x'$ be two points in the interior of $\sigma, \sigma'$ respectively. Since $P \cap P'$ fixes both $\sigma$ and $\sigma'$, it fixes $x$ and $x'$, and hence the unique CAT(0) geodesic $\gamma$ between them.  Moreover, because the action is without inversions, it fixes the subcomplex consisting of the union of all the cells that contain a point of $\gamma$ in their interior.  In particular, it fixes some edge path $\rho$ that contains all vertices of $\sigma$ and $\sigma'$. Thus, $P \cap P'$ is equal to the pointwise stabiliser of $\rho$. 
	 The result now follows from the following: 
	 
	 \medskip
	 
	 \textbf{Claim.} Let $v_0, \ldots, v_k$ be a combinatorial path in $X$, with stabilisers $P_0, \ldots, P_k$ respectively. Then the intersection $\bigcap_{0 \leq i \leq k} P_i$ is a parabolic subgroup of $A_\G$. 
	 
	 \medskip

	 Let us prove this claim by induction on $k\geq 1$. For $k=1$, the  intersection $P_0 \cap P_1$ is the stabilizer of a single edge. By hypothesis, the stabilizer of the edge is a parabolic subgroup in $\mathcal{P}$.

	 Now suppose that we have proved the result for some $k\geq 1$. Consider a combinatorial path $v_0, \ldots, v_{k+1}$ with stabilisers $P_0, \ldots, P_{k+1}$ . By the induction hypothesis, we have that $\cap_{0 \leq i \leq k} P_i$ is a parabolic subgroup of $A_\G$. Note that this is a subgroup of $P_k$, and hence a parabolic subgroup of $P_k$ by \cite{blufstein_parabolic_2023}. We also know that $P_k \cap P_{k+1}$ is  the stabilizer of the  last edge of this path, and so $P_k\cap P_{k+1}$ is a parabolic subgroup of $P_k$. 
	 
	 We can thus write
	 
	 \[\bigcap_{0 \leq i \leq k+1} P_i=(\bigcap_{0 \leq i \leq k} P_i)\cap (P_k\cap P_{k+1})\]

	Since the Intersection Conjecture holds in $P_k$ by assumption, it follows that  $\bigcap_{0 \leq i \leq k+1} P_i$  is a parabolic subgroup of $P_k$, and hence of $A_\G$. 
	\end{proof}

	In particular, for Artin groups for which the Moussong metric on Deligne complex is known to be CAT(0), the lemma above applies to show that intersections of spherical-type parabolics are parabolic. As noted above, this holds for 2-dimensional Artin groups, some 3-dimensional Artin groups, and locally reducible Artin groups.
	
	\begin{remark}
Note that the above proposition can be generalised to actions on complexes that satisfy other forms of nonpositive curvature, as in \cite{CumplidoMartinVaskou2020, Blufstein2022}. We leave it to the reader to check that the key geometric feature necessary  for the  proof to carry over is the following property: If an element $g \in A_\G$ fixes two vertices $v, v'$ of $X$, then it also fixes some combinatorial path of $X$ from $v$ to $v'$. Such a weak form of convexity is satisfied by many forms of non-positive curvature.
\end{remark}

	We can also obtain more examples of groups that satisfy the Intersection Conjecture by taking products of groups where the Intersection Conjecture is known. 
	
	\begin{lemma}\label{lem: IP reducible}
		Suppose that $A_\G$ is a reducible Artin group with direct factors $A_\G=A_{\G_1}\times \dots \times A_{\G_k}$. If the Intersection Conjecture holds for all parabolic subgroups in each direct factor $A_{\G_i}$ then the Intersection Conjecture holds for $A_\G$. 
	\end{lemma}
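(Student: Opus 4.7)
The plan is to exploit the way parabolic subgroups interact with the direct product decomposition: every parabolic subgroup of $A_\G$ is a product of parabolic subgroups of the factors, and intersections of products of subgroups decompose coordinate-wise.

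First I would verify the structural statement that any parabolic subgroup $P$ of $A_\G$ has the form $P = P_1 \times \dots \times P_k$ where each $P_i$ is a parabolic subgroup of $A_{\G_i}$. For a standard parabolic $A_\Omega$ with $\Omega \subseteq \G$, set $\Omega_i := \Omega \cap \G_i$. Because every vertex of $\G_i$ commutes with every vertex of $\G_j$ for $i \neq j$, the presentation yields $A_\Omega = A_{\Omega_1} \times \dots \times A_{\Omega_k}$. For a general parabolic $P = g A_\Omega g^{-1}$, write $g = (g_1, \dots, g_k)$ according to the direct product decomposition $A_\G = A_{\G_1} \times \dots \times A_{\G_k}$; then
\[
P = g A_\Omega g^{-1} = \bigl(g_1 A_{\Omega_1} g_1^{-1}\bigr) \times \dots \times \bigl(g_k A_{\Omega_k} g_k^{-1}\bigr),
\]
and each factor is a parabolic subgroup of $A_{\G_i}$.

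Next I would combine this with the elementary fact that for subgroups $P_i, Q_i \leq A_{\G_i}$,
\[
(P_1 \times \dots \times P_k) \cap (Q_1 \times \dots \times Q_k) = (P_1 \cap Q_1) \times \dots \times (P_k \cap Q_k).
\]
Applying this to two parabolic subgroups $P, Q$ of $A_\G$ and using the hypothesis that the Intersection Conjecture holds in each factor, every $P_i \cap Q_i$ is a parabolic subgroup of $A_{\G_i}$, say $P_i \cap Q_i = h_i A_{\Omega'_i} h_i^{-1}$ for some $\Omega'_i \subseteq \G_i$ and $h_i \in A_{\G_i}$.

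To conclude, I would reverse the first step: set $\Omega' := \Omega'_1 \cup \dots \cup \Omega'_k \subseteq \G$ and $h := (h_1, \dots, h_k) \in A_\G$. Then $A_{\Omega'} = A_{\Omega'_1} \times \dots \times A_{\Omega'_k}$, and
\[
P \cap Q = \prod_{i=1}^k h_i A_{\Omega'_i} h_i^{-1} = h A_{\Omega'} h^{-1},
\]
which is a parabolic subgroup of $A_\G$. I do not expect a real obstacle here; the only point requiring a little care is the decomposition of an arbitrary conjugate, which rests on the fact that every element of a direct product splits uniquely across the factors.
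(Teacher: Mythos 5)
Your proof is correct and follows essentially the same route as the paper: decompose each parabolic subgroup coordinate-wise along the direct product, intersect factor by factor, and reassemble. The paper states the coordinate-wise decomposition of parabolics without proof, whereas you verify it explicitly; this is a welcome level of detail but not a different argument.
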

	
	\begin{proof}
	Given two parabolics $P, Q$ of $A_\G$, one can decompose them as direct products
	$$ P = P_1  \times \cdots \times P_k, ~~~Q = Q_1  \times \cdots \times Q_k$$
	with each $P_i, Q_i$ a parabolic subgroup of $A_{\G_i}$. We thus have 
	$$ P \cap Q = (P_1 \cap Q_1) \times \cdots \times (P_k \cap Q_k)$$
	and the result  follows from the fact that each $A_{\G_i}$ satisfies the Intersection Conjecture. 
	\end{proof}

We also add the following examples, which will be used later in this article. 
	
	\begin{lemma} \label{lem:IP for 3gen}
		Suppose that $A_\G$ is an Artin group with 3 or fewer generators. Then $A_\G$ satisfies the Intersection Conjecture. 
	\end{lemma}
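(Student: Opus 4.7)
The plan is to verify the statement by cases on $|V(\G)|$, showing that every such Artin group already falls into one of the classes from \Cref{sec:previous results} where the Intersection Conjecture is known: right-angled Artin groups \cite{Antolin2015}, spherical-type Artin groups \cite{CumplidoGebhardtGonzalesMenesesWiest2017}, large-type Artin groups \cite{CumplidoMartinVaskou2020}, and $(2,2)$-free two-dimensional Artin groups \cite{Blufstein2022}.

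The cases $|V(\G)|\leq 2$ are immediate: $A_\G$ is then the trivial group, $\Z$, $F_2$, or a dihedral Artin group of spherical type, each of which is covered by the above.

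For $|V(\G)|=3$, I would split on the edge labels of $\G$. If every finite edge label equals $2$, then $A_\G$ is a right-angled Artin group; if every finite label is at least $3$, then $A_\G$ is of large type. The remaining \emph{mixed} case is when $\G$ has at least one label equal to $2$ and at least one label $\geq 3$. In this case I would argue that $A_\G$ is either spherical type or two-dimensional and $(2,2)$-free. If $\G$ is a triangle whose associated Coxeter group is finite, it is spherical. Otherwise $A_\G$ is two-dimensional: vacuously if $\G$ contains no triangle, and via the inequality $1/m_{ab}+1/m_{bc}+1/m_{ac}\leq 1$ when $\G$ is a triangle with infinite Coxeter group. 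The $(2,2)$-free property holds as well: in the triangle subcase, the same inequality forbids two labels from being $2$ (otherwise one would need $1/m\leq 0$); in the non-triangle subcase, at most one of the (at most two) edges can be labeled $2$, since otherwise every finite label would be $2$, contradicting the mixed assumption.

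The argument is essentially bookkeeping, and there is no substantive obstacle: the combined scope of the cases already established in \Cref{sec:previous results} is broad enough to cover every Artin group on three or fewer generators, and the lemma simply makes this observation explicit.
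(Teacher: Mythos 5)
Your proof is correct, but it takes a genuinely different route from the paper. The paper's proof handles the one- and two-generator cases the same way you do, but for three generators it does not attempt a reduction to known classes: after disposing of the spherical case via \cite{CumplidoGebhardtGonzalesMenesesWiest2017}, it invokes \Cref{lem: IP via Deligne} applied to a CAT(0) complex whose cell stabilisers exhaust all proper parabolics --- the Deligne complex with its Moussong metric when $\G$ is complete (where all proper parabolics are spherical), and Vaskou's augmented Deligne complex \cite{Vaskou2022} when $\G$ is not complete. Your argument instead partitions the three-generator case by edge labels and checks that every possibility lands in one of the classes listed in \Cref{sec:previous results}: all finite labels equal to $2$ gives a RAAG \cite{Antolin2015}, all labels at least $3$ gives large type \cite{CumplidoMartinVaskou2020}, and in the mixed case the group is either spherical or two-dimensional and $(2,2)$-free \cite{Blufstein2022}, with the inequality $1/m_{ab}+1/m_{bc}+1/m_{ac}\leq 1$ correctly ruling out two labels equal to $2$ on a triangle with infinite Coxeter group. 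The bookkeeping is complete and each citation is legitimately applicable, so the argument stands. The trade-off is that your route imports two heavier external theorems (large type and $(2,2)$-free two-dimensional) that the paper's proof does not need, while the paper's route leans on its own Proposition~\ref{lem: IP via Deligne} together with the CAT(0) property of the relevant complexes --- which also serves to advertise the machinery used throughout the rest of the paper.
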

	
	\begin{proof}
		The result is clear if $A_\G$ has one generator. If it has two generators, it is either a free group (in particular, a right-angled Artin group) or a dihedral Artin group (in particular, a spherical-type Artin group), and so the result follows from \cite{Antolin2015} and \cite{CumplidoGebhardtGonzalesMenesesWiest2017} respectively. 
		
	Let us assume that $A_\G$ has three generators.  If $A_\G$ is spherical-type, this holds by \cite{CumplidoGebhardtGonzalesMenesesWiest2017}, so assume it is infinite type.  If $\G$ is complete, then the Moussong metric on the Deligne complex $D_\G$ is CAT(0) and all proper parabolics are spherical-type, hence appear as stabilisers of cells in $D_\G$.  If $\G$ is not complete, we can instead use the augmented Deligne complex introduced in \cite{Vaskou2022},  which also admits a CAT(0) metric and has all proper parabolics as stabilisers of cells. The result now follows from Proposition~\ref{lem: IP via Deligne}.
	\end{proof}

\paragraph{Acylindrical Hyperbolicity.}
In this paragraph, we review previously known results about acylindrically hyperbolic Artin groups. 
For Artin groups of spherical type, the Acylindrical Hyperbolicity Conjecture was proved by Calvez--Wiest \cite{CalvezWiest2017Acylindrical}, following earlier result for braid groups \cite{MasurMinsky1999,Bowditch2008}. Thus, the Acylindrical Hyperbolicity Conjecture reduces to the case of Artin groups of infinite  type. Such groups are conjectured to have a trivial centre, so the conjecture asks whether these groups are acylindrically hyperbolic.
Currently, the Acylindrical Hyperbolicity Conjecture is known for several classes of Artin groups (we refer the reader to these articles for the definition of some of these classes), which we list below:

\begin{itemize}
	\item Spherical-type Artin groups, by Calvez--Wiest \citep{CalvezWiest2017Acylindrical}
	\item Right-angled Artin groups, by Osin \cite{Osin2016}.
	\item Two-dimensional Artin groups, by Vaskou \cite{Vaskou2022}, following earlier work for XXL-type Artin groups (i.e. all labels satisfy $m_{s,t}\geq 5$) by Haettel \cite{HaettelXXL},  for XL-type Artin groups (i.e. all labels satisfy $m_{s,t}\geq 4$) by Martin--Przytycki \cite{martin_acylindrical_2021}, and for some two-dimensional Artin groups admitting a specific CAT(0) model by Kato--Oguni \cite{KatoOguni2022}.
	\item Euclidean-type Artin groups, by Calvez \cite{Calvez2022}.
	\item Artin groups whose graph is not a join, by Charney--Morris-Wright \cite{MorrisWright2019}, following previous work by Chatterji--Martin \cite{ChatterjiMartin2019}.
	\item Some relatively extra-large type Artin groups, by Goldman \cite{Goldman2022}.
	\item Some locally reducible Artin groups, by Mastrocola \cite{JillThesis}.
\end{itemize}

In this article, we add to this list the class of even Artin groups of FC type, among other new examples.

\section{Artin groups with visual splittings and acylindrical hyperbolicity} \label{sec:AH} \label{Sec:Implications of Intersection conjecture}

The goal of this section is to obtain new criteria for proving acylindrical hyperbolicity and apply them to get new examples of acylindrically hyperbolic Artin groups.
We will focus primarily on the case of Artin groups with a visual splitting.  In this case, there is a clear connection between malnormality and acylindricity given by the following theorem of Minasyan and Osin \cite{minasyan_acylindrical_2015}.

\begin{theorem}\label{thm: M-O}[see \cite{minasyan_acylindrical_2015}]  Suppose $G$ splits as an amalgamated product of groups $G=A *_C B$ with $A \neq C \neq B$.  If $C$ is weakly malnormal in $G$, then $G$ is either virtually cyclic or acylindrically hyperbolic.
\end{theorem}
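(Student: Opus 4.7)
The plan is to invoke Osin's characterisation of acylindrical hyperbolicity: a non-virtually-cyclic group is acylindrically hyperbolic if and only if it admits a non-elementary action on a Gromov hyperbolic space with a loxodromic WPD element. The natural hyperbolic space here is the Bass--Serre tree $T$ of the splitting $G = A *_C B$, on which $G$ acts simplicially with vertex stabilisers conjugate to $A$ or $B$ and edge stabilisers conjugate to $C$.

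First I would produce loxodromic elements. Since $A \neq C \neq B$, one can pick $a \in A \setminus C$ and $b \in B \setminus C$, and a standard Bass--Serre computation shows that $h := ab$ is loxodromic on $T$ with some bi-infinite axis $\ell$. Conjugating $h$ by appropriate elements produces loxodromics with distinct axes, so the action is non-elementary unless $G$ itself is virtually cyclic. It remains to exhibit a loxodromic WPD element.

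The WPD condition for a loxodromic element on a tree reduces to the combinatorial statement that, for some $N \geq 1$, the pointwise stabiliser of any length-$N$ subsegment of the axis is finite. Since each edge stabiliser on $\ell$ is a conjugate of $C$, this pointwise stabiliser is an intersection of consecutive conjugates of $C$, and the goal is to show that sufficiently long such intersections collapse to finite subgroups. The weak malnormality hypothesis supplies $g_0 \in G$ with $|C \cap g_0 C g_0^{-1}| < \infty$, i.e., two edges of $T$ whose stabilisers intersect in a finite subgroup. Using this, I would construct a loxodromic whose axis contains a translate of the geodesic segment joining these two edges: either by taking a suitable product of high powers of $h$ with a conjugate of $h$ by $g_0$ (a ping-pong construction on the ends of $T$), or by replacing $g_0$ with $h^k g_0 h^{-k}$ and using the contracting dynamics of $h$ on $\partial T$ to drag the ``finitely intersecting'' pair of edges onto $\ell$. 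Once the axis contains both edges, the pointwise stabiliser of the segment between them is contained in $C \cap g_0 C g_0^{-1}$, hence finite, delivering WPD.

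The main obstacle will be precisely this geometric step of realising the finite-intersection pair along the axis: the edges stabilised by $C$ and by $g_0 C g_0^{-1}$ need not lie on any axis a priori, and conjugating by $h^k$ moves both edges simultaneously. Handling this cleanly requires a ping-pong argument combining the loxodromic dynamics of $h$ with the freedom to conjugate by $g_0$, and using the minimality and non-triviality of the splitting to ensure enough independent directions in $T$. With a loxodromic WPD element in place, Osin's criterion immediately yields that $G$ is either virtually cyclic or acylindrically hyperbolic.
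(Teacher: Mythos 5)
This statement is not proved in the paper at all: it is quoted directly from Minasyan--Osin \cite{minasyan_acylindrical_2015}, so there is no internal argument to compare against, and your sketch should be measured against the proof in that reference --- which it reconstructs quite faithfully. Their amalgam statement is a corollary of a theorem on minimal actions on trees, proved exactly as you outline: pass to the Bass--Serre tree, reduce WPD for a loxodromic element to the finiteness of the pointwise stabiliser of a long subsegment of its axis, and feed in the two edges $e$ and $g_0e$ with $\operatorname{Stab}(e)\cap\operatorname{Stab}(g_0e)$ finite supplied by weak malnormality. Two places in your sketch need tightening. First, the non-elementarity step is too quick: when $[A:C]=[B:C]=2$ the Bass--Serre tree is a line, there are no independent loxodromics, and $G$ need not be virtually cyclic without further input --- here you must use weak malnormality again: the kernel of $G\to\mathrm{Isom}(T)$ fixes every edge, hence lies in $C\cap C^{g_0}$ and is finite, so $G$ is finite-by-(subgroup of $D_\infty$) and therefore virtually cyclic; when some index is at least $3$, ping-pong produces two independent loxodromics and rules out a fixed end. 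Second, the step you correctly flag as the main obstacle is resolved in \cite{minasyan_acylindrical_2015} not by conjugating $g_0$ by powers of $h$ (which, as you observe, moves both edges at once) but by the standard fact that in a minimal action of general type on a tree, every finite segment is contained in the axis of some hyperbolic element (obtained by ping-pong from two independent loxodromics whose axes pass near the two endpoints). Applying this to the segment spanned by $e\cup g_0e$ yields a loxodromic whose long axis subsegments have pointwise stabiliser inside the finite group $\operatorname{Stab}(e)\cap\operatorname{Stab}(g_0e)$; the (true, but not completely immediate) lemma that this forces WPD for a tree action then finishes the argument via Osin's criterion. With these two points filled in, your proposal is a correct reconstruction of the cited proof.
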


Thus a key to proving acylindricity for $A_\G$ is understanding when parabolic subgroups are weakly malnormal.

\subsection{The main acylindrical hyperbolicity criterion}

The main result of this subsection is the following.

\begin{theorem}\label{thm: ah splitting IP edge}
	Let $A_\Gamma$ be an irreducible Artin group that splits visually as an amalgamated product $A_\Gamma = A_{\Gamma_1} *_{A_{\Ge}} A_{\Gamma_2}$. If the intersection of any two conjugates of $A_{\Ge}$ is again a parabolic subgroup of $A_{\Gamma}$, then $A_{\Ge}$ is weakly malnormal in $A_\Gamma$. In particular, $A_\Gamma$ is acylindrically hyperbolic. 
\end{theorem}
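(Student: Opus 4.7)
The plan is to establish weak malnormality of $A_\Ge$ in $A_\Gamma$, from which acylindrical hyperbolicity follows by \Cref{thm: M-O}: the irreducibility of $A_\Gamma$ combined with the non-triviality of the visual splitting precludes $A_\Gamma$ from being virtually cyclic. I argue by contradiction: assume $A_\Ge$ is not weakly malnormal, so that by hypothesis every intersection $A_\Ge \cap g A_\Ge g^{-1}$ is parabolic, and being infinite (hence non-trivial, since finite parabolics are trivial), every such intersection is a non-trivial parabolic subgroup.

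The argument takes place in the Bass-Serre tree $T$ associated to the visual splitting, whose edge stabilizers are exactly the conjugates of $A_\Ge$. Consider the family
\[\mathcal{P} = \{g A_\Ge g^{-1} \cap g' A_\Ge g'^{-1} : g, g' \in A_\Gamma\},\]
consisting of parabolic subgroups by hypothesis and closed under $A_\Gamma$-conjugation. Each $P \in \mathcal{P}$ coincides with the pointwise stabilizer in $T$ of the geodesic between the two relevant edges. By \Cref{lem: bounded chain parabolics}, chains in $\mathcal{P}$ under inclusion are bounded, so $\mathcal{P}$ admits a minimal element $K$, which is non-trivial under our standing assumption. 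Realize $K$ as the pointwise stabilizer of a geodesic $\pi_0$ from an edge $e_1$ to an edge $e_2$ of $T$.

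The central step is to show that $K$ lies in the kernel of the action of $A_\Gamma$ on $T$. The mechanism is an extensibility argument: for any path $\pi \supseteq \pi_0$ in $T$ (with $\pi_0$ appearing as a consecutive sub-sequence), the pointwise stabilizer of $\pi$ is still in $\mathcal{P}$ (as the 2-fold intersection of the stabilizers of the two endpoint edges of $\pi$) and is contained in $K$; by minimality it equals $K$. Iterating at each outer endpoint of $\pi_0$ shows that $K$ fixes pointwise the entire subtrees of $T \setminus \{e_1\}$ and $T \setminus \{e_2\}$ containing the outer vertices of $e_1, e_2$. A further analysis---treating the edges branching at internal vertices of $\pi_0$ by restricting $K$ to the adjacent vertex groups and invoking the Blufstein--Paris fact that a parabolic subgroup contained in a parabolic is itself parabolic---shows $K$ fixes every edge of $T$. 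Hence $K$ lies in the normal core of $A_\Ge$ in $A_\Gamma$ and is, in particular, a normal subgroup of $A_\Gamma$.

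To finish, I would invoke the structural fact that an irreducible Artin group has no non-trivial proper normal parabolic subgroup. Indeed, if $A_\Sigma$ is a standard parabolic normal in $A_\Gamma$, then for every $s \in V(\Gamma) \setminus \Sigma$ and $t \in \Sigma$ the element $sts^{-1}$ lies in $A_\Sigma$; projecting to the Coxeter quotient, the reflection $w_s w_t w_s$ must lie in $W_\Sigma$, which forces $m_{st} = 2$, thereby exhibiting $\Sigma$ as a direct factor of $\Gamma$ and contradicting irreducibility unless $\Sigma \in \{\emptyset, \Gamma\}$. Since $K \subseteq A_\Ge \subsetneq A_\Gamma$, this forces $K = \{1\}$, contradicting the non-triviality of $K$. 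The main obstacle is the technical step of promoting the extensibility argument past edges branching at internal vertices of $\pi_0$, which requires the more delicate combinatorial/vertex-group analysis alluded to above.
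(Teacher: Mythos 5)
Your overall strategy---work in the Bass--Serre tree, observe that pointwise stabilisers of geodesics are pairwise intersections of conjugates of $A_{\Ge}$ and hence parabolic, use \Cref{lem: bounded chain parabolics} to extract a minimal such stabiliser $K$, and derive a contradiction from the fact that an irreducible Artin group has no non-trivial proper normal parabolic subgroup---is close in spirit to the paper's argument, and your closing step (normal standard parabolics are products of direct factors, which you prove via roots in the Coxeter quotient rather than via the Artin monoid as in \Cref{Lem: no proper normal subgroups}) is fine. The genuine gap is exactly where you flag it: the claim that $K=\mathrm{Stab}(e_1)\cap\mathrm{Stab}(e_2)$ lies in the kernel of the action on $T$. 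Your extensibility argument only controls edges $f$ such that $f$ and $\pi_0$ lie on a common geodesic, i.e.\ edges in the two half-trees beyond the endpoints of $\pi_0$: for those, $\mathrm{Stab}(f)\cap\mathrm{Stab}(e_i)$ is a pairwise intersection contained in $K$, and minimality applies. For an edge $f$ branching off at an \emph{internal} vertex of $\pi_0$, the relevant subgroup is $K\cap\mathrm{Stab}(f)=\mathrm{Stab}(e_1)\cap\mathrm{Stab}(e_2)\cap\mathrm{Stab}(f)$, a \emph{triple} intersection of conjugates of $A_{\Ge}$; it is neither a member of your family $\mathcal{P}$ nor comparable with $K$, so the hypothesis (which concerns only pairwise intersections) and the minimality of $K$ give you nothing. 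Blufstein--Paris only tells you that $K$ is parabolic \emph{in} the adjacent edge or vertex stabilisers; it does not let you intersect $K$ with a further parabolic and stay parabolic---that would essentially require the Intersection Conjecture for the vertex groups, which is not assumed. The same obstruction reappears if you instead try to prove $K$ normal directly, since $K\cap K^g$ is a four-fold intersection of edge stabilisers.

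The paper sidesteps this by never looking at internal branches. Its Claim 2 extends a geodesic only at its endpoints, via a dichotomy: if the current stabiliser $H$ is not normal in the terminal vertex group, reflect the geodesic there and $H$ strictly drops; if it is normal, extend by one edge (keeping $H$), and then Claim 1 (a non-trivial parabolic subgroup of an edge stabiliser cannot be normal in \emph{both} endpoint vertex groups, a consequence of \Cref{cor: normal iff product} and irreducibility) guarantees non-normality at the new terminal vertex, so the next reflection strictly drops $H$. Every stabiliser encountered along the way is a pairwise intersection of edge stabilisers, so the hypothesis together with \Cref{lem: bounded chain parabolics} suffices to terminate at the trivial group. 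If you want to keep your minimality framing, you would need to replace ``$K$ fixes every edge of $T$'' with this endpoint-only extension scheme; as written, the passage from ``$K$ fixes the two outer half-trees'' to ``$K$ is normal in $A_\Gamma$'' does not go through.
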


In order to prove this result, we need the following characterisation of normal parabolic subgroups of Artin groups. We start with the irreducible case:

\begin{lemma} \label{Lem: no proper normal subgroups}
	Let $A_\Gamma$ be an irreducible Artin group. Then the only normal standard parabolic subgroups of $A_\Gamma$ are $A_\Gamma$ and the trivial subgroup. 
\end{lemma}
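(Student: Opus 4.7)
The plan is to argue by contradiction: suppose there is a standard parabolic $A_\Omega$ that is normal in $A_\Gamma$ with $\emptyset \subsetneq \Omega \subsetneq V(\Gamma)$, and deduce that $A_\Gamma$ must be reducible.

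Pick any $s \in V(\Gamma) \setminus \Omega$ and any $t \in \Omega$. Normality gives $sts^{-1} \in A_\Omega$, and visibly $sts^{-1} \in A_{\{s,t\}}$. Van der Lek's theorem on intersections of standard parabolics then yields
\[
sts^{-1} \in A_\Omega \cap A_{\{s,t\}} = A_{\Omega \cap \{s,t\}} = \langle t \rangle,
\]
so $sts^{-1} = t^k$ in $A_{\{s,t\}}$ for some $k \in \mathbb{Z}$. The next step is to show that this equation forces $m_{st} = 2$. If $s$ and $t$ are not joined by an edge then $A_{\{s,t\}}$ is free of rank two and the word $sts^{-1}t^{-k}$ is non-trivial for every $k$. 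If $m_{st}$ is finite, the abelianisation of $A_{\{s,t\}}$ immediately forces $k = 1$ (regardless of the parity of $m_{st}$), so $sts^{-1} = t$; but the relation $[s,t] = 1$ already fails in the Coxeter quotient $W_{\{s,t\}}$ whenever $m_{st} \geq 3$, leaving only $m_{st} = 2$.

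Hence every vertex of $V(\Gamma) \setminus \Omega$ commutes with every vertex of $\Omega$. Since both $\Omega$ and $V(\Gamma)\setminus\Omega$ are non-empty, this partition of $V(\Gamma)$ exhibits $A_\Gamma$ as the direct product $A_\Omega \times A_{V(\Gamma)\setminus\Omega}$, contradicting the irreducibility of $A_\Gamma$.

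The only substantive step is the two-generator calculation showing that $sts^{-1} = t^k$ implies $m_{st} = 2$, and I expect this to be the main (if minor) obstacle; everything else is a direct application of Van der Lek's theorem together with the definition of irreducibility.
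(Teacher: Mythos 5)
Your proof is correct and follows essentially the same route as the paper: both reduce to a two-generator computation via Van der Lek's theorem on intersections of standard parabolics and then rule out $sts^{-1}\in\langle t\rangle$ unless $m_{st}=2$. The only cosmetic differences are that you run the argument in contrapositive form (deducing reducibility at the end, rather than using connectivity of the Dynkin diagram to locate a single non-commuting pair up front) and that you settle the rank-two step via the abelianisation and the Coxeter quotient instead of the embedding of the Artin monoid.
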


\begin{proof}
	Let $ \varnothing \subsetneq \Gamma' \subsetneq \Gamma$ be a strict subgraph of $\Gamma$. Let $\Gamma_D, \Gamma_D'$ be the Dynkin diagrams  corresponding to the presentation graph $\Gamma, \Gamma'$ respectively (i.e. no edge if $m_{s,t}=2$, and edges with label $\infty$ for pairs $s, t$ that are not connected in $\Gamma$). Since $A_\Gamma$ is irreducible, $\Gamma_D$ is connected and $\Gamma_D'$ is a strict induced subgraph of  $\Gamma_D$. Thus, there exists an edge of $\Gamma_D$ connecting a vertex $s\in \Gamma_D'$ and $t \notin \Gamma_D'$. It follows from van der Lek that 
	$$A_{\Gamma'} \cap A_{\{s, t\}} = \langle s\rangle$$
	We can also compute that 
	$$tA_{\Gamma'}t^{-1} \cap A_{\{s, t\}}=t\big(A_{\Gamma'}\cap A_{\{s, t\}}\big)t^{-1}=t\langle s\rangle t^{-1}$$
	
	Thus in order to show that $tA_{\Gamma'}t^{-1} \neq A_{\Gamma'}$ it is sufficient to show that $t$ does not normalize $\langle s \rangle$. Suppose to the contrary that $t\langle s\rangle t^{-1}=\langle s\rangle$. Then $tst^{-1}\in \langle s\rangle$ and $ts=s^kt$ for some $k$. However the Artin monoid embeds in the Artin group, and the length of a monoid element is well-defined. Therefore, if $ts$ and $s^kt$ are two equal monoid elements then $k=1$ and $ts=st$. This contradicts our original assumption that $s,t$ are joined by an edge in the Dynkin Diagram. Thus we know that $t$ does not normalize $\langle s\rangle$ and $A_{\Gamma'}$ cannot be a normal subgroup.
\end{proof}

\begin{corollary} \label{cor: normal iff product}
	Let $A_\Gamma$ be an Artin group. Then a standard parabolic subgroup is normal if and only if it is a product of direct factors of $A_\Gamma$. 
\end{corollary}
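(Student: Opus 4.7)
The plan is to reduce to the irreducible case and then apply Lemma \ref{Lem: no proper normal subgroups}. The reverse direction is immediate: if $A_\Omega$ is a product of direct factors of $A_\Gamma$, then in the decomposition $A_\Gamma = A_\Omega \times A_{\Omega^c}$ (where $\Omega^c$ collects the complementary direct factors), the subgroup $A_\Omega$ is clearly normal.

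For the forward direction, I would begin by writing $A_\Gamma$ as the direct product of its irreducible components. Concretely, let $\Gamma_D$ be the Dynkin diagram of $\Gamma$ and let $\Gamma_1, \ldots, \Gamma_k$ be the vertex sets of its connected components, so that $A_\Gamma = A_{\Gamma_1} \times \cdots \times A_{\Gamma_k}$, and each $A_{\Gamma_i}$ is an irreducible Artin group. Given a normal standard parabolic $A_\Omega$, van der Lek's result on intersections of standard parabolics gives $A_\Omega \cap A_{\Gamma_i} = A_{\Omega \cap \Gamma_i}$, and since the generators split across the factors we have $A_\Omega = \prod_i A_{\Omega \cap \Gamma_i}$.

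The next step is to observe that each factor $A_{\Omega \cap \Gamma_i}$ is normal in $A_{\Gamma_i}$. For any $g \in A_{\Gamma_i}$, conjugation gives $g A_{\Omega \cap \Gamma_i} g^{-1} \subseteq A_{\Gamma_i}$ since both $g$ and $A_{\Omega \cap \Gamma_i}$ lie in $A_{\Gamma_i}$; and $g A_{\Omega \cap \Gamma_i} g^{-1} \subseteq g A_\Omega g^{-1} = A_\Omega$ by normality of $A_\Omega$ in $A_\Gamma$. Combining these inclusions with the van der Lek identity yields $g A_{\Omega \cap \Gamma_i} g^{-1} \subseteq A_{\Omega \cap \Gamma_i}$, and the same argument with $g^{-1}$ gives equality.

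Now Lemma \ref{Lem: no proper normal subgroups} applies to each irreducible factor: $A_{\Omega \cap \Gamma_i}$ must be either trivial or all of $A_{\Gamma_i}$, i.e.\ $\Omega \cap \Gamma_i$ is either $\varnothing$ or $\Gamma_i$. Hence $\Omega$ is a union of some of the $\Gamma_i$'s, which exhibits $A_\Omega$ as a product of direct factors of $A_\Gamma$. No step here looks technically difficult; the only subtlety is to remember that ``direct factor'' refers to the full irreducible decomposition of $A_\Gamma$, and to invoke van der Lek to make the intersection $A_\Omega \cap A_{\Gamma_i}$ behave as expected.
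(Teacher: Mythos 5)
Your proof is correct and follows essentially the same route as the paper: decompose $A_\Gamma$ into irreducible factors, observe that a normal standard parabolic intersects each factor in a normal standard parabolic, and apply Lemma~\ref{Lem: no proper normal subgroups} to each factor. You merely make explicit a couple of steps (the van der Lek identity and the verification that the intersection is normal) that the paper leaves implicit.
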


\begin{proof}
	Decompose $A_\Gamma$ as a product of irreducible Artin groups $A_\Gamma=A_{\Gamma_1} \times \dots \times A_{\Gamma_k}$. 
	If $A_{\Gamma'}$ is a product of groups of the form $A_{\Gamma'}=A_{\Gamma_{i_1}}\times \dots \times A_{\Gamma_{i_k}}$ then $A_{\Gamma'}$ is a direct factor of $A_\Gamma$ and so must be a normal subgroup. 
	
	Conversely, if $A_{\Gamma'}$ is normal in $A_\Gamma$ then for all $i$, $A_{\Gamma'}\cap A_{\Gamma_i}$ is normal in $A_{\Gamma_i}$. By Lemma \ref{Lem: no proper normal subgroups}, we see that $A_{\Gamma'}\cap A_{\Gamma_i}$ must be equal to either $A_{\Gamma_i}$ or to the trivial group. This implies that $A_{\Gamma'}$ is a direct product of factors of $A_\Gamma$. 
\end{proof}

\begin{proof}[Proof of Theorem~\ref{thm: ah splitting IP edge}]
	Let $A_\Gamma = A_{\Gamma_1} *_{A_{\Ge}} A_{\Gamma_2}$ be a visual splitting and let $T$ be the Bass--Serre tree of this splitting. 
	
	\medskip 
	
	\textbf{Claim 1:} Let $e$ be an edge of $T$ with vertices $v$ and $w$. The trivial subgroup is the only parabolic subgroup of $\mathrm{Stab}(e)$ that is normal in both $\mathrm{Stab}(v)$ and~$\mathrm{Stab}(w)$.
	
	\medskip
	
Up to conjugation, it is enough to show that the trivial subgroup is the only parabolic subgroup of $A_{\Ge}$ that is normal in both $A_{\Gamma_1}$ and $A_{\Gamma_2}$. Suppose to the contrary that $H$ is normal in both $A_{\Gamma_1}$ and $A_{\Gamma_2}$. Since every element of $A_\G$ is a product of elements of $A_{\Gamma_1}$ and $A_{\Gamma_2}$, $H$ is also normal in $A_\G$. By assumption, $A_\G$ is irreducible, so applying Corollary~\ref{cor: normal iff product} we conclude that $H$ must be trivial. This proves the claim.

	\medskip

	\textbf{Claim 2}: Let $\gamma$ be a geodesic segment of $T$, and let $\mathrm{Stab}_*(\gamma)$ be the pointwise stabiliser of $\gamma$. We claim that if $\mathrm{Stab}_*(\gamma)$ is non-trivial, then we can extend $\gamma$ to  a geodesic segment $\gamma' \supsetneq \gamma$  such that $\mathrm{Stab}_*(\gamma') \subsetneq \mathrm{Stab}_*(\gamma)$.
	
	\medskip

	Let $v_0, \ldots, v_n$ be the vertices of $\gamma$, and for every $1 \leq i \leq n$, let $e_i$ be the edge between $v_{i-1}$ and $v_i$.  The subgroup $H:= \mathrm{Stab}_*(\gamma) = \bigcap_{1 \leq i \leq n}\mathrm{Stab}(e_i)$ is a subgroup of $\mathrm{Stab}_*(e_n)$. Suppose that it is not trivial. There are two cases to consider: 
	\begin{itemize}
		\item If $H$ is not normal in $\mathrm{Stab}(v_n)$, then we pick an element $g \in \mathrm{Stab}(v_n)$ such that $H^g \neq H$. We then set
		$$\gamma' := \gamma \cup g\gamma$$
		and it follows that $\mathrm{Stab}_*(\gamma') = H \cap H^g \subsetneq H = \mathrm{Stab}_*(\gamma)$.
		\item If $H$ is normal in $\mathrm{Stab}(v_n)$, then we pick an element $g \in \mathrm{Stab}(v_n) - \mathrm{Stab}(e_n)$, and define $v_{n+1} := gv_{n-1}$, $e_{n+1} := ge_n$ and 
		$$\gamma' := \gamma \cup ge_n$$
		The pointwise stabilizer of $\gamma'$ is also equal to $H$ since
		$$H = H \cap H^g = \mathrm{Stab}_*(\gamma) \cap \mathrm{Stab}_*(g\gamma) \subseteq \mathrm{Stab}_*(\gamma') \subseteq \mathrm{Stab}_*(\gamma)=H.$$ Note that $\mathrm{Stab}_*(\gamma')$ is a parabolic subgroup of $A_\G$ by assumption on the splitting. Since it is contained in $\mathrm{Stab}_*(e_{n+1})$, it is a parabolic subgroup of $\mathrm{Stab}_*(e_{n+1})$ by Blufstein--Paris \cite{blufstein_parabolic_2023}. It follows from Claim 1 that $H = \mathrm{Stab}_*(\gamma')$ is not normal in~$\mathrm{Stab}(v_{n+1})$, so we can pick an element $g' \in \mathrm{Stab}(v_{n+1})$ such that $H^{g'} \neq H$. We then construct 
		$$\gamma'' := \gamma' \cup g'\gamma'$$
		and it follows that $\mathrm{Stab}_*(\gamma'') = H \cap H^{g'} \subsetneq H = \mathrm{Stab}_*(\gamma)$. This proves the claim.
	\end{itemize}
	
	\medskip

	By Claim 2, we can construct a sequence of geodesic segments $\gamma_0 \subsetneq \gamma_1 \subsetneq \cdots $, such that the sequence of stabilisers $\mathrm{Stab}_*(\gamma_i) $ strictly decreases as long as they are not trivial. By assumption on the splitting, we have that each $\mathrm{Stab}_*(\gamma_i)$ is a parabolic subgroup of $A_\G$. It now follows from Lemma~\ref{lem: bounded chain parabolics} that $\mathrm{Stab}_*(\gamma_i)$ becomes trivial after finitely many steps.

	We thus have a finite length geodesic segment $\gamma$  with trivial point stabilizer.  Let $e_1, \ldots, e_n$ be the edges of $\gamma$. Since $T$ is a tree, we have $\mathrm{Stab}_*(\gamma) = \mathrm{Stab}(e_1) \cap \mathrm{Stab}(e_n)$, and by assumption  $\mathrm{Stab}_*(\gamma) = \{1\}$. Since edge stabilisers are conjugates of $A_{\Ge}$ by construction, it follows that $A_{\Ge}$ is weakly malnormal in $A_{\Gamma}$,  hence $A_\Gamma$ is acylindrically hyperbolic by Theorem~\ref{thm: M-O}. 
\end{proof}

\subsection{Applications to some classes of Artin groups.} 

Recall that $A_\G$ is even FC-type if all edge labels in $\G$ are even, and all cliques in $\G$ generate spherical-type parabolics.

\begin{theorem}\label{thm: even FC-type}  Even FC-type Artin groups satisfy the Acylindrical Hyperbolicity Conjecture. 
\end{theorem}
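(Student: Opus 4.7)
The plan is to apply Theorem~\ref{thm: ah splitting IP edge} directly, using the fact that even FC type Artin groups are known to satisfy the Intersection Conjecture. Let $A_\G$ be an irreducible even FC type Artin group; our goal is to show that $A_\G/Z(A_\G)$ is acylindrically hyperbolic.

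First I would split into two cases according to whether $A_\G$ is of spherical type. If $A_\G$ is spherical type, then the conclusion follows directly from the theorem of Calvez--Wiest \cite{CalvezWiest2017Acylindrical} cited in Section~\ref{sec:previous results}. Otherwise, $A_\G$ is of infinite type. Since $A_\G$ is FC type, every clique of $\G$ generates a spherical-type parabolic subgroup; hence the hypothesis that $A_\G$ is not spherical type forces $\G$ not to be a clique. As recorded in the discussion of visual splittings, this implies that $A_\G$ admits a non-trivial visual splitting $A_\G = A_{\G_1} *_{A_\Ge} A_{\G_2}$.

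Next I would invoke the Intersection Conjecture for even FC type Artin groups, established by Antolín--Foniqi \cite{antolin_subgroups_2023} and quoted in Section~\ref{sec:previous results}. This guarantees in particular that the intersection of any two conjugates of the edge group $A_\Ge$ is a parabolic subgroup of $A_\G$, which is exactly the hypothesis of Theorem~\ref{thm: ah splitting IP edge}. Applying that theorem yields that $A_\Ge$ is weakly malnormal in $A_\G$ and that $A_\G$ itself is acylindrically hyperbolic.

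Finally, I would pass from acylindrical hyperbolicity of $A_\G$ to that of $A_\G / Z(A_\G)$. Because acylindrically hyperbolic groups have finite center, $Z(A_\G)$ is a finite normal subgroup, and quotienting by a finite normal subgroup preserves acylindrical hyperbolicity. (In fact, FC type Artin groups are torsion-free by the work of Charney--Davis, so $Z(A_\G)$ is trivial in this case and the quotient is just $A_\G$ itself.) Each of these steps is essentially an application of results already available; there is no real obstacle beyond verifying that the visual splitting hypothesis of Theorem~\ref{thm: ah splitting IP edge} is automatically satisfied for infinite-type even FC groups, which follows from the definition of FC type.
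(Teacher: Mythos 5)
Your argument is correct and follows essentially the same route as the paper: split into the spherical case (Calvez--Wiest) and the non-spherical case, where FC-type forces a visual splitting and the Intersection Conjecture for even FC-type groups \cite{antolin_subgroups_2023} supplies the hypothesis of Theorem~\ref{thm: ah splitting IP edge}. Your closing remark about passing to the central quotient is a reasonable extra precaution that the paper leaves implicit, but it does not change the substance of the proof.
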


\begin{proof} Assume that $A_\Gamma$ is irreducible. If $A_\Gamma$ is spherical-type, this follows from Calvez--Wiest \cite{CalvezWiest2017Acylindrical}. 
 If $A_\Gamma$ is not spherical-type, then it admits a visual splitting. Since even FC-type Artin groups satisfy the Intersection Property by \cite{antolin_subgroups_2023}, the intersection of any two conjugates of the edge group for this splitting must be parabolic and so by \Cref{thm: ah splitting IP edge}, $A_\G$ is acylindrically hyperbolic.
\end{proof}

For general FC-type Artin groups, we cannot directly apply Theorem~\ref{thm: ah splitting IP edge} as we do not know that they satisfy the Intersection Conjecture. We therefore ask the following: 

\begin{question}
	Do FC-type Artin groups satisfy the Intersection Conjecture? 
\end{question}

We can nonetheless prove acylindrical hyperbolicity under more restrictive conditions.

\begin{theorem}\label{thm: AH splitting over sph via Deligne}
	Let $A_\Gamma$ be an irreducible Artin group such that the Deligne complex $D_\G$ admits a piecewise Euclidean CAT(0) metric. If $A_\G$ splits visually over a spherical-type parabolic, then $A_\Gamma$ is acylindrically hyperbolic.
\end{theorem}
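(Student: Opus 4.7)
The plan is to deduce this result directly from Theorem~A (\Cref{thm: ah splitting IP edge}) by verifying its hypothesis on intersections of conjugates of the edge group. The verification will come from applying \Cref{lem: IP via Deligne} to the Deligne complex $D_\Gamma$ itself.

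First I would observe that since $A_\Gamma$ admits a visual splitting, $\Gamma$ is not a clique; since every irreducible spherical-type Artin group has a complete presentation graph (once label-$2$ edges are included), this forces $A_\Gamma$ to be of infinite type, so $D_\Gamma$ is defined as in \Cref{Background}. By hypothesis, $D_\Gamma$ carries a piecewise Euclidean CAT(0) metric. The standard action of $A_\Gamma$ on $D_\Gamma$ is without inversions, and the stabilizer of the cell $[aA_T, aA_{T'}]$ is the parabolic subgroup $aA_T a^{-1}$; in particular, every cell stabilizer is a (conjugate of a) spherical-type standard parabolic subgroup, and vertex stabilizers are exactly the conjugates of the $A_T$ with $T$ spherical-type.

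Next, I would apply \Cref{lem: IP via Deligne} to $X = D_\Gamma$. The hypothesis requires that each vertex stabilizer of $X$ satisfies the Intersection Conjecture; since vertex stabilizers are spherical-type Artin groups and those satisfy the Intersection Conjecture by \cite{CumplidoGebhardtGonzalesMenesesWiest2017}, the hypothesis holds. The conclusion is that the intersection of any two cell stabilizers of $D_\Gamma$ is again a parabolic subgroup of $A_\Gamma$.

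Finally, write the visual splitting as $A_\Gamma = A_{\Gamma_1} *_{A_{\Ge}} A_{\Gamma_2}$ with $A_{\Ge}$ spherical-type. Then $A_{\Ge}$ is the stabilizer of the vertex of $D_\Gamma$ corresponding to the coset $A_{\Ge}$, and more generally every conjugate $g A_{\Ge} g^{-1}$ stabilizes the vertex $gA_{\Ge}$. In particular the conjugates of $A_{\Ge}$ all lie in the collection $\mathcal{P}$ of \Cref{lem: IP via Deligne}, so the intersection of any two such conjugates is a parabolic subgroup of $A_\Gamma$. This is precisely the hypothesis of Theorem~A (\Cref{thm: ah splitting IP edge}), which then yields the acylindrical hyperbolicity of $A_\Gamma$. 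I do not anticipate any serious obstacle; the proof is essentially a matter of recognizing that the Deligne complex is a natural host for \Cref{lem: IP via Deligne} once a CAT(0) metric is available, and that spherical-type edge groups are automatically vertex stabilizers there.
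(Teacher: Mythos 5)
Your proposal is correct and follows essentially the same route as the paper: apply \Cref{lem: IP via Deligne} to the CAT(0) Deligne complex, whose cell stabilisers are exactly the spherical-type parabolics satisfying the Intersection Conjecture by \cite{CumplidoGebhardtGonzalesMenesesWiest2017}, and then invoke Theorem~\ref{thm: ah splitting IP edge}. The paper's proof is just a more compressed version of the same argument.
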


\begin{proof}
	Stabilisers of simplices in the Deligne complex are precisely the spherical-type parabolic subgroups. Since spherical-type Artin groups satisfy the Intersection Conjecture ~\cite{CumplidoGebhardtGonzalesMenesesWiest2017}, it follows from Proposition~\ref{lem: IP via Deligne} that the intersection of any two spherical-type parabolic subgroups of $A_\Gamma$ is again a parabolic subgroup. In particular, the splitting satisfies the hypothesis of Theorem~\ref{thm: ah splitting IP edge}, and the result follows.
\end{proof}

In particular,  Theorem~\ref{thm: AH splitting over sph via Deligne} applies to FC-type Artin groups, locally reducible Artin groups, and certain Artin groups of dimension $3$ (namely those for which all the irreducible three-dimensional parabolic subgroups are isomorphic to the braid group $B_4$), whenever they split visually over a spherical-type parabolic.

\subsection{A weaker version of the Intersection Property} \label{Sec:weaker IP} In Theorem~\ref{thm: ah splitting IP edge}, the condition on the intersections of conjugates of the edge group $A_{\Ge}$ is \textit{a priori} weaker than requiring that $A_\Gamma$ satisfies the Intersection Conjecture. In this section, we show how to obtain this weaker condition in cases where the Intersection Conjecture may not be known for $A_\Gamma$. 
 We start with the following observation.

\begin{theorem}\label{thm: AH with IP for both vertex groups}
	Let $A_\Gamma$ be an irreducible Artin group with a visual splitting $A_\Gamma = A_{\Gamma_1} *_{A_{\Ge}} A_{\Gamma_2}$. If both $A_{\Gamma_1}$ and $A_{\Gamma_2}$ satisfy the Intersection Conjecture, then the intersection of any two conjugates of $A_\Omega$ is again a parabolic subgroup of $A_\Gamma$. 
	In particular, $A_\Gamma$ is acylindrically hyperbolic. 
\end{theorem}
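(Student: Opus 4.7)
The plan is to reduce to Theorem~\ref{thm: ah splitting IP edge}: once we show that the intersection of any two conjugates of $A_{\Ge}$ is a parabolic subgroup of $A_\Gamma$, the acylindrical hyperbolicity will follow immediately. The strategy is to work in the Bass--Serre tree $T$ associated to the visual splitting. Two conjugates $P_1 = g_1 A_{\Ge} g_1^{-1}$ and $P_2 = g_2 A_{\Ge} g_2^{-1}$ are precisely the stabilisers of two edges $e_1, e_2$ of $T$, and because $T$ is a tree and the action is without inversions, $P_1 \cap P_2$ equals the pointwise stabiliser of the minimal subtree of $T$ containing $e_1 \cup e_2$, namely the edge-path joining them.

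Next I would set up notation: let $v_0, v_1, \ldots, v_m$ be the consecutive vertices of this path, chosen so that $e_1 = [v_0, v_1]$ and $e_2 = [v_{m-1}, v_m]$, and write $f_k := [v_{k-1}, v_k]$ for $1 \leq k \leq m$. Each edge stabiliser $\mathrm{Stab}(f_k)$ is conjugate to $A_{\Ge}$, each vertex stabiliser $\mathrm{Stab}(v_k)$ is conjugate to either $A_{\Gamma_1}$ or $A_{\Gamma_2}$, and the no-inversion property of the action yields $\mathrm{Stab}(v_{k-1}) \cap \mathrm{Stab}(v_k) = \mathrm{Stab}(f_k)$. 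The core of the argument is an induction on $k$ proving the following strengthened statement: the subgroup $H_k := \bigcap_{i=0}^{k} \mathrm{Stab}(v_i)$ is a parabolic subgroup of $\mathrm{Stab}(v_k)$.

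The base case $k=0$ is trivial. For the inductive step, observe that $H_k = H_{k-1} \cap \mathrm{Stab}(f_k)$ (using $H_{k-1} \subseteq \mathrm{Stab}(v_{k-1})$). Both factors are parabolic subgroups of the vertex group $\mathrm{Stab}(v_{k-1})$: the first by the inductive hypothesis, and the second because $\mathrm{Stab}(f_k)$ is a conjugate of $A_{\Ge}$ by an element of $\mathrm{Stab}(v_{k-1})$, hence a parabolic of that vertex group. Since the Intersection Conjecture holds in $\mathrm{Stab}(v_{k-1})$ by hypothesis, $H_k$ is parabolic in $\mathrm{Stab}(v_{k-1})$. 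To conclude the induction I would transfer this across the edge $f_k$: Blufstein--Paris~\cite{blufstein_parabolic_2023} applied to $H_k \subseteq \mathrm{Stab}(f_k) \subseteq \mathrm{Stab}(v_{k-1})$ gives $H_k$ parabolic in $\mathrm{Stab}(f_k)$, and a direct check from definitions shows that a parabolic subgroup of $\mathrm{Stab}(f_k)$ is automatically parabolic in $\mathrm{Stab}(v_k)$ (the conjugating element sits in $\mathrm{Stab}(f_k) \subseteq \mathrm{Stab}(v_k)$, and the defining subgraph lies inside the one for $\mathrm{Stab}(v_k)$).

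Taking $k=m$ yields that $P_1 \cap P_2 = H_m$ is parabolic in $\mathrm{Stab}(v_m)$, which itself is parabolic in $A_\Gamma$, so $P_1 \cap P_2$ is parabolic in $A_\Gamma$; Theorem~\ref{thm: ah splitting IP edge} then completes the proof. The main subtlety I expect is the bookkeeping in the inductive step---carefully tracking whether a given subgroup is parabolic in the ambient group, in the current vertex group, or in the traversed edge group---and the key technical tool that allows us to slide this information along the path is the Blufstein--Paris result that a parabolic of a parabolic is again a parabolic. No new geometric input beyond the action on the Bass--Serre tree is needed.
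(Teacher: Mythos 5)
Your argument is correct and is essentially the paper's: the paper simply cites Proposition~\ref{lem: IP via Deligne} applied to the action on the Bass--Serre tree, and your induction along the edge-path (using Blufstein--Paris to slide parabolicity across each edge group and the Intersection Conjecture in each vertex group) is exactly the content of that proposition's proof, specialised to a tree. The only difference is that you inline the argument rather than invoking the proposition.
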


\begin{proof}
	Since $ A_{\Gamma_1}$ and $ A_{\Gamma_2}$ both satisfy the Intersection Conjecture, the fact that the intersection of any two conjugates of $A_\Omega$ is again a parabolic subgroup of $A_\Gamma$ is a direct consequence of Proposition~\ref{lem: IP via Deligne} applied to the action of $A_\G$ on the (CAT(0)) Bass--Serre tree of the splitting $ A_{\Gamma_1} *_{A_{\Ge}} A_{\Gamma_2}$.
\end{proof}

In the rest of this section, we show how one may understand the intersection of two conjugates of $A_\Ge$ even when the vertex groups are not known to satisfy the Intersection Conjecture. As with the original proofs of the Intersection Property for some families of Artin groups (see \Cref{sec:previous results}), the idea is to realise the conjugates of $A_\Ge$ as stabilisers in a suitable CAT(0) complex and apply Proposition~\ref{lem: IP via Deligne}. We recall a relevant framework of Godelle--Paris to construct such complexes \cite{GodelleParis2012KPi1}.

\begin{definition}
	A \textbf{complete cover} $\mathcal{U}$ of $\Gamma$ is a collection of induced subgraphs of $\Gamma$ that contains every edge of $\Gamma$ and is stable under taking induced subgraphs (including the empty graph). 
	Given an element $\Gamma' \in\mathcal{U}$, the corresponding standard parabolic subgroup $A_{\Gamma'}$ is called a \textbf{$\mathcal{U}$-standard parabolic subgroup}, and a conjugate of $A_{\Gamma'}$ is called a~\textbf{$\mathcal{U}$-parabolic subgroup}.

	Given a complete cover $\mathcal{U}$, one defines the corresponding \textbf{Godelle--Paris cube complex} $X_{\mathcal{U}}$ as follows: Vertices of $X_{\mathcal{U}}$ correspond to cosets of $\mathcal{U}$-standard parabolic subgroups, and cubes correspond to the intervals (for the inclusion) between $gA_{\Gamma_1}$ and $gA_{\Gamma_2}$, whenever $g \in A_\Gamma$ and   $\Gamma_1 \subset \Gamma_2$ are in $\mathcal{U}$. 
	
	Note that $A_\Gamma$ acts on $X_{\mathcal{U}}$ by left multiplication on left cosets. This action is cocompact and without inversion.
\end{definition}

For instance, if $A_\Gamma$ is FC-type and $\mathcal{U}$ consists of all cliques of $\Gamma$, we recover the cubical Deligne complex of Charney--Davis \cite{CharneyDavis1995}. More generally,  
if $\mathcal{U}$ consists of all the cliques of $\Gamma$, one recovers the Godelle--Paris clique complex \cite{GodelleParis2012KPi1}. 

There is a complete characterisation of when  the standard cubical metric on this complex is CAT(0). We need the following definition: 

\begin{definition}
	Let $\mathcal{U}$ be a complete cover of $\Gamma$. We define a simplicial complex $L_{\mathcal{U}}$ as follows: The vertices of $L_{\mathcal{U}}$ are the vertices of $\Gamma$, and a set of vertices $v_0, \ldots, v_k$ of $L_{\mathcal{U}}$ span a $k$-simplex of $L_{\mathcal{U}}$ if and only if the induced subgraph of $\Gamma$ spanned by $v_0, \ldots, v_k$ belongs to $\mathcal{U}$. 
\end{definition}

Note that $L_{\mathcal{U}}$ is isomorphic to the link of any vertex of $X_{\mathcal{U}}$ corresponding to the empty subgraph of $\Gamma$. 

\begin{theorem}[\cite{GodelleParis2012KPi1} Thm 4.2] \label{thm: Godelle Paris flag}
	Let $\mathcal{U}$ be a complete cover of $\Gamma$. 
	Then $X_{\mathcal{U}}$ is a CAT(0) cube complex if and only if $L_{\mathcal{U}}$ is a flag simplicial complex. 
\end{theorem}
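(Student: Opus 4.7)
The plan is to invoke Gromov's classical criterion: a cube complex is CAT(0) if and only if it is simply connected and the link of every vertex is a flag simplicial complex. The theorem then reduces to establishing (i) that $X_{\mathcal{U}}$ is always simply connected, independently of $L_{\mathcal{U}}$, and (ii) that every vertex link of $X_{\mathcal{U}}$ is flag precisely when $L_{\mathcal{U}}$ is.

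For the link computation, since $A_\Gamma$ acts by cubical automorphisms and transitively on vertices of each type, it suffices to examine the link at a canonical vertex $1 \cdot A_{\Gamma'}$ for each $\Gamma' \in \mathcal{U}$. Cubes meeting this vertex split naturally into those above and those below in the inclusion poset, producing a spherical join decomposition
$$\mathrm{lk}(A_{\Gamma'}) \cong \mathrm{Up}(\Gamma') * \mathrm{Down}(\Gamma').$$
The up-link $\mathrm{Up}(\Gamma')$ has a vertex for each $s \in V(\Gamma)\setminus \Gamma'$ with $\Gamma' \cup \{s\} \in \mathcal{U}$, and a simplex on $\{s_0, \dots, s_k\}$ whenever $\Gamma' \cup \{s_0, \dots, s_k\} \in \mathcal{U}$; this is a full subcomplex of $L_{\mathcal{U}}$. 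For $\Gamma' = \emptyset$ it is exactly $L_{\mathcal{U}}$, which gives the ``only if'' direction, and for general $\Gamma'$ flagness of $L_{\mathcal{U}}$ passes to $\mathrm{Up}(\Gamma')$ since full subcomplexes of flag complexes are flag. The down-link $\mathrm{Down}(\Gamma')$ records the cosets of standard parabolics of $A_{\Gamma'}$ that sit strictly below $A_{\Gamma'}$, and its flagness is essentially formal: for distinct $s, t \in \Gamma'$ the cosets $hA_{\Gamma'\setminus\{s\}}$ and $h'A_{\Gamma'\setminus\{t\}}$ inside $A_{\Gamma'}$ that share an edge with $A_{\Gamma'}$ always combine into a common square. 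A join of flag complexes is flag, so combining both pieces yields the link condition.

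For simple connectivity of $X_{\mathcal{U}}$, the idea is to read off $\pi_1$ from the $A_\Gamma$-action. The $1$-skeleton of $X_{\mathcal{U}}$ is a kind of Cayley graph for $A_\Gamma$ with respect to the $\mathcal{U}$-parabolic generating set, and the $2$-cubes corresponding to pairs $\{s,t\} \in \mathcal{U}$ realise Artin-type commutations of the associated cosets. Every loop in $X_{\mathcal{U}}$ translates into a word in $A_\Gamma$ which evaluates to the identity; such a word can be reduced via the defining relations of $A_\Gamma$, and each such relation is filled in by a suitable cube in $X_{\mathcal{U}}$. Equivalently, one computes $\pi_1(A_\Gamma \backslash X_{\mathcal{U}})$ via the naturally occurring complex of stabiliser groups and checks that it matches the presentation of $A_\Gamma$, forcing the ``cover'' $X_{\mathcal{U}}$ to be simply connected.

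The main obstacle I expect is the flagness of $\mathrm{Down}(\Gamma')$. The upward side is a tautological restriction of the given $L_{\mathcal{U}}$ and poses no difficulty, but the downward side involves infinite collections of cosets and one must rule out ``phantom'' simplices coming from collections $hA_{\Gamma'\setminus\{s_0,\dots,s_k\}}$ that are pairwise compatible with $A_{\Gamma'}$ but fail to be jointly compatible. The resolution, I expect, is to show that $\mathrm{Down}(\Gamma')$ is in fact a spherical join of discrete factors indexed by the elements of $\Gamma'$, so that no high-dimensional simplices appear beyond those forced by the join structure; this is a purely combinatorial statement about cosets in standard parabolic subgroups of $A_{\Gamma'}$, and it is where the Godelle--Paris argument performs the genuine work of the proof.
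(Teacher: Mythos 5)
First, note that the paper does not prove this statement: it is quoted from Godelle--Paris \cite{GodelleParis2012KPi1}, so there is no in-paper proof to compare against. Measured against the actual Godelle--Paris argument, your skeleton is the right one: Gromov's criterion, the decomposition of each vertex link as the spherical join of an ascending and a descending link, the observation that the link of a vertex $gA_{\varnothing}$ is exactly $L_{\mathcal{U}}$ (which gives the ``only if'' direction), and the fact that flagness of $L_{\mathcal{U}}$ passes to every ascending link because $\mathrm{Up}(\Gamma')$ is the link of the simplex $\Gamma'$ in $L_{\mathcal{U}}$. Your treatment of simple connectivity is only a sketch, but the idea is the correct one; the point you should make explicit is that the hypothesis ``$\mathcal{U}$ contains every edge of $\Gamma$'' is exactly what guarantees that every defining relator of $A_\Gamma$ is carried by a local group, so that $X_{\mathcal{U}}$ is the development of a simple complex of groups with fundamental group $A_\Gamma$ and is therefore connected and simply connected.

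The genuine gap is the descending link, and your proposed resolution is false. You claim that $\mathrm{Down}(\Gamma')$ is a spherical join of discrete factors indexed by the elements of $\Gamma'$, equivalently that any two cosets $hA_{\Gamma'\setminus\{s\}}$ and $h'A_{\Gamma'\setminus\{t\}}$ with $s\neq t$ ``always combine into a common square''. Two such vertices span an edge of the descending link if and only if the cosets intersect, and this fails already for $\Gamma'=\{s,t\}$: completeness of the bipartite graph would force $A_{\Gamma'}=\langle s\rangle\langle t\rangle$, which is false for a free group or for $B_3$. So the descending link is a genuine (non-join) multipartite complex, and its flagness is exactly the Helly property for cosets of the corank-one standard parabolic subgroups of $A_{\Gamma'}$: if the cosets $h_0A_{\Gamma'\setminus\{s_0\}},\dots,h_kA_{\Gamma'\setminus\{s_k\}}$ pairwise intersect, they must have a common element. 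This is a substantive theorem about Artin groups, not a formal statement about cosets in an arbitrary group; it is where Godelle--Paris do the real work, building on van der Lek's description of intersections of cosets of standard parabolic subgroups (an intersection $gA_X\cap hA_Y$ is empty or a coset of $A_{X\cap Y}$) together with a careful induction. Without supplying that argument, your proof establishes the ``only if'' direction and the ascending-link half of the ``if'' direction, but not the theorem.
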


\begin{corollary}\label{cor: IP with Godelle Paris}
	Let $A_\Gamma$ be an Artin group. Let $\mathcal{U}$ be a complete cover of $\Gamma$. Assume that: 
	\begin{itemize}
		\item $L_{\mathcal{U}}$ is a flag simplicial complex,
		\item for every  $\Gamma'$ in  $\mathcal{U}$, the standard parabolic $A_{\Gamma'}$ satisfies the Intersection Conjecture. 
	\end{itemize}
Then the intersection of any two $\mathcal{U}$-parabolic subgroups is again a parabolic subgroup of~$A_\Gamma$. 
\end{corollary}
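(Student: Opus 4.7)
The plan is to apply Proposition~\ref{lem: IP via Deligne} directly to the action of $A_\Gamma$ on the Godelle--Paris cube complex $X_{\mathcal{U}}$ itself. The whole argument is essentially a matter of checking that each hypothesis of that proposition is satisfied in this setting.

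First I would use Theorem~\ref{thm: Godelle Paris flag}: since $L_{\mathcal{U}}$ is flag by assumption, $X_{\mathcal{U}}$ endowed with its standard cubical metric is a CAT(0) cube complex, and in particular a piecewise Euclidean CAT(0) polyhedral complex as required. Next I would verify that the action of $A_\Gamma$ is without inversions and that every cell stabilizer is a parabolic subgroup. A cube $C$ of $X_{\mathcal{U}}$ corresponds to an interval $[gA_{\Gamma_1}, gA_{\Gamma_2}]$ in the poset of $\mathcal{U}$-cosets, with $\Gamma_1 \subsetneq \Gamma_2$ in $\mathcal{U}$. Since left multiplication preserves the inclusion relation between cosets, any element stabilizing $C$ setwise must preserve the unique minimum and maximum of this interval, so it belongs to $gA_{\Gamma_1}g^{-1}$. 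Such an element then lies in $gA_{\Gamma''}g^{-1}$ for every intermediate $\Gamma_1 \subseteq \Gamma'' \subseteq \Gamma_2$, hence fixes every vertex of $C$, and so fixes $C$ pointwise. In particular the pointwise stabilizer of $C$ equals $gA_{\Gamma_1}g^{-1}$, which is a $\mathcal{U}$-parabolic, hence a parabolic subgroup of $A_\Gamma$.

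Finally, since vertex stabilizers of $X_{\mathcal{U}}$ are precisely the $\mathcal{U}$-parabolic subgroups of $A_\Gamma$, and since satisfying the Intersection Conjecture is invariant under conjugation within $A_\Gamma$, the second hypothesis of the corollary says exactly that every vertex stabilizer of $X_{\mathcal{U}}$ satisfies the Intersection Conjecture. Proposition~\ref{lem: IP via Deligne} therefore applies, and its conclusion says that the intersection of any two elements of the collection $\mathcal{P}$ of cell stabilizers is a parabolic subgroup of $A_\Gamma$. Observing that every $\mathcal{U}$-parabolic subgroup arises as the stabilizer of some vertex of $X_{\mathcal{U}}$ (and hence lies in $\mathcal{P}$) gives the desired conclusion.

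The only step requiring any genuine care is the no-inversions check, which is where one really uses that cubes come from intervals in a poset preserved by the $A_\Gamma$-action; all other points are essentially bookkeeping from the definition of $X_{\mathcal{U}}$ and a direct appeal to Theorem~\ref{thm: Godelle Paris flag} and Proposition~\ref{lem: IP via Deligne}.
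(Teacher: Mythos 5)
Your proof is correct and follows exactly the paper's route: invoke Theorem~\ref{thm: Godelle Paris flag} to get that $X_{\mathcal{U}}$ is CAT(0), check that the action is without inversions with cell stabilizers the $\mathcal{U}$-parabolics, and apply Proposition~\ref{lem: IP via Deligne}. You simply spell out the no-inversions and stabilizer computations that the paper dismisses with ``by construction''.
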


\begin{proof}
	By Theorem~\ref{thm: Godelle Paris flag}, we have that $X_{\mathcal{U}}$ is a CAT(0) cube complex. By construction, the action is without inversion and the stabilisers of cubes are precisely the $\mathcal{U}$-parabolic subgroups of $A_\Gamma$. Thus, the result follows from Proposition~\ref{lem: IP via Deligne}.
\end{proof}

In particular, to show that the intersection of two conjugates of a standard parabolic $A_{\Ge}$ is again a parabolic subgroup, it is enough to include $\Ge$ in a suitable complete cover of $\Gamma$. We now give an example of a geometric condition on $\Ge$ that guarantees  that such a cover exists.

\begin{definition}
	We say that an induced subgraph $\Ge$ is \textbf{$2$-convex} in $\Gamma$ if every geodesic path of $\Gamma$ of length $2$ with endpoints in $\Ge$ is contained in $\Ge$. 
\end{definition}

\begin{lemma}
	Let $\Gamma$ be a simplicial graph, and let $\Ge$ be a $2$-convex subgraph of $\Gamma$. Let $\mathcal{U}$ be the complete cover consisting of all the cliques of $\Gamma$ and all the induced subgraphs of $\Ge$.  Then the simplicial complex $L_\mathcal{U}$ is flag.
\end{lemma}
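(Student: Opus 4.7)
The plan is to unpack the definition of flagness for $L_{\mathcal{U}}$ and reduce it, via a short case analysis, to a direct application of $2$-convexity. First, I would analyse what the $1$-skeleton of $L_{\mathcal{U}}$ looks like. Given two vertices $u, v$ of $\Gamma$, the induced subgraph on $\{u,v\}$ is either a single edge (if $u, v$ are adjacent in $\Gamma$) or a pair of isolated vertices. In the first case it is a clique and hence lies in $\mathcal{U}$; in the second case it lies in $\mathcal{U}$ precisely when both $u$ and $v$ belong to $V(\Omega)$. So an edge of $L_{\mathcal{U}}$ between $u$ and $v$ is exactly the condition that either $u,v$ are adjacent in $\Gamma$, or both belong to $V(\Omega)$.

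Next I would take a set $\{v_0, \ldots, v_k\}$ of vertices that is pairwise connected in $L_{\mathcal{U}}$ and partition it into $A := \{v_i : v_i \in V(\Omega)\}$ and $B := \{v_i : v_i \notin V(\Omega)\}$. The pairwise edge condition immediately implies that any vertex of $B$ is adjacent in $\Gamma$ to every other vertex of $\{v_0, \ldots, v_k\}$, since such pairs do not lie jointly in $\Omega$. The only pairs whose adjacency in $\Gamma$ is not forced are those inside $A$. I would then split into two cases: if $B = \emptyset$, the set lies in $V(\Omega)$, its induced subgraph is an induced subgraph of $\Omega$, and therefore belongs to $\mathcal{U}$ by definition of the cover, so $\{v_0, \ldots, v_k\}$ spans a simplex.

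The main (and only substantive) step is the case $B \neq \emptyset$, where I will show the induced subgraph on $\{v_0, \ldots, v_k\}$ is in fact a clique. Pick any $v_l \in B$ and any two vertices $v_i, v_j \in A$. Both $v_i$ and $v_j$ are adjacent to $v_l$ in $\Gamma$ by the previous paragraph. If $v_i$ and $v_j$ were not themselves adjacent in $\Gamma$, then $v_i$–$v_l$–$v_j$ would be a geodesic path of length $2$ with endpoints in $\Omega$, and $2$-convexity of $\Omega$ would force $v_l \in V(\Omega)$, contradicting $v_l \in B$. Hence all pairs in $A$ are adjacent in $\Gamma$ as well, and the induced subgraph on $\{v_0, \ldots, v_k\}$ is a clique of $\Gamma$. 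Cliques belong to $\mathcal{U}$, so the set spans a simplex of $L_{\mathcal{U}}$, proving flagness.

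The proof is almost entirely bookkeeping; I do not expect a real obstacle. The single point that requires the hypothesis is the ``clique'' conclusion when $B \neq \emptyset$, and this is exactly what $2$-convexity is designed to give. The stability of $\mathcal{U}$ under taking induced subgraphs is what guarantees the two base cases (induced subgraph of $\Omega$, or clique of $\Gamma$) both land back in $\mathcal{U}$.
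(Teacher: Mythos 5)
Your proof is correct and follows essentially the same route as the paper's: the partition into $A$ and $B$ is exactly the paper's decomposition $T = T_1 \cup T_2$ into vertices inside and outside $\Omega$, and the key step — producing a geodesic $v_i$--$v_l$--$v_j$ of length $2$ and invoking $2$-convexity to force $v_l \in \Omega$ — is identical. No gaps.
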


\begin{proof}
	Let $T \subset V(\Gamma)$ be a set of vertices that are pairwise connected by edges in $L_{\mathcal{U}}$. To show that $T$ spans a simplex of $L_\mathcal{U}$, it is enough to show that either $T \subset \Ge$ or the vertices of $T$ span a clique of $\Gamma$. 
	
	We can thus assume that $T$ is not contained in $\Ge$, and let us show that $T$ spans a clique of $\Gamma$. We decompose $T$ as a disjoint union $T = T_1 \cup T_2$, where $T_1 := T \cap V(\Ge) $ and $T_2 := T - T_1$. Note that by construction of $\mathcal{U}$, two vertices of $\Gamma$ are adjacent in $L_\mathcal{U}$ if and only if they are both contained in $\Ge$ or they are connected by an edge of $\Gamma$. By assumption, any two vertices of $T$ are adjacent in $L_{\mathcal{U}}$,  thus, for every $t \in T_2$ and $t' \in T - \{t\}$, we have that $t$ and $t'$ are connected by an edge in $\G$.
	
	It remains to show that any two vertices $t \neq t'\in T_1$ are connected by an edge of $\Gamma$. Suppose by contradiction that there exists a pair $t, t' \in T_1$ that is not connected by an edge of $\Gamma$. Since $T_2$ is not empty, we can pick an element $s \in T_2$ and the previous argument shows that $t, s, t'$ forms a path in $\Gamma$. Since $t, t'$ are not adjacent in $\Gamma$, this path is geodesic. By $2$-convexity of $\Ge$, we get that $s \in T_1$, a contradiction. Thus, the vertices of $T_1$ span a simplex of $\Gamma$, and it now follows that $T$ spans a clique of $\Gamma$, and hence spans a simplex in $L_\mathcal{U}$. 
\end{proof}

The following is now a direct consequence of Corollary~\ref{cor: IP with Godelle Paris}:

\begin{corollary}\label{cor: IP 2convex}
	Let $A_\Gamma$ be an Artin group, and let $\Ge$ be a $2$-convex subgraph of $\Gamma$. Assume that $A_{\Ge}$ as well as every clique standard parabolic subgroup of $A_\Gamma$ satisfy the Intersection Conjecture.	Then the intersection of any two conjugates of $A_{\Ge}$ is again a parabolic subgroup of $A_\Gamma$. 
\end{corollary}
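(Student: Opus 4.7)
The plan is to recognise that this corollary is an immediate combination of the preceding lemma with Corollary~\ref{cor: IP with Godelle Paris}, once one sets up the right complete cover and checks that the Intersection Conjecture hypothesis propagates down to every standard parabolic in that cover.

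First I would form the candidate complete cover
$$\mathcal{U} := \{\text{cliques of } \Gamma\} \cup \{\text{induced subgraphs of } \Ge\}.$$
One quickly verifies that $\mathcal{U}$ is a complete cover: every edge of $\Gamma$ is a clique, every subset of a clique is a clique, every induced subgraph of a subgraph of $\Ge$ is an induced subgraph of~$\Ge$, and the empty graph is in both families. Since $\Ge$ itself lies in $\mathcal{U}$, every conjugate of $A_\Ge$ is a $\mathcal{U}$-parabolic subgroup of $A_\Gamma$, so the conclusion we want is a consequence of the fact that intersections of $\mathcal{U}$-parabolic subgroups are parabolic.

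Next I would invoke the preceding lemma, which uses $2$-convexity of $\Ge$ in $\Gamma$ to conclude that $L_{\mathcal{U}}$ is a flag simplicial complex, satisfying the first hypothesis of Corollary~\ref{cor: IP with Godelle Paris}. The main bookkeeping step is then to check the second hypothesis: that every $\mathcal{U}$-standard parabolic subgroup satisfies the Intersection Conjecture. The $\mathcal{U}$-standard parabolics are of two kinds. If $\Gamma'$ is a clique of $\Gamma$, then $A_{\Gamma'}$ satisfies the Intersection Conjecture by hypothesis. If instead $\Gamma' \subseteq \Ge$, then I need to deduce the Intersection Conjecture for $A_{\Gamma'}$ from the Intersection Conjecture for $A_{\Ge}$. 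For this, observe that any parabolic subgroup of $A_{\Gamma'}$ has the form $g A_{\Gamma''} g^{-1}$ with $g\in A_{\Gamma'}\subseteq A_\Ge$ and $\Gamma''\subseteq \Gamma' \subseteq \Ge$, hence is also a parabolic subgroup of $A_\Ge$. Therefore the intersection of two such parabolics is parabolic in $A_\Ge$ by hypothesis; being contained in $A_{\Gamma'}$, Blufstein--Paris \cite{blufstein_parabolic_2023} then upgrades this to being a parabolic subgroup of~$A_{\Gamma'}$, which is what we needed.

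With both hypotheses of Corollary~\ref{cor: IP with Godelle Paris} verified, the intersection of any two $\mathcal{U}$-parabolic subgroups of $A_\Gamma$ is a parabolic subgroup of $A_\Gamma$; in particular this applies to any two conjugates of $A_\Ge$, which completes the argument. The only non-cosmetic step is the descent of the Intersection Conjecture from $A_\Ge$ to subgroups $A_{\Gamma'}$ with $\Gamma'\subseteq \Ge$, and this is handled cleanly by Blufstein--Paris; I do not anticipate any real obstacle beyond making the cover and bookkeeping explicit.
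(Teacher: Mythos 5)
Your proof is correct and follows essentially the same route as the paper, which simply states the corollary as a direct consequence of the preceding lemma together with Corollary~\ref{cor: IP with Godelle Paris} applied to the cover $\mathcal{U}$ of cliques and induced subgraphs of $\Ge$. The one step you spell out that the paper leaves implicit --- descending the Intersection Conjecture from $A_\Ge$ to the standard parabolics $A_{\Gamma'}$ with $\Gamma'\subseteq\Ge$ via Blufstein--Paris --- is handled correctly and is a worthwhile clarification.
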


\begin{proposition}\label{prop: AH splitting 2convex}
	Let $A_\Gamma$ be an irreducible Artin group that visually splits over a standard parabolic subgroup $A_{\Ge}$. Assume that: 
	\begin{itemize}
		\item $\Ge$ is $2$-convex in $\Gamma$,
		\item $A_\Ge$ and all clique parabolic subgroups of $A_\Gamma$ satisfy the Intersection Conjecture. 
	\end{itemize} 
	Then  $A_{\Gamma}$ is acylindrically hyperbolic. 
\end{proposition}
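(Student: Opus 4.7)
The proof is essentially a direct combination of the two main tools developed earlier in this subsection: the criterion for intersections of conjugates of a $2$-convex standard parabolic to remain parabolic (Corollary~\ref{cor: IP 2convex}), and the main acylindrical hyperbolicity criterion for visual splittings (Theorem~\ref{thm: ah splitting IP edge}).

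The plan is to proceed in two steps. First, I would invoke Corollary~\ref{cor: IP 2convex} with the subgraph $\Ge$ as input. The hypotheses of that corollary are matched verbatim by the hypotheses of the proposition: $\Ge$ is $2$-convex in $\Gamma$, and both $A_{\Ge}$ and every clique standard parabolic of $A_\Gamma$ satisfy the Intersection Conjecture. This yields that the intersection of any two conjugates of $A_{\Ge}$ in $A_\Gamma$ is again a parabolic subgroup of $A_\Gamma$.

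Second, having secured this intersection property for conjugates of the edge group, I would feed it into Theorem~\ref{thm: ah splitting IP edge}. The Artin group $A_\Gamma$ is irreducible by assumption and visually splits as $A_\Gamma = A_{\Gamma_1} *_{A_{\Ge}} A_{\Gamma_2}$, so all of the hypotheses of that theorem are met. The conclusion is that $A_{\Ge}$ is weakly malnormal in $A_\Gamma$ and, via the Minasyan--Osin criterion (Theorem~\ref{thm: M-O}), that $A_\Gamma$ is acylindrically hyperbolic. There is no genuine obstacle here; the content of the statement lies entirely in the earlier results, and this proposition is simply the convenient packaging of Corollary~\ref{cor: IP 2convex} into Theorem~\ref{thm: ah splitting IP edge}.
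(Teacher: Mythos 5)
Your proposal is correct and matches the paper's own proof exactly: it applies Corollary~\ref{cor: IP 2convex} to conclude that intersections of conjugates of $A_{\Ge}$ are parabolic, then feeds this into Theorem~\ref{thm: ah splitting IP edge} to obtain acylindrical hyperbolicity. No issues.
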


\begin{proof}
	It follows from Corollary~\ref{cor: IP 2convex} that the intersection of any two conjugates of $A_\Ge$ is again a parabolic subgroup of $A_\Gamma$. The result thus follows from Theorem~\ref{thm: ah splitting IP edge}.
\end{proof}

\paragraph{Application.}
We can use this result to obtain new examples of acylindrically hyperbolic Artin groups not covered by the recent results of Charney--Morris-Wright \cite{MorrisWright2019}.

\begin{definition}
	Let $C_n$ denotes the graph that is a cycle on $n$ vertices. The \textbf{wheel} $W_n$ is the graph obtained from $C_n$ by adding a new vertex (the apex) and connecting it to every vertex of $C_n$. 
\end{definition}

\begin{corollary}\label{cor:AH_wheel}
	Let $A_\Gamma$ be an irreducible Artin group whose underlying graph is a wheel $W_n$ with $n \geq 6$. Then  $A_{\Gamma}$ is acylindrically hyperbolic. 
\end{corollary}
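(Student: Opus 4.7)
The plan is to apply Proposition~\ref{prop: AH splitting 2convex}. I therefore need to exhibit a visual splitting of $A_\Gamma$ over a standard parabolic $A_\Ge$ such that $\Ge$ is $2$-convex in $\Gamma = W_n$, and such that $A_\Ge$ together with every clique parabolic subgroup of $A_\Gamma$ satisfies the Intersection Conjecture.

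Let $a$ denote the apex of $W_n$ and let $v_1, \ldots, v_n$ be the cycle vertices, cyclically labelled. Using the hypothesis $n \geq 6$, I will pick two cycle vertices at cyclic distance~$3$, say $v_1$ and $v_4$, and take $\Gamma_1 = \{a, v_1, v_2, v_3, v_4\}$, $\Gamma_2 = \{a, v_1, v_4, v_5, \ldots, v_n\}$, and $\Ge = \{a, v_1, v_4\}$, all as full subgraphs of $W_n$. The two sets $V(\Gamma_1) \setminus V(\Ge) = \{v_2, v_3\}$ and $V(\Gamma_2) \setminus V(\Ge) = \{v_5, \ldots, v_n\}$ are both non-empty (this is where $n \geq 6$ enters first), and no vertex of the first set is adjacent in $W_n$ to any vertex of the second: the only candidate edge would run along the cycle, but the closest pair $v_3, v_5$ is at cyclic distance~$2$ in $C_n$ and hence non-adjacent. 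Thus $A_\Gamma = A_{\Gamma_1} *_{A_\Ge} A_{\Gamma_2}$ is a non-trivial visual splitting.

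Next I would verify that $\Ge$ is $2$-convex in $W_n$. Because $a$ is adjacent to every other vertex, $W_n$ has diameter~$2$, so any length-$2$ geodesic with endpoints in $\Ge$ must join the unique non-adjacent pair $(v_1, v_4)$ of $\Ge$ and pass through a common neighbor. The common neighbors of $v_1$ and $v_4$ are the apex $a$, together with any cycle vertex adjacent to both; but $n \geq 6$ forces $v_1$ and $v_4$ to be at cyclic distance~$3$ in $C_n$, so they share no neighbor on the cycle. Therefore $a \in \Ge$ is the only common neighbor, and $\Ge$ is $2$-convex.

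Finally, $A_\Ge$ has exactly three generators and so satisfies the Intersection Conjecture by Lemma~\ref{lem:IP for 3gen}. Since the cliques of $W_n$ are the triangles $\{a, v_i, v_{i+1}\}$ together with their subgraphs, every clique parabolic subgroup of $A_\Gamma$ has at most three generators and thus also satisfies the Intersection Conjecture by the same lemma. Proposition~\ref{prop: AH splitting 2convex} then yields the acylindrical hyperbolicity of $A_\Gamma$. No step looks genuinely difficult; the only delicate point is that the hypothesis $n \geq 6$ is exactly what is required, simultaneously, to make $V_1$ and $V_2$ non-empty, to prevent stray cycle edges from crossing the splitting, and to keep $\Ge$ two-convex.
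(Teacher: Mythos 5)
Your proof is correct and follows exactly the route the paper takes: split $A_{W_n}$ visually over a length-two geodesic through the apex, check that this $\Ge$ is $2$-convex (which is where $n\geq 6$ is genuinely used), invoke Lemma~\ref{lem:IP for 3gen} for $A_\Ge$ and the clique parabolics, and conclude via Proposition~\ref{prop: AH splitting 2convex}. The paper states this in two sentences; you have simply supplied the verifications it leaves implicit.
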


\begin{proof}
	Since $n \geq 6$, $A_{W_n}$ visually splits over a parabolic subgroup $A_\Ge$ where  $\Ge$ is a geodesic of length $2$ containing the apex, that is $2$-convex in $W_n$. Since all $3$-generated Artin groups satisfy the Intersection Conjecture by \Cref{lem:IP for 3gen}, it follows that $A_\Ge$ and all clique parabolic subgroups of $A_\Gamma$ satisfy the Intersection Conjecture. The result now follows from Proposition~\ref{prop: AH splitting 2convex}.
\end{proof}

\section{The Weak Malnormality Conjecture} \label{sec:WM}

Next we consider the Weak Malnormality Conjecture.  
The following reduction lemma shows that it is enough to deal with irreducible Artin groups:

\begin{lemma}\label{lem: conjecture red from irred}
	Let $A_{\Gamma_1}, \ldots, A_{\Gamma_k}$ be irreducible Artin groups that satisfy the Weak Malnormality Conjecture. Then the direct product $A_{\Gamma_1}\times \cdots \times A_{\Gamma_k}$ also satisfies the  Weak Malnormality Conjecture. 
\end{lemma}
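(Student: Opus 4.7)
Write $A_\G := A_{\G_1}\times\cdots\times A_{\G_k}$, where $\G$ is the disjoint union of the $\G_i$'s with every cross-edge labelled by $2$. Any standard parabolic $A_\Omega$ of $A_\G$ then decomposes as $A_\Omega = A_{\Omega_1}\times\cdots\times A_{\Omega_k}$ with $\Omega_i := \Omega \cap V(\G_i)$, and since each $A_{\G_i}$ is irreducible, the standard parabolic direct factors of $A_\G$ are precisely the non-trivial products $\prod_{i\in J}A_{\G_i}$ for $J\subseteq\{1,\dots,k\}$. Consequently, $A_\Omega$ contains such a direct factor if and only if $\Omega_j = \G_j$ for some $j$. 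The plan is to verify both directions of the biconditional in the Weak Malnormality Conjecture for $A_\G$ using this decomposition.

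First, suppose $A_\Omega$ contains a direct factor, so that $\G_j\subseteq\Omega$ for some $j$. Here I would use the key observation that $A_{\G_j}$ is normal in $A_\G$, because distinct factors commute. Then for any $g\in A_\G$, $gA_\Omega g^{-1}\supseteq gA_{\G_j}g^{-1} = A_{\G_j}$, so $A_\Omega\cap gA_\Omega g^{-1}\supseteq A_{\G_j}$ is infinite. Hence no conjugate witnesses weak malnormality.

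Conversely, assume $\Omega_i\subsetneq \G_i$ for every $i$. Here the strategy is to build a witness $g = (g_1,\dots,g_k)$ for weak malnormality one coordinate at a time. For each $i$ with $\Omega_i\neq\emptyset$, $A_{\Omega_i}$ is a proper standard parabolic of the irreducible Artin group $A_{\G_i}$; by irreducibility the only candidate standard parabolic direct factor of $A_{\G_i}$ is $A_{\G_i}$ itself, which $A_{\Omega_i}$ does not contain, so the Weak Malnormality Conjecture applied to $A_{\G_i}$ furnishes an element $g_i\in A_{\G_i}$ with $A_{\Omega_i}\cap g_iA_{\Omega_i}g_i^{-1}$ finite. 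For the remaining indices (those with $\Omega_i=\emptyset$) take $g_i=1$. Setting $g = (g_1,\dots,g_k)$ and using that distinct coordinates commute, one computes
\[
A_\Omega\cap gA_\Omega g^{-1} \;=\; \prod_{i=1}^k\bigl(A_{\Omega_i}\cap g_iA_{\Omega_i}g_i^{-1}\bigr),
\]
which is finite as a product of finite groups.

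The argument is essentially structural bookkeeping, and I do not anticipate a substantive obstacle. The only subtle point is verifying that the hypothesis of the Weak Malnormality Conjecture applies in each factor $A_{\G_i}$, but this is precisely where irreducibility of the $A_{\G_i}$'s is used: a proper standard parabolic of an irreducible Artin group vacuously fails to contain a non-trivial standard parabolic direct factor.
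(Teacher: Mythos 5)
Your proposal is correct and follows essentially the same route as the paper: decompose the standard parabolic as $A_{\Omega_1}\times\cdots\times A_{\Omega_k}$ with $\Omega_i=\Omega\cap\Gamma_i$, handle the case where a normal direct factor is contained in $A_\Omega$, and otherwise build the witness coordinatewise and take the product of the finite intersections. Your extra bookkeeping (identifying the direct factors of the product as the subproducts $\prod_{i\in J}A_{\Gamma_i}$, and setting $g_i=1$ when $\Omega_i=\emptyset$) is consistent with, and slightly more explicit than, the paper's argument.
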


\begin{proof}
	Let $A_{\Gamma'}$ be a standard parabolic subgroup of the direct product $A_\Gamma := A_{\Gamma_1}\times \cdots \times A_{\Gamma_k}$. It is clear that if $A_{\Gamma'}$ contains one of the (normal) direct factors $A_{\Gamma_i}$, then it is not weakly malnormal. Thus, let us assume that $A_{\Gamma'}$ contains no direct factor. Note that $A_{\Gamma'}$ decomposes as the direct product $A_{\Gamma'} = A_{\Gamma_1'}\times \cdots \times A_{\Gamma_k'}$ where for each $i$,  $\Gamma_i' := \Gamma' \cap \Gamma_i$. Since $A_{\Gamma'}$ does not contain any of the $A_{\Gamma_i}$, each $A_{\Gamma_i'}$ is a proper parabolic subgroup of $A_{\Gamma_i}$, hence weakly malnormal in $A_{\Gamma_i}$ since by assumption, $A_{\Gamma_i}$ satisfies the Weak Malnormality Conjecture. Thus, for each $i$ we can pick $g_i \in A_{\Gamma_i}$ such that $A_{\Gamma_i'} \cap A_{\Gamma_i'}^{g_i}$ is finite. Now set $g := g_1\cdots g_k$. We have 
	$$A_{\Gamma'} \cap A_{\Gamma'}^g = \big( A_{\Gamma_1'} \cap A_{\Gamma_1'}^{g_1}\big) \times \cdots \times \big( A_{\Gamma_k'} \cap A_{\Gamma_k'}^{g_k} \big)  $$
	which is finite. Thus, $A_{\Gamma'}$ is weakly malnormal in $A_{\Gamma}$. 
\end{proof}

\subsection{Connections between the three main conjectures}

\begin{proposition}\label{prop: from wm edge group to every wm parabolic}
	Let $A_\Gamma$ be an irreducible Artin group that visually splits as an amalgamated product over a standard parabolic subgroup $A_{\Ge}$. If $A_{\Ge}$ is weakly malnormal in $A_{\Gamma}$, then every proper parabolic subgroup of $A_{\Gamma}$ is weakly malnormal.  Thus $A_\Gamma$ satisfies the Weak Malnormality Conjecture.
\end{proposition}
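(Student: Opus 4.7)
The plan is to show that every proper standard parabolic subgroup $P = A_{\Gamma'}$ of $A_\Gamma$ is weakly malnormal, which suffices since the property is conjugation-invariant. The main tool is the Bass--Serre tree $T$ of the visual splitting $A_\Gamma = A_{\Gamma_1} *_{A_{\Ge}} A_{\Gamma_2}$. The key input is that, by the Minasyan--Osin techniques underlying \Cref{thm: M-O}, the weak malnormality of the edge group $A_{\Ge}$ implies the action of $A_\Gamma$ on $T$ is acylindrical; concretely, there exists a constant $R>0$ such that for any vertices $u,v\in T$ with $d(u,v)\ge R$, the intersection $\mathrm{Stab}(u)\cap\mathrm{Stab}(v)$ is finite.

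The argument then divides into two cases, according to whether $P$ has a global fixed vertex on $T$. If $\Gamma'\subseteq\Gamma_i$ for some $i$, then $P\subseteq\mathrm{Stab}(v_i)$, and choosing $g\in A_\Gamma$ with $d(v_i, g v_i)\ge R$ yields
\[
P\cap gPg^{-1}\;\subseteq\;\mathrm{Stab}(v_i)\cap\mathrm{Stab}(gv_i),
\]
which is finite by the acylindrical statement above, so $P$ is weakly malnormal. Otherwise, $\Gamma'$ meets both $\Gamma_1\setminus\Ge$ and $\Gamma_2\setminus\Ge$, and $P$ has no global fixed vertex on $T$; in this case $P$ itself visually splits as $A_{\Gamma'\cap\Gamma_1}*_{A_{\Gamma'\cap\Ge}}A_{\Gamma'\cap\Gamma_2}$, so its action on $T$ has a non-trivial minimal invariant subtree $T_P$, on which $P$ acts cocompactly and contains loxodromic elements (e.g.\ products $ac$ with $a\in\Gamma'\cap\Gamma_1\setminus\Ge$ and $c\in\Gamma'\cap\Gamma_2\setminus\Ge$). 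Here I would pick $g\in A_\Gamma$ so that $gT_P$ lies sufficiently far from $T_P$ in $T$, and use the acylindricity together with the loxodromic structure of $P$ to conclude that $P\cap gPg^{-1}$ is finite.

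The main obstacle is the second case: the ``go far enough'' strategy does not immediately produce a finite intersection, because $P\cap gPg^{-1}$ only setwise preserves the (possibly empty or small) overlap $T_P\cap gT_P$, rather than pointwise fixing vertices at large distance. The delicate part will be to show that the kernel of the action of $P\cap gPg^{-1}$ on this overlap is finite. I expect to resolve this by combining acylindricity with the irreducibility of $A_\Gamma$ via \Cref{cor: normal iff product}, which precludes any non-trivial common normal standard parabolic that could otherwise survive in this kernel, together with \Cref{lem: bounded chain parabolics} to control the nested intersections arising in the argument.
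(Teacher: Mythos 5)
There is a genuine gap, and it sits at the very foundation of your argument: the claim that weak malnormality of $A_{\Ge}$ makes the action of $A_\Gamma$ on the Bass--Serre tree $T$ acylindrical, i.e.\ yields a uniform constant $R$ such that \emph{any} two vertices at distance $\geq R$ have finite common stabiliser. Weak malnormality is an existential statement: it provides a \emph{single} element $h$ with $A_{\Ge} \cap A_{\Ge}^{h}$ finite, equivalently a single pair of edges of $T$ whose stabilisers intersect in a finite group. It says nothing about other pairs of conjugates of $A_{\Ge}$, which may well intersect in infinite subgroups at arbitrarily large distances in $T$; and \Cref{thm: M-O} (Minasyan--Osin) does not assert acylindricity of the tree action --- its proof produces a WPD loxodromic element, which is much weaker. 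Since both of your cases invoke this uniform acylindricity (case 1 to make $\mathrm{Stab}(v_i)\cap\mathrm{Stab}(gv_i)$ finite, case 2 to control the kernel on the overlap), neither case closes. In case 1, for instance, the common stabiliser of $v_i$ and $gv_i$ contains the pointwise stabiliser of the geodesic between them, which is an intersection of edge stabilisers that you have no reason to believe is finite for a generic $g$.

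The missing idea --- and what the paper actually does --- is to use the one ``good'' geodesic segment that weak malnormality \emph{does} give you, and to force the relevant bridge to pass through it. Concretely: $P$ preserves the subtree $T'$ of its induced splitting $P = (P\cap A_{\Gamma_1}) *_{P\cap A_{\Ge}} (P\cap A_{\Gamma_2})$ (a single vertex in your case 1, so both cases are handled uniformly); pick an edge $e=[v,w]$ with $v\in T'$ and $e\not\subset T'$, use weak malnormality of $\mathrm{Stab}(e)$ to get $h$ with $\mathrm{Stab}(e)\cap\mathrm{Stab}(he)$ finite, and let $\gamma$ be the geodesic from $e$ to $he$, which has finite pointwise stabiliser. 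Extending $\gamma$ so that its extremal edges are translates $g_1e, g_2e$, one checks that $g_1T'$ and $g_2T'$ are disjoint and that the extended $\gamma$ is the unique bridge between them; then $P^{g_1}\cap P^{g_2}$, preserving both subtrees, fixes that bridge pointwise and is therefore finite. Your observation in case 2 that the intersection ``only setwise preserves the overlap'' dissolves once you arrange the two translates of $T'$ to be disjoint: any element preserving two disjoint subtrees of a tree fixes the bridge between them pointwise. Without rerouting the bridge through $\gamma$, though, neither acylindricity nor an appeal to \Cref{cor: normal iff product} and \Cref{lem: bounded chain parabolics} will give you finiteness of that pointwise stabiliser, so as written the proof does not go through.
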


\begin{proof}
	Let $T$ be the Bass-Serre of the splitting $A_\Gamma = A_{\Gamma_1}*_{A_{\Ge}} A_{\Gamma_2}$. Let $P$ be a standard parabolic subgroup of $A_\Gamma$. Note that we have a splitting $P = P_1 *_{P_{\Ge}} P_2$, where $P_i := P \cap A_{\Gamma_i}$ and $P_{\Ge}= P \cap A_{\Ge}$. The Bass--Serre tree $T'$ of that induced splitting embeds isometrically in $T$. 
	
	Since $P$ is a proper parabolic subgroup, we have $T' \neq T$, hence we can pick a vertex $v$ of $T'$, and an edge $e=[v,w]$ that is not in $T'$.  Since $\mathrm{Stab}(e)$ is conjugated to $A_{\Ge}$, it is weakly malnormal in $A_\Gamma$. Choose $h \in A_\G$ such that $\mathrm{Stab}(e) \cap \mathrm{Stab}(he)$ is finite.  Let $\gamma$ be the geodesic path in $T$ with initial edge $e$ and final edge $he$. In particular, the pointwise stabilizer of $\gamma$ is finite.
	
		\medskip 
	
\textbf{Claim:} There exist elements $g_1, g_2 \in A_{\Gamma}$ such that the trees $g_1T'$ and $g_2T'$ are disjoint, and the unique geodesic between them contains $\gamma$. 

\medskip
	
 If either (or both) endpoints  of $\gamma$ are translates of $w$, we can extend $\gamma$ by a single edge at that endpoint to obtain a geodesic path $\gamma' \supseteq \gamma$ both of whose endpoints are translates of $v$.  Say the initial edge of $\gamma'$ is $g_1e$ and the final edge is $g_2 e$.  Since $e$ is not contained in $T'$, $g_ie$ is not contained in $g_iT'$ for $i=1,2$.  Thus $\gamma'$ intersects $g_iT'$ in a single point.   It now follows from standard arguments on the geometry of trees that $g_1T'$ and $g_2T'$ are disjoint, and $\gamma'$ is the unique geodesic between them, which proves the claim. 

\medskip
	
	Since $P$ stabilises the tree $T'$, the conjugate $P^{g_i}$ stabilises the tree $g_iT'$, and it follows that the intersection $P^{g_1} \cap P^{g_2}$ stabilises the unique geodesic between these disjoint trees.  Thus it fixes pointwise the path $\gamma'$. Since $\gamma' \supseteq \gamma$, it has finite stabiliser and we conclude that $P^{g_1} \cap P^{g_2}$ is finite.  Hence $P$ is weakly malnormal in $A_\Gamma$. 
\end{proof}

Combining the results above, we now conclude:

\begin{corollary}\label{cor: conj implications} Suppose $A_\G $ is irreducible and $\G$ is not a clique. 
	\begin{itemize}
		\item If $A_\G$ satisfies the Intersection Conjecture, then it also satisfies the Weak Malnormality Conjecture. 
		\item If $A_\G$ satisfies the Weak Malnormality Conjecture, then it also satisfies the Acylindrical Hyperbolicity Conjecture.  
	\end{itemize}
\end{corollary}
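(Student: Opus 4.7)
The plan is to chain together the three key results established earlier in the paper. For both implications, I will exhibit a weakly malnormal edge group in a visual splitting, then apply in one direction Proposition~\ref{prop: from wm edge group to every wm parabolic} to upgrade the conclusion to all proper parabolic subgroups, and in the other direction the Minasyan--Osin criterion (Theorem~\ref{thm: M-O}) to deduce acylindrical hyperbolicity.

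For the first bullet, I will fix a non-trivial visual splitting $A_\G = A_{\G_1} *_{A_\Ge} A_{\G_2}$, which exists because $\G$ is not a clique. The Intersection Conjecture directly implies the hypothesis of Theorem~\ref{thm: ah splitting IP edge} for this splitting, so $A_\Ge$ is weakly malnormal in $A_\G$. Proposition~\ref{prop: from wm edge group to every wm parabolic} then forces every proper standard parabolic subgroup to be weakly malnormal. Because $A_\G$ is irreducible, the only direct factor of $A_\G$ is $A_\G$ itself, so the ``does not contain a direct factor parabolic'' condition in the Weak Malnormality Conjecture is automatic for any proper parabolic, completing this direction.

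For the second bullet, I again pick a visual splitting $A_\G = A_{\G_1} *_{A_\Ge} A_{\G_2}$. The edge group $A_\Ge$ is a proper standard parabolic (since the splitting is non-trivial, $\Ge$ is a proper subgraph of $\G$), so the Weak Malnormality Conjecture, applied in the irreducible case, gives that $A_\Ge$ is weakly malnormal in $A_\G$. Theorem~\ref{thm: M-O} then yields that $A_\G$ is either virtually cyclic or acylindrically hyperbolic. Since $\G$ is not a clique, there exist two vertices $s, t \in V(\G)$ not connected by an edge; these generate a free subgroup of rank two in $A_\G$, ruling out the virtually cyclic alternative. Hence $A_\G$ is acylindrically hyperbolic. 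Finally, since acylindrically hyperbolic groups have finite centre and quotients by finite normal subgroups preserve acylindrical hyperbolicity, I conclude that $A_\G / Z(A_\G)$ is acylindrically hyperbolic, verifying the Acylindrical Hyperbolicity Conjecture.

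Since all the technical content has been developed earlier, this corollary essentially amounts to a bookkeeping synthesis, with no substantive new obstacle. The only mild subtlety is navigating the ``if and only if'' form of the Weak Malnormality Conjecture, but this disappears under the irreducibility hypothesis, where no proper parabolic can contain a direct factor, so the conjecture reduces cleanly to the assertion that every proper standard parabolic is weakly malnormal. The passage from acylindrical hyperbolicity of $A_\G$ to that of its central quotient is a standard fact about acylindrically hyperbolic groups and does not require any Artin-theoretic input.
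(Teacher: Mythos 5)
Your proof is correct and follows essentially the same route as the paper: the first bullet chains Theorem~\ref{thm: ah splitting IP edge} with Proposition~\ref{prop: from wm edge group to every wm parabolic}, and the second applies the Minasyan--Osin criterion to the weakly malnormal edge group of a visual splitting. You additionally spell out two details the paper leaves implicit (ruling out the virtually cyclic alternative via a rank-two free parabolic, and passing to the central quotient), both of which are handled correctly.
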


\begin{proof} If $\G$ is not a clique, $A_\G$ admits a visual splitting over some standard parabolic $A_\Ge$. The first bullet point is the direct application of Theorem~\ref{thm: ah splitting IP edge} and Proposition~\ref{prop: from wm edge group to every wm parabolic}.  For the second bullet point, since $A_\G$ is irreducible and satisfies the Weak Malnormality Conjecture, $A_{\Ge}$ is weakly malnormal, so the result follows from Theorem~\ref{thm: M-O}.
\end{proof}

One might wonder if the converse of these implications also hold.  
It is possible to obtain a partial converse to the implication in the second bullet point, assuming that~$A_\Gamma$ acts acylindrically on some hyperbolic space such that the geometry of the action is ``compatible'' with the parabolic subgroups, in the following sense: 

\begin{lemma}
	Let $A_\Gamma$ be an irreducible Artin group, and assume that $A_\Gamma$ is acylindrically hyperbolic, with a cobounded acylindrical action on a hyperbolic graph $X$. Suppose that for every proper parabolic subgroup $A_{\Gamma'}$, the following holds:  Let $X_{\Gamma'} \subset X$ denote the $A_{\Gamma'}$-orbit of some chosen point of $X$. Then $X_{\Gamma'}$ is quasi-convex in $X$ and its limit set $\Lambda X_{\Gamma'}$ is a strict subset of the Gromov boundary $\partial X$. 
	
	Then $A_\Gamma$ satisfies the Weak Malnormality Conjecture. 
\end{lemma}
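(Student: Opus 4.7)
The plan is to show that every proper parabolic subgroup $A_{\Gamma'}$ of the irreducible Artin group $A_\Gamma$ is weakly malnormal, which, given irreducibility, is exactly the Weak Malnormality Conjecture in this setting. The strategy is the classical template for quasi-convex subgroups in a negatively curved setting: produce a conjugate of $A_{\Gamma'}$ whose limit set is disjoint from $\Lambda X_{\Gamma'}$, then exploit acylindricity and the quasi-convex geometry to conclude that the intersection with the original is finite.

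For the first step, I would find a loxodromic element $h \in A_\Gamma$ whose attracting and repelling fixed points $h^{\pm} \in \partial X$ both lie outside the closed proper set $\Lambda X_{\Gamma'}$. Such an $h$ exists because the acylindrical action of $A_\Gamma$ on $X$ is non-elementary (coboundedness on an unbounded hyperbolic space rules out the elliptic and loxodromic cases for an Artin group that is not virtually cyclic), so pairs of loxodromic fixed points are dense in $\partial X \times \partial X \setminus \Delta$. Standard north-south dynamics on $\partial X$ then imply that for all sufficiently large $n$, the set $h^n(\Lambda X_{\Gamma'})$ is concentrated near $h^+$, and in particular disjoint from $\Lambda X_{\Gamma'}$. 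Setting $g := h^n$, the subgroup $K := A_{\Gamma'} \cap g A_{\Gamma'} g^{-1}$ stabilizes both $X_{\Gamma'}$ and $g X_{\Gamma'}$ setwise, so its limit set (which is basepoint-independent for a group action on a hyperbolic graph) is contained in $\Lambda X_{\Gamma'} \cap g \Lambda X_{\Gamma'} = \emptyset$; in particular every $K$-orbit in $X$ is bounded.

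The hard step is to upgrade this bounded orbit to finiteness of $K$. The $K$-orbits are actually trapped inside the coarse intersection $X_{\Gamma'} \cap N_r(g X_{\Gamma'})$, whose diameter is controlled by a standard hyperbolic-geometry lemma for quasi-convex sets with disjoint limit sets. Combining this uniform diameter bound with coboundedness and acylindricity of the ambient action, one can exhibit a pair of points $p, q \in X$ with $d(p, q)$ exceeding the acylindricity constant $R_\epsilon$ such that every $k \in K$ moves both $p$ and $q$ through displacement at most some fixed $\epsilon$; the acylindricity estimate then bounds $|K| \leq N_\epsilon$. This is where I expect the main obstacle to lie: in a general acylindrical action, bounded orbits do not imply finiteness, so the argument must genuinely leverage both the quasi-convex geometry of $X_{\Gamma'}$ and the coboundedness of $A_\Gamma \curvearrowright X$ to produce the required far-apart pair of points with uniformly small displacement under all of $K$.
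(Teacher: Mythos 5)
Your overall strategy is the same as the paper's: use the properness of the limit set to find a loxodromic $g$ with $\Lambda g$ disjoint from $\Lambda X_{\Gamma'}$, push $X_{\Gamma'}$ off itself by a high power $g^n$ via north--south dynamics, and then invoke acylindricity to kill the intersection $A_{\Gamma'} \cap A_{\Gamma'}^{g^n}$. But the step you explicitly flag as the ``main obstacle'' --- producing two far-apart points $p,q$ that every element of $K := A_{\Gamma'} \cap A_{\Gamma'}^{g^n}$ displaces by a uniformly bounded amount --- is precisely the content of the proof, and you assert it rather than carry it out. The detour through ``$\Lambda K = \emptyset$, hence $K$ has bounded orbits'' does not help here: as you yourself note, bounded orbits do not give finiteness under an acylindrical action, and nothing in that observation produces the required pair of points.

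Here is how the paper closes the gap, and it is quite concrete. By hyperbolicity of $X$ and quasi-convexity of $X_{\Gamma'}$, there are constants $\ell$ and $D$ such that whenever two translates of $X_{\Gamma'}$ are at distance at least $\ell$, the nearest-point projection of one onto the other has diameter at most $D$. Acylindricity supplies an $L$ such that any two points at distance at least $L$ admit only finitely many $h$ displacing both by at most $D$. Using north--south dynamics, choose $n$ so that $X_{\Gamma'}$ and $g^nX_{\Gamma'}$ are at distance greater than $\max(\ell, L)$. Now take $x \in X_{\Gamma'}$ and $y \in g^nX_{\Gamma'}$ realising the distance between the two translates. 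The key point you are missing is that any $h \in K$ preserves both translates setwise, hence sends the pair $(x,y)$ to another distance-realising pair $(hx, hy)$; since all such closest points on each side lie in the respective nearest-point projections, which have diameter at most $D$, one gets $d(x,hx) \leq D$ and $d(y,hy) \leq D$ for every $h \in K$. With $d(x,y) \geq L$, acylindricity then bounds $|K|$. So the far-apart pair is not produced by coboundedness or by the bounded coarse intersection, but by the elementary observation that $K$ permutes the distance-realising pairs between the two disjoint quasi-convex translates. Without this (or an equivalent device), your argument does not go through.
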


\begin{proof}
	The set of limit points of loxdromic elements of $A_\Gamma$ is dense in $\partial X$ (see for instance Theorem 2.6 in \cite{Hamann2017}), so since $\Lambda X_{\Gamma'}$ is a proper closed subset of $\partial X$, we can pick a loxodromic element $g \in A_\Gamma$ such that $\Lambda g \cap \Lambda X_{\Gamma'} = \varnothing$. By hyperbolicity of $X$ and quasiconvexity of $X_{\Gamma'}$, there exist constants $\ell$ and $D$ (that depend only on the space $X$ and the quasiconvexity constants of $X_{\Gamma'}$) such that if two translates of $X_{\Gamma'}$ are at distance at least $\ell$, then the diameter of the closest projection of one on the other is bounded above by  $D$. By acylindricity of the action, we can pick a constant $L$ such that if $x, y\in X$ are at distance at least $L$, there are only finitely many elements $h \in A_\Gamma$ such that  $d(x, hx)\leq D$ and $ d(y, hx)\leq D$. 
	
	Using North-South dynamics of the action, we can now pick a large power $n \geq 0$ such that $X_{\Gamma'}$ and $g^nX_{\Gamma'}$ are disjoint, the diameter of the closest projection on each other is bounded above by $D$, and such that  their distance is greater than $L$. Let $x \in X_{\Gamma'}$ and $y\in g^nX_{\Gamma'}$ be a pair of points that realises the distance between these two translates. We get in particular that an element $h \in A_{\Gamma'} \cap A_{\Gamma'}^{g^n}$  sends the pair $x, y$ to another pair realising the distance between these two translates. Thus, for every $h \in A_{\Gamma'} \cap A_{\Gamma'}^{g^n}$, we have $d(x, hx)\leq D$ and $ d(y, hx)\leq D$. Since $d(x, y) \geq L$ by construction, the acylindricity implies that the set of such $h$ is finite. Thus, $A_{\Gamma'} \cap A_{\Gamma'}^{g^n}$ is finite, and $A_{\Gamma'}$ is weakly malnormal.
\end{proof}

Thus, we ask the following question: 

\begin{question} 
	Let $A_\Gamma$ be an irreducible Artin group, and assume that $A_\Gamma$ is acylindrically hyperbolic, with a cobounded acylindrical action on a hyperbolic graph $X$. Let $A_{\Gamma'}$ be a proper parabolic subgroup of $A_\Gamma$, and let $X_{\Gamma'} \subset X$ denote the $A_{\Gamma'}$-orbit of some chosen point of $X$. Do we have that $X_{\Gamma'}$ is quasi-convex in $X$, with limit set $\Lambda X_{\Gamma'} \neq \partial X$?
\end{question}

\subsection{Artin groups satisfying the Weak Malnormality Conjecture}
 
In this section we will show that the Weak Malnormality Conjecture holds for several classes of Artin groups, which allows us to prove that new classes of Artin groups are acylindrically hyperbolic.  

\begin{proposition}\label{prop:MC classes}
	The Weak Malnormality Conjecture holds for the following classes of groups: 
	\begin{itemize}
				\item Artin groups satisfying the hypothesis of Theorem~\ref{thm: ah splitting IP edge} (for instance, even Artin group of FC type),
		\item Artin groups of spherical type, 
		\item two-dimensional Artin groups.
\end{itemize}
\end{proposition}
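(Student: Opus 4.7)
The plan is to reduce to the irreducible case via Lemma~\ref{lem: conjecture red from irred}, then treat each bullet in turn.

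The first bullet is immediate: Theorem~\ref{thm: ah splitting IP edge} yields the weak malnormality of the visual edge group $A_\Ge$, and Proposition~\ref{prop: from wm edge group to every wm parabolic} then propagates this to every proper parabolic subgroup. For the specific case of even Artin groups of FC type, the required hypothesis is automatic from the Intersection Conjecture for this class~\cite{antolin_subgroups_2023}.

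For the spherical-type bullet, if $\Gamma$ is not a clique then $A_\Gamma$ admits a visual splitting whose edge group is itself a spherical-type standard parabolic. Since spherical-type Artin groups satisfy the Intersection Conjecture~\cite{CumplidoGebhardtGonzalesMenesesWiest2017}, the hypothesis of Theorem~\ref{thm: ah splitting IP edge} is satisfied and we reduce to the first bullet. The remaining case where $\Gamma$ is a clique reduces, in the irreducible setting, to either rank one (trivial) or a dihedral Artin group $A_{I_2(m)}$ with $m \geq 2$; for the latter, the only nontrivial proper standard parabolics are the cyclic subgroups $\langle s \rangle$ and $\langle t \rangle$, and I would check directly that $\langle s\rangle \cap t\langle s\rangle t^{-1} = \{1\}$ by a length-in-the-Artin-monoid argument in the spirit of Lemma~\ref{Lem: no proper normal subgroups}, using torsion-freeness of spherical Artin groups to upgrade finiteness to triviality.

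For the two-dimensional bullet, the natural tool is the Moussong Deligne complex $D_\Gamma$, which is CAT(0) by~\cite{CharneyDavis1995} and whose cell stabilisers are the spherical-type (cyclic or dihedral) parabolic subgroups. By Proposition~\ref{lem: IP via Deligne}, intersections of any two conjugates of a spherical-type standard parabolic are parabolic, so whenever $A_\Gamma$ admits a visual splitting over such a parabolic, the hypothesis of Theorem~\ref{thm: ah splitting IP edge} is satisfied and the first bullet closes the argument. The main obstacle is the residual case where no visual splitting over a spherical-type edge group exists; here I plan to use the action of $A_\Gamma$ on $D_\Gamma$ together with Vaskou's acylindrical hyperbolicity result~\cite{Vaskou2022}, arguing that for a proper standard parabolic $A_{\Gamma'}$ the associated convex Deligne subcomplex $D_{\Gamma'} \subset D_\Gamma$ is translated off of itself by a suitable loxodromic element $g$ whose axis lies sufficiently far from $D_{\Gamma'}$; the intersection $A_{\Gamma'} \cap g A_{\Gamma'} g^{-1}$ then stabilises a bounded geodesic between two disjoint convex subcomplexes, and a careful analysis of the endpoint vertex stabilisers together with torsion-freeness of $A_\Gamma$ forces this intersection to be trivial.
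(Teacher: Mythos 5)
Your first bullet matches the paper's argument exactly (Theorem~\ref{thm: ah splitting IP edge} plus Proposition~\ref{prop: from wm edge group to every wm parabolic}). The other two bullets each contain a genuine gap.

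For the spherical-type case: the presentation graph of a spherical-type Artin group is always a clique (a missing edge means $m_{st}=\infty$, forcing the Coxeter group to be infinite), so your first subcase is vacuous; more seriously, your claim that the irreducible clique case ``reduces to rank one or a dihedral Artin group'' is false. The braid groups $B_n$ for $n\geq 4$, and all higher-rank irreducible spherical types, are irreducible spherical-type Artin groups on cliques with three or more generators, and your proposal gives no argument for weak malnormality of their proper parabolics. The paper handles exactly this case by citing Antol\'in--Cumplido: a non-cyclic, non-dihedral irreducible spherical-type $A_\Gamma$ contains a subgroup isomorphic to $A_{\Gamma'}*\Z$ with the given proper parabolic as a free factor, and weak malnormality follows from the action on the associated Bass--Serre tree. (In the dihedral case your monoid-length sketch also underdelivers: the length argument only yields $s^n = g s^m g^{-1}\Rightarrow n=m$; to get triviality of the intersection the paper additionally invokes the centraliser identity $C(s^n)=C(s)$ from Crisp.)

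For the two-dimensional case: the residual case is where the real content lies, and your sketch does not close it. Knowing that $kD_{\Gamma'}$ and $gkD_{\Gamma'}$ are disjoint convex subcomplexes and that $kA_{\Gamma'}k^{-1}\cap gkA_{\Gamma'}(gk)^{-1}$ fixes the connecting geodesic does not force this intersection to be finite: the pointwise stabiliser of a geodesic segment in $D_\Gamma$ is an intersection of spherical-type parabolic vertex stabilisers, which are infinite groups, and torsion-freeness gives you nothing here. Moreover, extracting such a configuration from Vaskou's acylindrical hyperbolicity would require quasiconvexity of parabolic orbits in the relevant hyperbolic space, which the paper explicitly records as an open question. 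The paper instead seeds the argument with Vaskou's Lemma 5.7, which supplies a dihedral parabolic $A_{a,b}$ (with label $>2$) that is \emph{already known} to be weakly malnormal and whose vertex in $D_\Gamma$ has unbounded link; Proposition~\ref{lem:malnormal-Deligne} then uses an angle-$\geq\pi$ concatenation argument in the CAT(0) Deligne complex to route the length-minimising path between $kD_{\Gamma'}$ and $gkD_{\Gamma'}$ through a translate of a geodesic with finite pointwise stabiliser. Without an input of this kind your argument has no source of finiteness, so the residual case remains open in your write-up.
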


Using the criterion of Minasyan--Osin \cite{minasyan_acylindrical_2015}, this implies the acylindrical hyperbolicity of many groups admitting a visual splitting:

\begin{corollary}\label{cor: AH for spherical even FC and Euclidean}
	Let $A_\Gamma$ be an Artin group with a visual splitting $A_{\G_1} *_{A_\Ge} A_{\G_2}$ and assume that $A_{\Ge}$ does not contain a direct factor of $A_{\Gamma_1}$ (which holds in particular if $A_{\Gamma_1}$ is irreducible).  Suppose that $A_{\Gamma_1}$ is one of the following: 
	\begin{itemize}
\item an Artin group satisfying the hypothesis of Theorem~\ref{thm: ah splitting IP edge} (for instance, an even Artin group of FC type),
\item an Artin group of spherical type, 
\item a two-dimensional Artin group.
	\end{itemize}
	Then $A_\Gamma$ is acylindrically hyperbolic. 
\end{corollary}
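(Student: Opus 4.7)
The plan is to apply the Minasyan--Osin criterion (Theorem~\ref{thm: M-O}) to the visual splitting $A_\Gamma = A_{\Gamma_1} *_{A_\Ge} A_{\Gamma_2}$. To do this I need to verify two things: that the edge group $A_\Ge$ is weakly malnormal in $A_\Gamma$, and that $A_\Gamma$ is not virtually cyclic. Once both are in place, acylindrical hyperbolicity follows immediately from Theorem~\ref{thm: M-O} (the non-triviality assumption $A_{\Gamma_1}\neq A_\Ge\neq A_{\Gamma_2}$ in that theorem is built into our definition of a visual splitting).

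For weak malnormality, the strategy is to do all of the work inside the vertex group $A_{\Gamma_1}$ and then transfer the conclusion to $A_\Gamma$. Non-triviality of the visual splitting gives $\Ge\subsetneq \Gamma_1$, so $A_\Ge$ is a \emph{proper} standard parabolic subgroup of $A_{\Gamma_1}$. The hypothesis guarantees that $A_\Ge$ does not contain any direct factor of $A_{\Gamma_1}$, and Proposition~\ref{prop:MC classes} tells me that $A_{\Gamma_1}$ satisfies the Weak Malnormality Conjecture in each of the three listed cases. Putting these together produces an element $g\in A_{\Gamma_1}$ such that $A_\Ge\cap gA_\Ge g^{-1}$ is finite. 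Since this intersection is a set-theoretic one and $g$ already lies in $A_\Gamma$, the very same $g$ witnesses the weak malnormality of $A_\Ge$ in $A_\Gamma$.

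To rule out the virtually cyclic alternative, I exploit the existence of the visual splitting itself: this forces $\Gamma$ not to be a clique, so it contains two vertices $s,t$ with no edge between them. By van der Lek, the standard parabolic subgroup $A_{\{s,t\}}$ is free of rank two, so $A_\Gamma$ contains a copy of $F_2$ and cannot be virtually cyclic.

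The main content of the argument is really absorbed into Proposition~\ref{prop:MC classes}, which collects the Weak Malnormality Conjecture for the three families; the present corollary is then a clean application of that proposition and of Theorem~\ref{thm: M-O}. The only genuinely new point to check is the promotion of weak malnormality from the vertex group $A_{\Gamma_1}$ to the whole amalgam $A_\Gamma$, which is immediate because weak malnormality is witnessed by a single element and that element already lives in $A_{\Gamma_1}\subseteq A_\Gamma$.
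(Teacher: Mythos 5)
Your proof is correct and follows essentially the same route as the paper: invoke Proposition~\ref{prop:MC classes} to get weak malnormality of $A_\Ge$ in $A_{\Gamma_1}$, note that the witnessing element already lies in $A_\Gamma$ so weak malnormality passes to the ambient group, and conclude via Theorem~\ref{thm: M-O}. Your explicit verification that $A_\Gamma$ is not virtually cyclic (via a rank-two free parabolic coming from two non-adjacent vertices) is a detail the paper leaves implicit, but it does not change the argument.
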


\begin{proof} By Proposition \ref{prop:MC classes}, the hypotheses of the corollary imply that $A_{\Ge}$ is weakly malnormal in $A_{\Gamma_1}$ and hence also in $A_\G$, so the result follows from Theorem \ref{thm: M-O}.
\end{proof}

The proof of Proposition \ref{prop:MC classes} will occupy the remainder of this section.

\paragraph{Spherical-type Artin groups.}

Although the Intersection Conjecture is known to hold for spherical-type Artin groups, we cannot apply Corollary~\ref{cor: conj implications} since the defining graph $\G$ is always a clique.  Nevertheless, we can prove:

\begin{lemma}\label{lem:spherical type}
	Artin groups of spherical type satisfy the Weak Malnormality Conjecture.
\end{lemma}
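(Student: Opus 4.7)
The plan follows the paper's general template of reducing to the irreducible case and then running a stabiliser-descent argument along a path in a suitable complex.

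First, by \Cref{lem: conjecture red from irred}, it suffices to prove the Weak Malnormality Conjecture for \emph{irreducible} spherical-type Artin groups, since a direct product of spherical-type groups is again spherical-type. In the irreducible case, no proper standard parabolic can contain a direct factor, so the conjecture reduces to the statement that every proper standard parabolic $A_{\Gamma'} \subsetneq A_\Gamma$ is weakly malnormal.

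Second, I would exploit two structural facts. (a) Spherical-type Artin groups satisfy the Intersection Conjecture \cite{CumplidoGebhardtGonzalesMenesesWiest2017}, so $A_{\Gamma'} \cap g A_{\Gamma'} g^{-1}$ is always a parabolic subgroup of $A_\Gamma$. (b) Spherical-type Artin groups are torsion-free (Deligne), so any finite parabolic subgroup is trivial. Combining (a) and (b), the task reduces to producing a single $g \in A_\Gamma$ with $A_{\Gamma'} \cap g A_{\Gamma'} g^{-1} = \{1\}$.

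Third, to produce such a $g$, I would imitate the geodesic-extension argument of Claim 2 in the proof of \Cref{thm: ah splitting IP edge}. Let $A_\Gamma$ act on an appropriate complex $X$ whose cell stabilisers are proper parabolic subgroups; since $\Gamma$ is a clique there is no visual splitting, so in place of a Bass--Serre tree one would use a coset-based Deligne-type complex built from proper standard parabolics. Starting at a vertex $v$ whose stabiliser contains $A_{\Gamma'}$, one iteratively extends a combinatorial path so that its pointwise stabiliser strictly shrinks at each step, invoking \Cref{Lem: no proper normal subgroups} (applied inside the relevant vertex stabiliser, which is itself a spherical-type parabolic) to forbid the current stabiliser from being normal in the next vertex stabiliser unless it is trivial. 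By \Cref{lem: bounded chain parabolics} the descent terminates in finitely many steps at a path with trivial pointwise stabiliser, and its endpoints $v$ and $gv$ then yield $A_{\Gamma'} \cap g A_{\Gamma'} g^{-1} = \{1\}$.

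The main obstacle is the absence of a canonical complex in the spherical-type setting. Unlike the case of \Cref{thm: ah splitting IP edge}, where the Bass--Serre tree is readily available from the visual splitting, here one must build a suitable coset complex by hand, verify that its cell stabilisers are parabolic subgroups (so that \Cref{lem: bounded chain parabolics} applies and the intersections along paths remain parabolic), and ensure enough local combinatorial room so that the descent can always be continued whenever the current stabiliser is non-trivial. This technical setup is the crux of the argument.
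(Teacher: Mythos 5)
Your reduction to the irreducible case and the observation that it suffices to produce one $g$ with $A_{\Gamma'} \cap gA_{\Gamma'}g^{-1}$ finite are fine, but the core of your argument — the stabiliser-descent along a path in a hand-built coset complex — has two genuine gaps. First, the complex you need is never produced, and its existence is not routine: for a spherical-type Artin group there is no analogue of the Bass--Serre tree of a visual splitting ($\Gamma$ is a clique, so there is no visual splitting at all), and the Deligne-type complexes in this paper are only set up for infinite-type groups. A coset complex on the proper standard parabolics of a spherical-type group with the nonpositive-curvature properties your descent requires is not available in the literature (the ``spherical Deligne complex'' is only conjecturally CAT(1) in general), so the step you yourself call ``the crux'' is an unresolved obstacle, not a technicality. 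Second, even granting such a complex, the descent of Claim~2 in the proof of \Cref{thm: ah splitting IP edge} only shows that the \emph{pointwise stabiliser of the constructed path} is trivial. Weak malnormality needs the intersection of the stabilisers of the two \emph{endpoints} to be finite, and outside of a tree these two groups need not coincide: an element fixing $v$ and $gv$ need not fix your particular path between them. In \Cref{lem: IP via Deligne} this is repaired by taking the path carried by the unique CAT(0) geodesic between the two cells, but your iteratively extended path is not guaranteed to be such a geodesic, so the final step ``the endpoints yield $A_{\Gamma'} \cap gA_{\Gamma'}g^{-1} = \{1\}$'' does not follow.

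For contrast, the paper avoids all of this with a much more elementary case split. For irreducible spherical type of rank at least $3$, it quotes the result of Antol\'in--Cumplido that $A_\Gamma$ contains a subgroup $A_{\Gamma'} * \Z$ with the given proper parabolic as a free factor; conjugating by the $\Z$-generator and applying Bass--Serre theory for this honest free product gives trivial intersection. The remaining dihedral case is handled by a direct computation: if $s^n = g s^m g^{-1}$ is a nontrivial common element, the length homomorphism to $\Z$ forces $n=m$, so $g$ centralises $s^n$ and hence $s$, contradicting the choice of $g$ via \Cref{Lem: no proper normal subgroups}. If you want to salvage your approach you would essentially have to re-prove a nonpositive-curvature statement for spherical-type Artin groups that is currently out of reach; I recommend switching to the free-product argument.
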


\begin{proof}
By Lemma~\ref{lem: conjecture red from irred},  it is enough to deal with the irreducible spherical case. Suppose that $A_\Gamma$ is irreducible and of spherical type, and let $A_{\Gamma'}$ be a proper parabolic subgroup. 
	
	If $A_\Gamma$ is not cyclic or of dihedral type, then by Theorem~3 of \cite{AntolinCumplido},  $A_\Gamma$ contains a subgroup isomorphic to $A_{\Gamma'}*\Z$, and where the free factor $A_{\Gamma'}$ is the proper parabolic subgroup under study. A standard argument from actions on trees shows that $A_{\Gamma'}$ is weakly malnormal in $A_{\Gamma'}*\Z$, hence it is weakly malnormal in $A_\Gamma$. 
	
	If $A_\Gamma$ is cyclic, there is nothing to prove. Suppose that $A_\Gamma$ is dihedral, with standard generators $s, t$, and let us show that $A_{\Gamma'} = \langle s \rangle$ is weakly malnormal.  We know from Lemma~\ref{Lem: no proper normal subgroups} that there exists $g \in A_\Gamma$ such that $\langle s \rangle^g \neq \langle s \rangle$. Let us show by contradiction that $\langle s \rangle^g \cap \langle s \rangle = \{1\}$, which will prove weak malnormality. Let $x \in \langle s \rangle^g \cap \langle s \rangle$ be a non-trivial element, and let $n, m \geq 1$ be such that $x = s^n = gs^mg^{-1}$. By applying the homomorphism $A_\Gamma \rightarrow \Z$ sending both generators to $1$, we see that $n=m$. Thus, $g$ lies in the centralizer $C(s^n)=C(s)$, the latter equality following for instance from Lemma 7 of \cite{Crisp2005}. It follows that $\langle s \rangle^g = \langle s \rangle$, a contradiction.
\end{proof}

\paragraph{Even Artin groups of FC-type.}~

	\begin{lemma}\label{lem:ThmA implies WM}
		Let $A_\G$ be an Artin group satisfying the hypotheses of Theorem~\ref{thm: ah splitting IP edge}. Then~$A_\G$ satisfies the Weak Malnormality Conjecture.
	\end{lemma}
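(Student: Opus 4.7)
The plan is simply to chain together two results already established in the excerpt, so the proof should be essentially a one-line corollary.

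First I would unpack what the Weak Malnormality Conjecture requires in this setting. Since the hypotheses of Theorem~\ref{thm: ah splitting IP edge} include that $A_\Gamma$ is irreducible, there are no proper direct factors of $A_\Gamma$, so the conclusion we need is that \emph{every} proper standard parabolic subgroup of $A_\Gamma$ is weakly malnormal.

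Next, I would apply Theorem~\ref{thm: ah splitting IP edge} directly to the given visual splitting $A_\Gamma = A_{\Gamma_1} *_{A_{\Ge}} A_{\Gamma_2}$. That theorem's conclusion explicitly states that under the hypothesis on intersections of conjugates of $A_{\Ge}$, the edge group $A_{\Ge}$ is weakly malnormal in $A_\Gamma$.

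Finally, having weak malnormality of the edge group, I would invoke Proposition~\ref{prop: from wm edge group to every wm parabolic}, whose hypotheses are exactly: $A_\Gamma$ is irreducible, visually splits over $A_{\Ge}$, and $A_{\Ge}$ is weakly malnormal. Its conclusion is precisely that every proper parabolic subgroup of $A_\Gamma$ is weakly malnormal, which is the Weak Malnormality Conjecture for~$A_\Gamma$. There is no real obstacle here; the substantive work has already been done in proving Theorem~\ref{thm: ah splitting IP edge} and Proposition~\ref{prop: from wm edge group to every wm parabolic}, and this lemma exists mainly to record their combination as the statement of Theorem~D.
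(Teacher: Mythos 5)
Your proposal is correct and matches the paper's proof exactly: the paper also deduces weak malnormality of the edge group from Theorem~\ref{thm: ah splitting IP edge} and then applies Proposition~\ref{prop: from wm edge group to every wm parabolic}. Your additional remark that irreducibility reduces the Weak Malnormality Conjecture to showing every proper parabolic is weakly malnormal is a helpful clarification the paper leaves implicit.
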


\begin{proof}
	The edge group $A_\Ge$ is weakly malnormal in $A_\G$ by Theorem~\ref{thm: ah splitting IP edge}, so this is now a direct consequence of Proposition~\ref{prop: from wm edge group to every wm parabolic}.
\end{proof}

In particular, we get the following:

\begin{corollary}
	Even FC-type Artin groups satisfy the Weak Malnormality Conjecture. 
\end{corollary}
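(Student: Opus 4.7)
The plan is to combine the reduction lemma with the previously established pieces: the Intersection Conjecture for even FC-type Artin groups, the spherical-type case of the Weak Malnormality Conjecture, and \Cref{lem:ThmA implies WM}.

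First I would reduce to the irreducible case using \Cref{lem: conjecture red from irred}. Since a direct product of even FC-type Artin groups decomposes into irreducible even FC-type factors, it suffices to prove the statement when $A_\Gamma$ is itself irreducible and even FC-type.

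Next I would split into two cases depending on whether $\Gamma$ is a clique. If $\Gamma$ is a clique, then by the definition of FC-type the associated Coxeter group $W_\Gamma$ is finite, so $A_\Gamma$ is an irreducible spherical-type Artin group, and \Cref{lem:spherical type} immediately gives the Weak Malnormality Conjecture. If $\Gamma$ is not a clique, then $A_\Gamma$ admits a visual splitting $A_\Gamma = A_{\Gamma_1} *_{A_\Omega} A_{\Gamma_2}$. Even FC-type Artin groups satisfy the Intersection Conjecture by Antolín--Foniqi \cite{antolin_subgroups_2023}, so in particular the intersection of any two conjugates of $A_\Omega$ is again a parabolic subgroup of $A_\Gamma$. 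This is precisely the hypothesis of \Cref{thm: ah splitting IP edge}, so \Cref{lem:ThmA implies WM} applies and yields the Weak Malnormality Conjecture for $A_\Gamma$.

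There is essentially no hard step here, since every ingredient has been set up earlier in the paper; the only thing to verify carefully is that the clique case is genuinely covered by the spherical-type lemma (which it is by the FC-type hypothesis), so that no case is lost when one passes from ``visual splitting exists'' to ``visual splitting does not exist''. In particular, nothing about the even hypothesis is needed beyond its role in making the Intersection Conjecture available via \cite{antolin_subgroups_2023}.
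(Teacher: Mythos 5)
Your proposal is correct and follows essentially the same route as the paper: reduce to the irreducible case via Lemma~\ref{lem: conjecture red from irred}, handle the clique case with Lemma~\ref{lem:spherical type} (using that FC-type forces cliques to be spherical), and otherwise invoke the Intersection Conjecture from \cite{antolin_subgroups_2023} to verify the hypothesis of Theorem~\ref{thm: ah splitting IP edge} and conclude via Lemma~\ref{lem:ThmA implies WM}. The paper additionally remarks that the statement also follows directly from Corollary~\ref{cor: conj implications}, but your argument matches its primary proof.
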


\begin{proof}
	By Lemma~\ref{lem: conjecture red from irred}, it is enough to assume that $A_\Gamma$ is irreducible and by Lemma  \ref{lem:spherical type} we may assume that it is not of spherical-type, that is, $\G$ is not a clique. The result now follows from Lemma~\ref{lem:ThmA implies WM}.
\end{proof}	

Note that the previous corollary is also a direct consequence of Corollary~\ref{cor: conj implications}.

\paragraph{Two-dimensional Artin Groups.}

An Artin group $A_\G$ is two-dimensional if $\G$ has at least one edge (i.e., $A_\G$ is not a free group) and any three vertices in $\G$ generate an infinite-type parabolic subgroup.  Recall that the Intersection Conjecture has not yet been proved for two-dimensional Artin groups, with currently the largest subclass for which it has been proved being the class of two-dimensional Artin groups whose presentation graph does not contain two adjacent edges with label $2$ \cite{Blufstein2022}. 

In this section we introduce another strategy for proving the Weak Malnormality Conjecture using an action of $A_\G$ on the Deligne complex, which we can apply to two-dimensional Artin groups.

\begin{proposition}\label{lem:malnormal-Deligne}
	Let $A_\Gamma$ be an Artin group such that:
	\begin{itemize}
		\item $D_\Gamma$ is CAT(0) with respect to either the cubical metric or the Moussong metric.
		\item There exists a vertex $v$ of $D_\Gamma$  with unbounded link and such that $\mbox{Stab}(v)$ is weakly malnormal in $A_\Gamma$.
	\end{itemize}
Then $A_\Gamma$ satisfies the Weak Malnormality Conjecture.
\end{proposition}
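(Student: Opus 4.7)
By Lemma~\ref{lem: conjecture red from irred}, we may assume that $A_\G$ is irreducible, so every proper standard parabolic subgroup $A_{\G'}$ automatically contains no direct factor of $A_\G$. We must produce $g \in A_\G$ with $A_{\G'} \cap A_{\G'}^g$ finite. Setting $K := \mathrm{Stab}(v)$, the weak malnormality hypothesis supplies some $h \in A_\G$ with $K \cap hKh^{-1}$ finite, so the unique CAT(0) geodesic $\gamma_0$ from $v$ to $hv$ in $D_\G$ has finite pointwise stabiliser; iterating produces arbitrarily long concatenated geodesics along $v, hv, h^2v, \ldots$ with finite pointwise stabiliser. This will be our ``building block'' inside $D_\G$.

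The argument then proceeds by a case analysis on the action of $A_{\G'}$ on $D_\G$. In the \emph{elliptic case}, where $A_{\G'}$ fixes some vertex $p$, we have $A_{\G'} \subseteq \mathrm{Stab}(p)$, a spherical-type parabolic; by Blufstein--Paris \cite{blufstein_parabolic_2023}, $A_{\G'}$ is itself a parabolic subgroup of the spherical Artin group $\mathrm{Stab}(p)$, so the Weak Malnormality Conjecture for spherical-type groups (Lemma~\ref{lem:spherical type}) applied inside $\mathrm{Stab}(p)$ produces the required $g$, the subcase where $A_{\G'}$ contains a direct factor of $\mathrm{Stab}(p)$ being excluded by irreducibility of $A_\G$ together with Corollary~\ref{cor: normal iff product}. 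In the \emph{non-elliptic case}, $A_{\G'}$ fixes no vertex of $D_\G$; since the action is cellular and without inversions, $A_{\G'}$ must have unbounded orbits and hence contain a loxodromic element $a$ with axis $L_a \subset D_\G$. Here the unbounded link at $v$ does its work: combined with cocompactness, it ensures that the $A_\G$-orbit of $\gamma_0$ accumulates on $L_a$ in many directions, so one can choose $g \in A_\G$ such that the (convex) fixed sets of $A_{\G'}$ and $A_{\G'}^g$ are joined by a geodesic containing a long translate of $\gamma_0$. By CAT(0) convexity of fixed sets, $A_{\G'} \cap A_{\G'}^g$ pointwise stabilises this geodesic, hence fixes a translate of $\gamma_0$, and is therefore finite.

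\textbf{Main obstacle.} The crux is the non-elliptic case, where one must precisely aim translates of $\gamma_0$ so that their pointwise stabiliser witnesses the smallness of $A_{\G'} \cap A_{\G'}^g$. The unbounded link of $v$ should supply enough directions at $v$ to achieve the required transversality across $L_a$, but a rigorous execution will demand a careful CAT(0) projection argument onto $L_a$ (or onto the minset of $A_{\G'}$), combining cocompactness of the $A_\G$-action with the combinatorics of the link of $v$; a potential subtlety is that one may need to subdivide further according to the dynamical type of the $A_{\G'}$-action (loxodromic versus CAT(0)-parabolic).
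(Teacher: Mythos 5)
Your strategy diverges from the paper's in a way that creates two genuine gaps. First, the elliptic case: you claim the subcase where $A_{\Gamma'}$ contains a direct factor of $\mathrm{Stab}(p)$ is excluded by irreducibility of $A_\Gamma$ together with Corollary~\ref{cor: normal iff product}. This is false --- irreducibility of $A_\Gamma$ says nothing about reducibility of its spherical parabolics. For instance, in an irreducible two-dimensional Artin group with an edge $\{a,b\}$ labelled $2$, the parabolic $\langle a\rangle$ is a direct factor of the spherical parabolic $\langle a,b\rangle=\langle a\rangle\times\langle b\rangle$, hence is \emph{not} weakly malnormal in $\langle a,b\rangle$; and it equals the full stabiliser of the vertex $\langle a\rangle$, so no choice of fixed vertex $p$ lets you conclude via Lemma~\ref{lem:spherical type} applied inside $\mathrm{Stab}(p)$. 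A conjugator witnessing weak malnormality of $\langle a\rangle$ in $A_\Gamma$ cannot be found inside any single vertex stabiliser, so the global geometry is unavoidable even in the elliptic case. Second, in the non-elliptic case you invoke ``the (convex) fixed sets of $A_{\Gamma'}$ and $A_{\Gamma'}^g$'', which are empty precisely in that case, and you concede that the projection argument is not carried out; you would also need to handle the possibility that $A_{\Gamma'}$ contains no loxodromic element at all.

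The missing idea, which the paper uses and which dissolves your case dichotomy entirely, is that for any proper parabolic $A_{\Gamma'}$ the Deligne complex $D_{\Gamma'}$ embeds equivariantly as a convex $A_{\Gamma'}$-invariant subcomplex of $D_\Gamma$ (for the Moussong metric this is Lemma 5.1 of \cite{CharneyLocallyReducible}). One then passes to a conjugate so that $v\notin kD_{\Gamma'}$, observes that convexity forces the projection of $kD_{\Gamma'}$ to $lk(v)$ to have diameter at most $\pi$, and uses unboundedness of $lk(v)$ together with cocompactness of the $\mathrm{Stab}(v)$-action on it to rotate the finite-stabiliser geodesic $\gamma=[v,w]$ so that it lies, with angles at least $\pi$ at each junction, on the unique geodesic realising the distance between $kD_{\Gamma'}$ and a translate $gkD_{\Gamma'}$. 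The intersection of the two conjugates must then fix $h\gamma$ pointwise and is therefore finite. Your ``building block'' $\gamma_0$ is the right object (though note that concatenating $[v,hv]$ with $[hv,h^2v]$ is not automatically geodesic --- the angle condition at the junction is exactly what the unbounded link must supply), but without the convex invariant subcomplexes and the $\pi$-angle concatenation at the links of $v$ and $hw$ there is no mechanism forcing $A_{\Gamma'}\cap A_{\Gamma'}^g$ to fix a translate of $\gamma$.
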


\begin{corollary} Two-dimensional Artin groups satisfy the Weak Malnormality Conjecture.
\end{corollary}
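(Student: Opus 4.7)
The plan is to apply Proposition~\ref{lem:malnormal-Deligne} after a preliminary reduction. By Lemma~\ref{lem: conjecture red from irred}, I may assume $A_\Gamma$ is irreducible. If $A_\Gamma$ is itself of spherical type---which, for irreducible two-dimensional Artin groups, forces $A_\Gamma$ to be dihedral---Lemma~\ref{lem:spherical type} applies directly. Thus I reduce to the case where $A_\Gamma$ is irreducible and of infinite type, so that $\Gamma$ has at least one edge $st$. The first hypothesis of Proposition~\ref{lem:malnormal-Deligne} is automatic, since the Moussong metric on $D_\Gamma$ is CAT(0) for every two-dimensional Artin group by Charney--Davis~\cite{CharneyDavis1995}.

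For the second hypothesis I would take $v$ to be the vertex of $D_\Gamma$ whose stabiliser is the dihedral parabolic $A_{st}$. The link at $v$ should be unbounded because infinitely many Moussong 2-cells meet at $v$---one for each element of the infinite group $A_{st}$---and the resulting spherical link is an infinite bipartite complex on which $A_{st}$ acts with unbounded orbits on the coset sets $A_{st}/A_s$ and $A_{st}/A_t$. To show $A_{st}$ is weakly malnormal in $A_\Gamma$, my strategy mirrors Claim~2 in the proof of Theorem~\ref{thm: ah splitting IP edge}, adapted from Bass--Serre trees to the Deligne complex. Given any $g \in A_\Gamma$ with $gv \neq v$, I consider the CAT(0) geodesic $\gamma$ from $v$ to $gv$; because the action of $A_\Gamma$ on $D_\Gamma$ is without inversions, the pointwise stabiliser of $\gamma$ coincides with the pointwise stabiliser of the combinatorial path of cells that $\gamma$ crosses, exactly as in the proof of Proposition~\ref{lem: IP via Deligne}.

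By iterated application of the intersection property for spherical-type parabolics in two-dimensional Artin groups---a direct consequence of Proposition~\ref{lem: IP via Deligne} applied to the Moussong $D_\Gamma$---this pointwise stabiliser is a parabolic subgroup of $A_\Gamma$. I then iteratively extend $\gamma$, using an adapted version of Lemma~\ref{Lem: no proper normal subgroups} together with the irreducibility of $A_\Gamma$ to find translates producing strictly smaller parabolic stabilisers, and bound the resulting decreasing chain of parabolic subgroups via Lemma~\ref{lem: bounded chain parabolics}. After finitely many steps the pointwise stabiliser must become trivial, yielding a translate $g$ with $A_{st} \cap gA_{st}g^{-1} = \{1\}$. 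The hard part will be verifying the unbounded-link hypothesis rigorously, as the diameter of the link depends on delicate Moussong-angle computations; as a fallback, if this vertex does not work, one can instead choose $v$ to have cyclic stabiliser $\langle s \rangle$ and combine the same geodesic-extension strategy with the centraliser argument from the dihedral case of Lemma~\ref{lem:spherical type}.
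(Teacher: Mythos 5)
Your overall architecture (reduce to the irreducible case, then feed a weakly malnormal vertex with unbounded link into Proposition~\ref{lem:malnormal-Deligne}) is the same as the paper's, but there are two genuine gaps, both in the step you yourself flag as ``the hard part''. First, the choice of vertex: you cannot take $v$ to be the vertex stabilised by $A_{st}$ for an \emph{arbitrary} edge $st$. If $m_{st}=2$, then $A_{st}=\langle s\rangle\times\langle t\rangle$, the Coxeter cell is a square, and the link of $v$ in the Moussong metric is the complete bipartite graph on $A_{st}/A_s \sqcup A_{st}/A_t$ with every edge of length $\pi/2$; this link is infinite but has diameter $\pi$, so it is \emph{bounded} and the second hypothesis of Proposition~\ref{lem:malnormal-Deligne} fails. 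Your justification --- that infinitely many $2$-cells meet at $v$ and the group acts with unbounded orbits on the vertex set of the link --- does not imply metric unboundedness of the link. This is why the paper restricts to an edge with label $>2$ (where unboundedness of the link is Proposition~E of \cite{Vaskou2022}) and disposes of the remaining case, where every label is $2$, by observing that $A_\Gamma$ is then a RAAG, hence even FC-type, and invoking the corresponding corollary.

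Second, the weak malnormality of the dihedral parabolic is itself the hard content, and the paper does not reprove it: it cites Lemma~5.7 of \cite{Vaskou2022}, which supplies an edge $ab$ with label $>2$ such that $A_{a,b}$ is weakly malnormal. Your plan to transplant Claim~2 of Theorem~\ref{thm: ah splitting IP edge} from the Bass--Serre tree to the Deligne complex does not go through as described. In a tree, the union $\gamma\cup g\gamma$ of a geodesic with a translate sharing an endpoint is automatically geodesic provided $g$ moves the last edge; in a CAT(0) polyhedral complex the concatenation is geodesic only if the Alexandrov angle at the common vertex is at least $\pi$, and arranging this requires precisely the link-diameter estimates you are deferring (compare the $\pi$-separation arguments in the proof of Proposition~\ref{lem:malnormal-Deligne}). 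Moreover, the normality dichotomy of Claim~1 does not transfer: for an edge of $D_\Gamma$ joining $A_s$ to $A_{st}$ with $m_{st}=2$, the edge stabiliser $\langle s\rangle$ \emph{is} normal in the vertex stabiliser $A_{st}$, so the extension procedure can stall without shrinking the stabiliser. Your fallback vertex with stabiliser $\langle s\rangle$ inherits both problems: neither the unboundedness of its link nor the weak malnormality of $\langle s\rangle$ in $A_\Gamma$ is addressed. To repair the proof, replace the ``any edge'' choice by an edge with label $>2$, cite Vaskou for both the weak malnormality of $A_{a,b}$ and the unboundedness of its link, and treat the all-labels-$2$ case separately.
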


\begin{proof} First suppose $A_\Gamma$ contains an edge $e$ labelled $k >2$.  Then it cannot be reducible, since $e$ together with any vertex in the opposite direct factor would generate a spherical-type subgroup of rank 3. The Moussong metric on the Deligne complex $D_\Gamma$ is CAT(0), by Charney-Davis \cite{CharneyDavis1995}.  By Lemma 5.7 of Vaskou \cite{Vaskou2022}, there exists vertices $a,b$ in $\Gamma$ connected by an edge labelled $>2$ such that the subgroup $A_{a,b}$ is weakly malnormal in $A_\Gamma$. Viewing
	$A_{a,b}$ as a vertex in $D_\Gamma$, it has unbounded link by Proposition~E of~\cite{Vaskou2022}. Thus, we can apply Proposition \ref{lem:malnormal-Deligne} to conclude that every proper parabolic subgroup in $A_\Gamma$ is weakly malnormal. 
	
	If all edges of $\Gamma$ are labelled $2$ then $A_\Gamma$ is a RAAG, hence even FC-type, so the result follows from Theorem~\ref{thm: even FC-type}.
\end{proof}

\begin{proof}[Proof of Proposition~\ref{lem:malnormal-Deligne}]
For a proper parabolic subgroup $A_{\Gamma'}$, the Deligne  complex $D_{\Gamma'}$ embeds equivariantly as a strict convex subcomplex of the CAT(0) space $D_\Gamma$ that is stabilised by $A_{\Gamma'}$. (This is easily verified for the cubical metric.  For the Mosussong metric, see Lemma 5.1 of \cite{CharneyLocallyReducible}.)
We want to construct a translate $gD_{\Gamma'}$ such that the following is satisfied: 
\begin{itemize}
	\item There is a unique geodesic realising the distance between $D_{\Gamma'}$ and $gD_{\Gamma'}$.
	\item The pointwise stabiliser of that geodesic is finite.
\end{itemize}
This will imply that $A_{\Gamma'} \cap gA_{\Gamma'}g^{-1}$ is trivial, hence $A_{\Gamma'}$ is weakly malnormal.

Since checking that $A_{\Gamma'}$ is weakly malnormal is equivalent to checking that any of its conjugates is weakly malnormal, we will consider instead a translate $kD_{\Gamma'}$ for some $k \in A_{\Gamma}$, such that the vertex $v$ from the Proposition's statement is not contained in $kD_{\Gamma'}$.  We first observe that 
the projection of $kD_{\Gamma'}$ onto the link, $lk(v)$, has diameter at most $\pi$.  To see this, let $x,y$ be two points in  $kD_{\Gamma'}$ and let $\alpha_x, \alpha_y$ be the geodesics connecting $x$ to $v$ and $y$ to $v$.  If the angle between $\alpha_x$ and $\alpha_y$ was $\geq \pi$, a standard argument of CAT(0) geometry would imply that  the concatenation of $\alpha_x$ and $\alpha_y$ is  a geodesic from $x$ to $y$.  Since $kD_{\Gamma'}$ is convex in $D_\Gamma$, this geodesic must lie entirely in $kD_{\Gamma'}$.  This contradicts our assumption that $v \notin kD_{\Gamma'}$.

Since $\mbox{Stab}(v)$ is weakly malnormal, there exists a translate $w$ of $v$ such that $\mbox{Stab}(v) \cap \mbox{Stab}(w)$  is finite, and hence the geodesic $\gamma=[v,w]$  connecting them has finite pointwise stabiliser.  Since the link of $v$ is unbounded and $\mbox{Stab}(v)$ acts cocompactly on it, we can pick an element $h \in \mbox{Stab}(v)$ such that the distance in $lk(v)$ between the projection of $kD_{\Gamma'}$ and $h\gamma$ is at least $\pi$.  The stabiliser of $h\gamma$ is conjugate to that of $\gamma$, hence it is also finite.  Next, since the link of $hw$ is also unbounded, we can pick an element $g \in \mbox{Stab}(hw)$ such that the distance in $lk(hw)$ between $h\gamma$ and $gh\gamma$ is at least $\pi$.
And finally,  since the distance in $lk(v)$ between the projection of $kD_{\Gamma'}$ and $h\gamma$ is at least $\pi$ the same holds for the distance in $lk(gv)$  the between the projection of $gkD_{\Gamma'}$ and $gh\gamma$. (See Figure~1).

\medskip

\begin{figure}[H]\label{figure2}
	\begin{center}
		\begin{overpic}[width=1\textwidth]{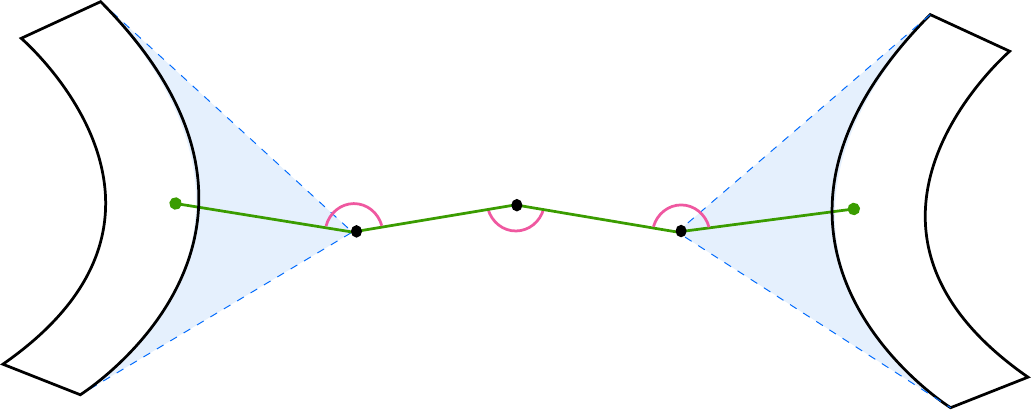}
			\put(14, 21){$x$}
			\put(34, 13){$v$}
			\put(32, 21){{\color{VioletRed}$> \pi$}}
			\put(48, 22){$hw$}
			\put(47, 14){{\color{VioletRed}$> \pi$}}
			\put(64, 13){$gv$}
			\put(63, 21){{\color{VioletRed}$> \pi$}}
			\put(84, 21){$y$}
				\put(41, 20){{\color{OliveGreen}$h\gamma$}}
				\put(56, 20){{\color{OliveGreen}$gh\gamma$}}
			\put(-2, 40){$k D_{\Gamma'}$}
			\put(95, 39){$gk D_{\Gamma'}$}
		\end{overpic}
	\end{center}
	\caption{A geodesic (green) between $kD_{\Gamma'}$ and $gkD_{\Gamma'}$, obtained by concatenating several geodesic segments making an angle greater than $\pi$ at their intersection point. The angles at $v$ between any two points of $kD_{\Gamma'}$ are  smaller than $\pi$ (``visual cone" in light blue).}
\end{figure}

It follows that for any points $x \in kD_{\Gamma'}$ and $y \in gkD_{\Gamma'}$, the concatenation of the geodesics $[x,v], [v,hw], [hw,gv], [gv,y]$ is the (unique) geodesic from $x$ to $y$.  In particular, taking $x$ to be the nearest point projection of $v$ on $kD_{\Gamma'}$ and $y$ to be the nearest point projection of $gv$ on $gkD_{\Gamma'}$, we obtain a unique length-minimizing path between $kD_{\Gamma'}$ and~$gkD_{\Gamma'}$.  Since $kA_{\Gamma'}k^{-1} \cap gkA_{\Gamma'}(gk)^{-1}$ preserves both of these subcomplexes, it must fix this path.  In particular, it lies in the pointwise stabilser of $h\gamma$.  We conclude that this intersection is finite and hence $A_{\Gamma'}$  is weakly malnormal.
\end{proof}

\bibliographystyle{alpha}
\bibliography{acy-bibliography}

\end{document}